\newtheorem{theorem}{Theorem}[section]
\newtheorem{lemma}[theorem]{Lemma}
\newtheorem{cor}[theorem]{Corollary}
\newtheorem{definition}[theorem]{Definition}
\newtheorem{con}{Conjecture}
\newtheorem{exm}{Example}
\newtheorem{prm}[theorem]{Problem}
\newtheorem{oprm}{Open Problem}
\newtheorem{rem}[theorem]{Remark}
\newtheorem{note}[theorem]{Note}
\title[A study on Type-2 isomorphic circulant graphs and related groups]{A study on Type-2 isomorphic circulant graphs and related Abelian groups}
\author{\sc Vilfred Kamalappan} 
\address{Department of Mathematics, Central University of Kerala, Periye, Kasaragod, Kerala, India - 671 316.}
\email{vilfredkamal@gmail.com}
\subjclass[2010]{05C60, 05C25, 05C75.}
\keywords{Circulant graph, Cayley Isomorphism (CI) property, Type-1 isomorphism, Type-2 isomorphism, Type-1 group of $C_{n}(R)$, Type-2 group of $C_{n}(R)$ w.r.t. $r$, $(V_{n,r}(C_n(R)), ~\circ)$.}
\date{}
\begin{document}

\begin{abstract} Circulant graphs $C_n(R)$ and $C_n(S)$ are said to be \emph{Adam's isomorphic} if there exist some $a\in \mathbb{Z}_n^*$ such that $S = a R$ under arithmetic reflexive modulo $n$. In 1970, Elspas and Turner \cite{eltu} raised a question on the isomorphism of $C_{16}(1, 3, 7)$ and $C_{16}(2, 3, 5)$ and Vilfred \cite{v96} gave its answer by defining Type-2 isomorphism, different from Adam's isomorphism or Type-1 isomorphism, of $C_n(R)$ w.r.t. $m$ where $m > 1$ is a divisor of $\gcd(n, r)$ and $r\in R$. This paper is an extensive study on Type-2 isomorphic circulant graphs. Vilfred and Wilson \cite{vw0A} obtain isomorphic circulant graphs $C_{np^3}(R)$ of Type-2 w.r.t. $m$ = $p$, and related Abelian groups where $p$ is a prime number and $n\in\mathbb{N}$. Using Theorem \ref{c13}, a list of $T2_{np^3,p}(C_{np^3}(R^{np^3,x+yp}_i))$ =  $\{C_{np^3}(R^{np^3,x+yp}_{j}) : j = 1,2,...,p\}$ for $p$ = 3,5,7,11 and $n$ = 1 to 5 and also for $p$ = 13 and $n$ = 1 to 3 are given in the Annexure where $(T2_{np^3,p}(C_{np^3}(R^{np^3,x+yp}_i)), \circ)$ is an abelian group on the $p$ isomorphic circulant graphs $C_{np^3}(R^{np^3,x+yp}_i)$ of Type-2 w.r.t. $m$ = $p$, $1 \leq i,j \leq p$, $1 \leq x \leq p-1$, $y\in\mathbb{N}_0$, $0 \leq y \leq np - 1$, $1 \leq x+yp \leq np^2-1$, $p,np^3-p\in R^{np^3,x+yp}_i$ and $i,j,n,x\in\mathbb{N}$. We also show existence of isomorphic circulant graphs $C_n(R)$ and $C_n(S)$ which are neither Type-1 nor Type-2 w.r.t. any particular $m$. We use VB program to develope this theory
and for illustration of examples.
\end{abstract}

\maketitle

	
\section{Introduction}

\subsection{Historical Note}

In 1846 Catalan (cf. \cite{da79}) introduced circulant matrices and thereafter many authors studied different properties of circulant graphs \cite{ad67}-\cite{frs}, \cite{hz14}-\cite{vw3}. Adam \cite{ad67} considered conditions for isomorphism of circulant graphs;  Boesch and Tindell \cite{bt} studied connectivity of circulant graphs; Sachs \cite{sa62} studied self-complementary circulant graphs and a conjecture stated by him on the existence of self-complementary circulant graphs was settled in \cite{amv, frs} (Also, see Theorem 3.7.5, pages 55, 56 in \cite{v96}.); Vilfred \cite{v13} developed a theory of Cartesian product and factorization of circulant graphs similar to that of natural numbers; a question on the isomorphism of $C_{16}(1,3,7)$ and $C_{16}(2,3,5)$ was raised by Elspas and Turner in \cite{eltu} and Vilfred \cite{v20} gave its answer by defining Type-2 isomorphism; Li, Morris, Muzychuk, Palfy and Toida studied automorphism of circulant graphs \cite{li02}-\cite{mu97}, \cite{to77}. For further studies on circulant graphs, one can refer the book on circulant matrices by Davis \cite{da79} and the survey article by Kra and Simanca \cite{krsi}. 

Vilfred  \cite{v96, v17, v20} defined and studied circulant graph isomorphism of Type-2, a new type of isomorphism different from Adam's  isomorphism or Type-1 isomorphism, of $C_n(R)$ w.r.t. $r$ $\ni$ $\gcd(n, r)$ = $ m >  1$, $r\in R$ and $r,n\in\mathbb{N}$. Type-2 isomorphic circulant graphs don't have  CI-property. Such graphs of order $n$ w.r.t.  $r$ $\ni$ $\gcd(n, r)$ = 2,3,5,7 are obtained by the authors in \cite{v20}, \cite{vw1} - \cite{vw3}, $n\in\mathbb{N}$. Vilfred \cite{v20} extends the definition of Type-2 isomorphism of circulant graphs $C_n(R)$ w.r.t. $m$ by considering $m > 1$ is a divisor of $\gcd(n, r)$ and $r\in R$. This paper is an extensive study on Type-2 isomorphic circulant graphs. We obtain isomorphic circulant graphs $C_{np^3}(R)$ of Type-2 w.r.t. $m$ = $p$, and abelian groups related to these isomorphic circulant graphs where $p$ is a prime number, $r\in R$, $\gcd(n, r)$ = $m$ and $n\in\mathbb{N}$.

Investigation of symmetries as well as asymmetries of structures yields powerful results in Mathematics. Circulant graphs form a class of highly symmetric mathematical (graphical) structures and is one of the most studied graph classes. Circulant graphs have wide range of applications in coding theory, Ramsey theory, VLSI design, interconnection networks in parallel and distributed computing \cite{hz14}, the modeling of data connection networks and the theory of designs and error-correcting codes \cite{h03,st02}. 

\subsection{Preliminaries}

Let $n$ be a positive integer, and $\mathbb{Z}_n$ the additive group of residues modulo $n$. For a subset $S$ of $\mathbb{Z}_n$ one can define the circulant graph $C_n(S)$ as the graph with vertex set $\mathbb{Z}_n$ and edge set $\{\{x, y\} | x,y\in \mathbb{Z}_n,$ $x-y\in S\}$. For an integer $k$ coprime to $n$, there is a very natural isomorphism between circulants $C_n(S)$ and $C_n(kS)$  namely the map $\varphi_k : x \mapsto kx$ (observe that $\varphi_k$ is an automorphism of the group $\mathbb{Z}_n$) where $kS$ = $\{ks : s\in S\}$. See Lemma \ref{a4}. In this case, we say that $C_n(S)$ and $C_n(kS)$ are \emph{Adam's isomorphic} and we call the isomorphism as \emph{Type-1 isomorphism} \cite{v17}.  

The adjacency matrix $A(G)$ of a circulant graph $G$ is circulant. Thus, if $[a_1,a_2,\ldots,$ $a_n]$ is the first row of the adjacency matrix $A(G)$ of a circulant graph $G$, then $a_1$ = $0$ and for $2 \leq i \leq n$, $a_i$ = $a_{n-i+2}$ \cite{da79}. Through-out this paper, $C_n(R)$ denotes circulant graph $C_n(r_1, r_2, . . . , r_k)$ for a set $R$ = $\{ r_1, r_2, ~. ~.~ .~ , r_k \}$ where $1 \leq$ $r_1 < r_2 < . . . <$ $r_k \leq [\frac{n}{2}]$.  And we consider only connected circulant graphs of finite order, $V(C_n(R))$ = $\{v_0, v_1, v_2, . . . , v_{n-1}\}$ with $v_{i+r}$ adjacent to $v_{i}$ for each $r \in R$ and $0 \leq$ $i \leq n-1$, subscript addition taken modulo $n$, $K_n = C_n(1,2,\ldots,n-1)$ and all cycles have length at least $3,$ unless otherwise specified. However when $\frac{n}{2} \in R,$ edge $v_iv_{i+\frac{n}{2}}$ is taken as a single edge for considering the degree of the vertex $v_i$ or $v_{i+\frac{n}{2}}$ and as a double edge while counting the number of edges or cycles in $C_n(R),$ $0 \leq i \leq n-1.$ 

We generally write $C_n$ for $C_n(1).$ We will often assume that the vertices of circulant graph $C_n(R)$, with-out further comment, are the corners of a regular $n$-polygon, labeled clockwise. In $C_n(R)$ when $r\in R$, it is understood that $r\in\mathbb{Z}_{\frac{n}{2}}$. Vertex $v_i$ in each figure is considered with label $i$, $v_i\in V(C_n(R))$ and $i\in\mathbb{Z}_n$. Isomorphic circulant graphs $C_{54}(2,3,16,20)$, $C_{54}(3,4,14,22)$ and $C_{54}(3,8,10,26)$ which are of Type-2 isomorphic w.r.t.  $m$ = 3 and are shown in Figures 1, 2 and 3. See Example \ref{e1} in Section 7 for their Type-2 isomorphism. 
\begin{figure}[ht]
	\centerline{\includegraphics[width=6.3in]{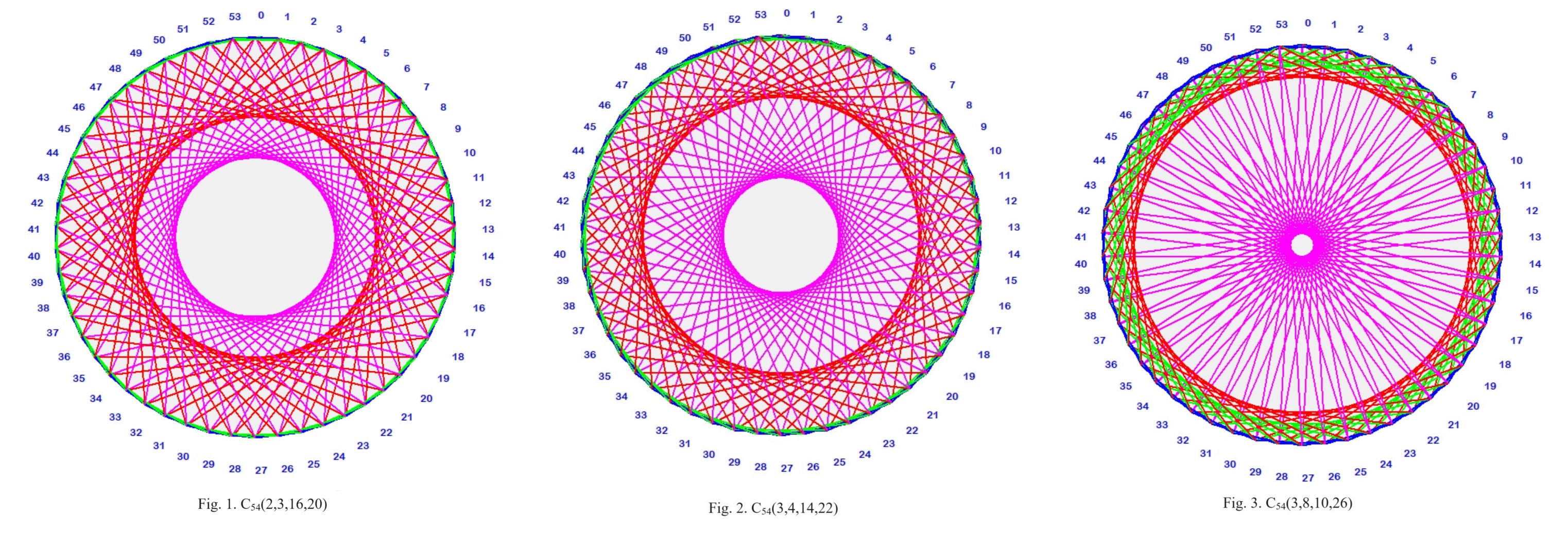}}
\end{figure}
Many symbols are used in this paper and we present important symbols in the following Table 1 to make it easier to the readers. 

\begin{table}\label{t1}
	\caption{List of important symbols used in this paper}
	\begin{center}
		\scalebox{.9}{
	\begin{tabular}{||c||*{4}{c||}}\hline \hline 
				S. No. &  Symbol & Meaning 
				\\ \hline \hline 
				& & \\
				
	1. & $C_n(1)$ & = $C_n$ \\
	2. & $C_n(1,2,...,[n/2])$ & = $K_n$ \\
				
	3. & $C_n(R)$ & Circulant graph of order $n$ with jump sizes \\
		    & & $r_1,r_2,...,r_k$, $R = \{r_1,r_2,...,r_k\} \subseteq \mathbb{Z}_{\frac{n}{2}}$ \\
			& & and $1 \leq r_1 < r_2 < ... < r_k \leq [n/2]$. \\
				
	4. & $\mathbb{Z}_n$ & $= \{0,1,2,...,n-1\}$ \\
				
	5. & $\mathbb{Z}^*_n$ & $ = \{1,2,...,n-1\}$ = $\mathbb{Z}_n \setminus \{0\}$\\
				& & \\
				
	6. & $\varphi_n$ & = $\{ x \in \mathbb{Z}_n : \gcd(n, x) = 1 \}$.\\
				
	7. & $\varphi_{n, x}:$ $S$ $\rightarrow$ $\mathbb{Z}_n :$ &   $\varphi_{n,x}(s)$ = $xs$, $\forall$ $s \in S$, $S \subseteq \mathbb{Z}_n$,  $S \neq \emptyset$ and $x \in \varphi_{n}$.\\ 
				& & \\
				
	8. & $\varphi_{n,x}(C_n(R))$  = $C_n(\varphi_{n,x}(R))$ &  = Adam's isomorphism of $C_n(R)$ w.r.t. $x$ \\ 
				& & =  $C_n(xR)$ where $x\in\varphi_n$ and $\varphi_{n,x}(R)$ in  \\
				& & $C_n(\varphi_{n,x}(R))$ is calculated  under \\
				& &  reflexive modulo $n$. \\
				
	9. & $Ad_n$ & $= \{\varphi_{n,x}/~ x\in\varphi_n\}$. \\
				
	10. & $\varphi_{n,x} \circ \varphi_{n,y}$ & = $\varphi_{n,xy}$, $x,y\in\varphi_n$. \\	
			& & \\
				
	11. & $Ad_n(C_n(R))$ & $= \{\varphi_{n,x}(C_n(R)) = C_n(xR)/ x\in\varphi_n\}$. \\
							
	12. & $(\varphi_{n,x} \circ \varphi_{n,y})(C_n(R))$ & = $\varphi_{n,xy}(C_n(R))$ = $C_n((xy)R)$ \\	
			& & \hfill = $\varphi_{n,x}(C_n(R)) \circ \varphi_{n,y}(C_n(R))$, $x,y\in\varphi_n$. \\	
				
	13. & $C_n(xR) \circ C_n(yR)$ & = $C_n((xy)R)$ = $\varphi_{n,xy}(C_n(R))$, $x,y\in\varphi_n$. \\	
			& & \\
				
    14. & $(Ad_n(C_n(R)),~\circ)$ & $= (T1_n(C_n(R)),~\circ)$ = Adam's group or \\ 
          & &   Type-1 group of $C_n(R)$. \\

	15. & $\theta_{n,m,t}:$ $\mathbb{Z}_n$ $\rightarrow$ $\mathbb{Z}_n$ $\ni$ &   $\theta_{n,m,t}(x) =  x+jtm$ where $x = qm+j$,   \\
			& & $m > 1$ is a divisor of $n$, $0 \leq j \leq m-1$,  \\
		& &  $m,x\in\mathbb{Z}_n$ and  $0 \leq q,t \leq \frac{n}{m}-1$. \\
				
	16. & $\theta_{n,m,t} \circ \theta_{n,m,t'}$ & $= \theta_{n,m,t+t'}$ where $0 \leq t,t' \leq \frac{n}{m}-1$, $m > 1$ is a divisor \\
			& &  of $n$ and $t+t'$ is calculated under arithmetic $mod~\frac{n}{m}$. \\
				
	17. & $\theta_{n,m,t}:$ $V(C_n(R))$ $\rightarrow$ $V(K_n)$ $\ni$ & $\theta_{n,m,t}(v_x) = u_{x+jtm}$, $\forall$ $v_x\in V(C_n(R))$, \\
			& & $\theta_{n,m,t}((v_x, v_y))$ = $ (\theta_{n,m,t}(v_x), \theta_{n,r,t}(v_y))$ and \\ 
			&  & $\theta_{n,m,t}(C_n(R)) = C_n(\theta_{n,m,t}(R))$ where $\theta_{n,m,t}(R)$ in \\
			& &   $C_n(\theta_{n,m,t}(R))$  is calculated under reflexive modulo $n$, \\
			& &  $x = qm+j$, $m > 1$ is a divisor of  $\gcd(n,r)$, $r\in R$,  \\
			& &  $V(C_n(R))$ = $\{v_0, v_1, v_2, . . . , v_{n-1}\}$, \\ 
			& & $V(K_n)$ =  $\{u_0, u_1, u_2, . . . , u_{n-1}\}$, $(v_x, v_y)\in E(C_n(R))$, \\
			& & \hfill $0 \leq q,t \leq \frac{n}{m}-1$ and $0 \leq j \leq m-1$. \\
			& & \\
				
	18. & $V_{n,r}$ & $= \{\theta_{n,m,t}/ t = 0,1,2,...,\frac{n}{m}-1\}$, $m > 1$ is a divisor of $n$. \\
				
	19. & $V_{n,r}(s)$ & $= \{\theta_{n,m,t}(s)/ t = 0,1,2,...,\frac{n}{m}-1\}$, $r,s\in\mathbb{Z}_n$ and \\
			& & \hfill $m > 1$ is a divisor of  $n$.  \\
				
	20. & $V_{n,r}(C_n(R))$ & $= \{\theta_{n,m,t}(C_n(R))/ t = 0,1,2,...,\frac{n}{m}-1\}$, \\
			& & \hfill $m > 1$ is a divisor of  $\gcd(n,r)$ and $r\in R$. \\
				
	21. &  $C_n(R)$ and $C_n(S)$ are Type-2 &  if $\theta_{n,m,t}(C_n(R)) = C_n(S)$ for some $t$, \\
			& isomorphic w.r.t. $m$ &   $m > 1$ is a divisor of  $\gcd(n,r)$, $r\in R$ and \\
				
			& &   $C_n(S) \neq C_n(xR)$, $\forall$ $x\in\varphi_n$, $1 \leq t \leq \frac{n}{m}-1$. \\
			& & \\
				
	22. & $T2_{n,r}(C_n(R))$ & $= \{C_n(R)\} \cup \{C_n(S): C_n(S)$ and $C_n(R)$ are isomorphic of \\
			& &  Type-2  w.r.t. $r$, $m > 1$ is a divisor of  $\gcd(n,r)$ and $r\in R$.  \\ 
				
	23. & $(T2_{n,r}(C_n(R)),~\circ)$ & = Type-2 group of $C_n(R)$ w.r.t. $r$, \\
			& & \hfill $m > 1$ is a divisor of  $\gcd(n,r)$ and $r\in R$.\\ 
			& & \\ 	\hline \hline
	\end{tabular}}
	\end{center}
\end{table}

We present here a few definitions and results that are required in the subsequent sections.

\begin{theorem}{\rm\cite{v96}  \quad \label{a1}  In $C_n(R)$, the length of a cycle of period $r$ is $\frac{n}{\gcd(n,r)}$ and the number of disjoint periodic cycles of period $r$ is $\gcd(n,r)$, $r \in R$. \hfill $\Box$}
\end{theorem}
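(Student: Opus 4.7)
The plan is to unpack the definition of a ``cycle of period $r$'' in $C_n(R)$: this is the closed walk obtained by starting at some vertex $v_i$ and repeatedly applying the jump of size $r$, producing the sequence $v_i, v_{i+r}, v_{i+2r}, v_{i+3r}, \ldots$ with subscripts read modulo $n$. Because each consecutive pair differs by $r \in R$, every such pair is an edge of $C_n(R)$, so the walk is a genuine cycle of the graph as soon as it first revisits $v_i$.

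First I would determine the cycle length. The cycle closes at step $k$ precisely when $v_{i+kr}=v_i$, that is, when $kr \equiv 0 \pmod{n}$. This is a standard fact about the additive group $\mathbb{Z}_n$: the smallest positive $k$ with $kr \equiv 0 \pmod{n}$ equals the order of $r$ in $\mathbb{Z}_n$, which is $\tfrac{n}{\gcd(n,r)}$. (One quick justification: writing $m = \gcd(n,r)$, $n = m n'$, $r = m r'$ with $\gcd(n',r')=1$, the congruence $kmr' \equiv 0 \pmod{mn'}$ reduces to $kr' \equiv 0 \pmod{n'}$, and since $\gcd(n',r')=1$ the least solution is $k = n' = n/\gcd(n,r)$.) Hence every periodic cycle of period $r$ has length $\tfrac{n}{\gcd(n,r)}$, independent of the starting vertex $v_i$.

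Next I would count the disjoint periodic cycles. The map $\sigma : v_j \mapsto v_{j+r}$ is a permutation of $V(C_n(R))$, and its orbits are exactly the vertex sets of the periodic cycles of period $r$. These orbits partition the $n$ vertices into classes all of the same size $\tfrac{n}{\gcd(n,r)}$ (by the length computation above), so the number of orbits, equivalently the number of disjoint periodic cycles of period $r$, is
\[
\frac{n}{n/\gcd(n,r)} \;=\; \gcd(n,r),
\]
as required.

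The only subtlety I anticipate is purely cosmetic: one should check that the cycle has length at least $3$ so that it is a cycle in the graph-theoretic sense (this is implicit in the convention, already stated in the paper, that all cycles have length at least $3$); when $\gcd(n,r)=n/2$ the ``cycle'' of length $2$ is the multi-edge $v_iv_{i+n/2}$ treated as a double edge, which the paper has already addressed. With that understood, no further obstacle arises, and the statement reduces to the elementary orbit-counting argument above.
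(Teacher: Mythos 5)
Your proposal is correct: the identification of cycles of period $r$ with the orbits of the shift $v_j \mapsto v_{j+r}$, the computation of the orbit length as the order $\tfrac{n}{\gcd(n,r)}$ of $r$ in $\mathbb{Z}_n$, and the orbit count $n \div \tfrac{n}{\gcd(n,r)} = \gcd(n,r)$ together establish the statement. The paper itself states this theorem without proof (it is quoted from \cite{v96}), and your argument is exactly the standard group-theoretic one that the cited source relies on, so there is nothing to compare beyond noting that your treatment of the degenerate $r = \frac{n}{2}$ case correctly defers to the paper's double-edge convention.
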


\begin{cor}{\rm \cite{v96} \quad \label{a2}  In $C_n(R)$, length of a cycle of period $r$ is $n$ if and only if $\gcd(n,r)$ = $1,$ $r \in R$. \hfill $\Box$}
\end{cor}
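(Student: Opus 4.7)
The plan is to derive this directly as a one-line specialization of Theorem \ref{a1}, which already supplies a closed formula for the length of every period-$r$ cycle in $C_n(R)$. Since the corollary is a biconditional about exactly when that formula equals $n$, no new graph-theoretic argument is required; the work is purely arithmetic.

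First I would invoke Theorem \ref{a1} to replace the geometric quantity ``length of a cycle of period $r$'' with the arithmetic quantity $\frac{n}{\gcd(n,r)}$. The statement to prove then becomes: for $r\in R$, $\frac{n}{\gcd(n,r)} = n$ if and only if $\gcd(n,r) = 1$. Next I would dispatch the two directions. For the forward implication, if $\frac{n}{\gcd(n,r)} = n$, then multiplying by $\gcd(n,r)$ (a positive integer, since $n,r\in\mathbb{N}$ and the gcd exists) gives $n = n\cdot\gcd(n,r)$, hence $\gcd(n,r) = 1$. For the reverse implication, substituting $\gcd(n,r) = 1$ into $\frac{n}{\gcd(n,r)}$ directly yields $n$, so the cycle has length $n$ by Theorem \ref{a1}.

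I do not anticipate any real obstacle, since the corollary is a textbook specialization of the preceding theorem. The only minor subtlety worth flagging in the write-up is that the notion of ``period-$r$ cycle'' is the one fixed in the statement of Theorem \ref{a1}, so no separate verification is needed that such a cycle exists for each $r \in R$; and the clause $r\in R$ is inherited from that theorem, so the hypothesis $1\le r\le\lfloor n/2\rfloor$ is already in force. A single short paragraph of actual proof should suffice.
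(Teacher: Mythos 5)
Your proposal is correct and matches the paper's intent exactly: the corollary is stated as an immediate consequence of Theorem \ref{a1} (with no separate proof given, being quoted from \cite{v96}), and your reduction of the geometric statement to the arithmetic equivalence $\frac{n}{\gcd(n,r)} = n \iff \gcd(n,r) = 1$ is precisely the intended one-line specialization. Both directions of your arithmetic argument are sound, since $n \geq 1$ guarantees that $n = n\cdot\gcd(n,r)$ forces $\gcd(n,r) = 1$.
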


\begin{theorem}{\rm \cite{v17} \quad \label{a3} If $C_n(R)$ and $C_n(S)$ are isomorphic, then there exists a bijection $f$ from $R \to S$ such that $\gcd(n, r)$ = $\gcd(n, f(r))$ for all $r\in R$.  }
\end{theorem}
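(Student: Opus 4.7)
The plan is to show that the multiset $\{\gcd(n,r) : r \in R\}$ (with multiplicities) is an invariant of the abstract graph $C_n(R)$; once that is established, the required bijection $f : R \to S$ follows immediately by matching, within each multiplicity class, the elements of $R$ and $S$ sharing a common value of $\gcd(n,\cdot)$.

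The first step is to apply Theorem \ref{a1} on both sides of a given isomorphism $\phi : C_n(R) \to C_n(S)$. For each $r \in R$, let $H_r \subseteq C_n(R)$ denote the subgraph consisting of all edges of jump $r$; by Theorem \ref{a1}, $H_r$ is a disjoint union of $\gcd(n,r)$ cycles of length $n/\gcd(n,r)$ (a perfect matching when $r = n/2$), and analogously on the $S$-side. Under $\phi$, the image $\phi(H_r)$ remains a disjoint union of $\gcd(n,r)$ cycles of length $n/\gcd(n,r)$ embedded in $C_n(S)$, so the multiset of ``component shapes'' $\bigl\{\bigl(\gcd(n,r),\, n/\gcd(n,r)\bigr) : r \in R\bigr\}$ is carried intact to the target side.

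The second and more delicate step is to identify this transported shape-multiset with the native shape-multiset $\bigl\{\bigl(\gcd(n,s),\, n/\gcd(n,s)\bigr) : s \in S\bigr\}$ of $C_n(S)$. The main obstacle is that a priori $\phi(H_r)$ need not itself be a union of native $H_s$'s, because $\phi$ may fail to conjugate the rotation automorphism $x \mapsto x+1$ of $C_n(R)$ to a rotation of $C_n(S)$. I expect to handle this by induction on the divisors $d \mid n$, starting from the largest $d$ (equivalently, the smallest $n/d$): pure period cycles of length $n/d$ are the shortest, so their number can be extracted from ordinary short-cycle and degree counts in the underlying graph, giving $m_d(R) := |\{r \in R : \gcd(n,r) = d\}| = m_d(S)$; stripping off these contributions and iterating then yields $m_d(R) = m_d(S)$ for every divisor $d \mid n$, which is exactly the multiset equality required. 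A spectral fallback is also available: the multiset of adjacency eigenvalues $\lambda_k = 2\sum_{r \in R}\cos(2\pi kr/n)$ is an isomorphism invariant that groups naturally by $\gcd(n,k)$, and M\"obius inversion on the divisor lattice of $n$ then recovers the counts $m_d(R)$ from this spectral data.
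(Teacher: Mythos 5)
The paper itself gives no proof of Theorem \ref{a3} (it is quoted from \cite{v17} and stated with the proof omitted), so your argument has to stand entirely on its own, and as written it does not. Your reduction to showing that the multiset $\{\gcd(n,r) : r \in R\}$ is a graph invariant is correct (that is equivalent to the existence of the bijection $f$), and your first step is true but carries no force: for any isomorphism $\phi$ and any subgraph $H_r$, the image $\phi(H_r)$ is trivially a disjoint union of $\gcd(n,r)$ cycles of length $n/\gcd(n,r)$. All of the content lies in your second step, and the mechanism you propose there fails. Pure period cycles are in general \emph{not} the shortest cycles of a circulant graph, and the number of $\ell$-cycles of the graph is the sum of the pure-period count you want and a mixed-jump count governed by the solutions of $\pm r_{i_1} \pm \cdots \pm r_{i_\ell} \equiv 0 \pmod{n}$, which depends on the actual arithmetic of the connection set, not on its gcd multiset. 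For example, in $\mathbb{Z}_{35}$ the sets $\{1,2,4\}$ and $\{1,4,6\}$ both have gcd multiset $\{1,1,1\}$, yet $C_{35}(1,2,4)$ contains triangles (jumps $1+1-2 \equiv 0$ and $2+2-4 \equiv 0$) while $C_{35}(1,4,6)$ contains none, and in both graphs every pure period cycle has length $35$. So knowing $m_{d'}(R) = m_{d'}(S)$ for all $d' > d$ does not force the mixed $(n/d)$-cycle counts of the two graphs to agree, and the pure counts cannot be isolated from the (genuinely invariant) total cycle counts: the ``stripping off'' step is unsupported, and it is unsupported at exactly the point the theorem is about, namely that an isomorphism need not respect jump classes.

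The spectral fallback has the same gap one level up. Isomorphic graphs have equal spectra only as \emph{multisets}; the grouping of $\lambda_k$ by $\gcd(n,k)$ is index data, and an abstract isomorphism matches no indices. Individual eigenvalues do not remember their index class --- in $K_n = C_n(1,2,\ldots,\lfloor n/2 \rfloor)$ every non-principal eigenvalue equals $-1$ --- so the grouped sums $T_g = \sum_{\gcd(n,k)=g}\lambda_k$, whose M\"obius inversion would indeed recover the $m_d$, are not known to be isomorphism invariants; establishing that they are is essentially the theorem itself, not a step you may assume. There is a natural attempted repair: the trace $\mathrm{Tr}_{\mathbb{Q}(\zeta_n)/\mathbb{Q}}(\lambda)$ is intrinsic to each eigenvalue and equals $\frac{\phi(n)}{\phi(n/g)}\,T_g$ for every $k$ with $\gcd(n,k) = g$, so the multiset of eigenvalue traces is an invariant packaging the $T_g$ with multiplicities $\phi(n/g)$; but distinct divisors $g_1 \neq g_2$ with $\phi(n/g_1) = \phi(n/g_2)$ make the packaging ambiguous, so even this refinement requires a tie-breaking argument you have not supplied. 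In short, both of your routes presuppose isomorphism-invariant access to precisely the jump-class/index-class structure whose invariance is to be proved, and that presupposition is the missing proof.
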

\begin{proof} The proof is by induction on the order of $R$. 
\end{proof}

\subsection{ Structure of the Paper}

This paper contains 11 sections and an annexure. In Section 2, we present Adam's isomorphism or Type-1 isomorphism of a circulant graph $C_n(R)$ and we define Type-1 group of $C_n(R)$. Section 3 contains our study on Type-2 isomorphism and Type-2 isomorphic circulant graphs and show that the pairs $C_{16}(1, 2, 7)$ and $C_{16}(2, 3, 5)$; $C_{48}(1, 2, 23)$ and $C_{48}(2, 11, 13)$; $C_{48}(1, 6, 23)$ and $C_{48}(6, 11, 13)$; $C_{96}(1, 2, 47)$ and $C_{96}(2, 23, 25)$; $C_{96}(1, 6, 47)$ and $C_{96}(6, 23, 25)$ are Type-2 isomorphic w.r.t. $m$ = 2. In Section 4, all the 8 pairs of circulant graphs of order 16 and all the 12 triples of circulant graphs of order 27 are presented. Section 5 contains the definition of the symmetric equidistance condition w.r.t. a vertex and  results on Type-2 isomorphic circulant graphs w.r.t. $r$ = 2. In Section 6, we define $V_{n,r}(C_n(R))$ and $T2_{n,r}(C_n(R))$ and also Type-1 and Type-2 groups of $C_n(R)$ and prove that $(V_{n,r}(C_n(R)), \circ)$ and $(T2_{n,r}(C_n(R)), \circ)$ are Abelian groups and $T2_{n,r}(C_n(R)) \subseteq V_{n,r}(C_n(R))$. Section 7 contains examples of Type-1 and Type-2 groups. Section 8 shows $C_{54}(1, 3, 17, 19) \cong C_{54}(5, 13, 21, 23)$ but are neither of Type-1 isomorphic nor of Type-2 isomorphic w.r.t. 3 or 21. Section 9 presents  results on Type-2 isomorphic circulant graphs w.r.t. $r$ = 3, 5, 7. In Section 10, we obtain  families of Type-2 isomorphic circulant graphs $C_{np^3}(R)$ w.r.t. $r$ = $p$ and related Abelian groups. Section 11 contains conclusion. Theorems \ref{c10} and \ref{c13} are the main results of the paper. In the Annexure, using Theorem \ref{c13}, a list of $T2_{np^3,p}(C_{np^3}(R^{np^3,x+yp}_i))$ = $\{\Theta_{np^3,p,jn}(C_{np^3}(R^{np^3,x+yp}_i)): j = 0,1,...,p-1\}$ = $\{C_{np^3}(R^{np^3,x+yp}_{i+j}) : j = 0,1,...,p-1$ and $i+j$ is calculated under addition modulo $p\}$ for $p$ = 3,5,7,11 and $n$ = 1 to 5 and also for $p$ = 13 and $n$ = 1 to 3 and are given where $(T2_{np^3,p}(C_{np^3}(R^{np^3,x+yp}_i)), \circ)$ is an abelian group on the $p$ isomorphic circulant graphs $C_{np^3}(R^{np^3,x+yp}_j)$ of Type-2 w.r.t. $r$ $\ni$ $\gcd(n, r)$  = $p$, $1 \leq i,j \leq p$, $1 \leq x \leq p-1$, $y\in\mathbb{N}_0$, $0 \leq y \leq np - 1$, $1 \leq x+yp \leq np^2-1$, $p,np^3-p\in R^{np^3,x+yp}_i$ and $i,j,n,x\in\mathbb{N}$.
 We follow Remark \ref{r11} to establish Type-2 isomorphism w.r.t. $m$ between circulant graphs $C_n(R)$ and $C_n(S)$.   

Effort to understand the isomorphism, pointed out by Elspas and Turner \cite{eltu}, that exists between $C_{16}(1,2,7)$ and $C_{16}(2,3,5)$ and to develop its general theory are the motivation to do this work. For all basic ideas in graph theory, we follow \cite{ha69}.

\section{Adam's Isomorphism or Type-1 Isomorphism and Type-1 group of $C_n(R)$}

In this section, we present our study on Adam's isomorphism or Type-1 isomorphism of circulant graph $C_n(R)$ and define Type-1 group of $C_n(R)$. 

\begin{lemma}{\rm \cite{v17} \quad \label{a4} Let $S \subseteq \mathbb{Z}_n$, $S \neq \emptyset$ and $x \in \mathbb{Z}_n.$ Define a mapping $\varphi_{n, x}:$ $S$ $\rightarrow$ $\mathbb{Z}_n$ $\ni$ $\varphi_{n, x}(s)$ = $xs$, $\forall$ $s \in S,$ under multiplication modulo $n$. Then $\varphi_{n, x}$ is bijective if and only if $S = \mathbb{Z}_n$ and $\gcd(n, x) = 1$. }
\end{lemma}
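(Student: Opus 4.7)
The plan is to prove the biconditional in the obvious two directions, exploiting the fact that both $S$ (when it equals $\mathbb{Z}_n$) and the codomain are finite sets of size $n$, so bijectivity can be replaced by injectivity or surjectivity as convenient.

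For the forward direction, suppose $\varphi_{n,x}$ is bijective. Since $\varphi_{n,x}: S \to \mathbb{Z}_n$ is a bijection between finite sets, we must have $|S| = |\mathbb{Z}_n| = n$, and since $S \subseteq \mathbb{Z}_n$ this forces $S = \mathbb{Z}_n$. To establish $\gcd(n,x) = 1$, I would argue the contrapositive: if $d = \gcd(n,x) > 1$, then $d \mid x$ lets us write $x(n/d) = (x/d)n \equiv 0 \pmod{n}$, so $\varphi_{n,x}(n/d) = 0 = \varphi_{n,x}(0)$ while $0 \neq n/d \in \mathbb{Z}_n$, contradicting injectivity.

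For the reverse direction, assume $S = \mathbb{Z}_n$ and $\gcd(n,x) = 1$. Because the domain and codomain are both $\mathbb{Z}_n$ and finite, it suffices to check injectivity. If $\varphi_{n,x}(s) = \varphi_{n,x}(s')$, then $n \mid x(s-s')$; coprimality of $n$ and $x$ gives $n \mid (s-s')$, so $s = s'$ in $\mathbb{Z}_n$.

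There is essentially no obstacle here beyond the bookkeeping of the two edge cases ($d = 1$ versus $d > 1$, and noting that $n/d \neq 0$ in $\mathbb{Z}_n$ precisely because $d \mid n$ with $d > 1$ forces $1 \le n/d < n$). The whole argument is a direct application of elementary modular arithmetic, so the write-up will be short; the only choice to make is whether to invoke an explicit inverse $y$ of $x$ modulo $n$ (which exists by B\'ezout) to exhibit $\varphi_{n,y}$ as a two-sided inverse, or to use the finite pigeonhole shortcut as above. I would prefer the latter for brevity.
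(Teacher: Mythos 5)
Your proof is correct: both directions are handled completely, and you are careful about the one point that could cause trouble, namely establishing $S = \mathbb{Z}_n$ from the cardinality argument \emph{before} using the elements $0$ and $n/d$ of the domain in the non-injectivity argument for $d = \gcd(n,x) > 1$. Note that the paper itself states Lemma \ref{a4} as a cited result from \cite{v17} with no proof supplied, so there is nothing to compare against; your argument is the standard elementary one (pigeonhole on finite sets plus $n \mid x(s-s')$ and coprimality) and can stand as a self-contained justification.
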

\begin{proof}\quad Here, we give a proof using a property of periodic cycles of a circulant graph $C_n(R)$ with jump size $x$. Let $S = \mathbb{Z}_n$. Then the numbers $0,$ $x,$ $2x,$ $3x,$ $\ldots,$ $(n-1)x$, under multiplication modulo $n$, are all distinct if and only if $\gcd(n, x)$ = 1 since the cycle of period $x$ in $C_n(R),$ $x \in R$, is of length $n$ if and only if $\gcd(n, x)$ = 1, using Corollary \ref{a2}.
	
Conversely, if $S \neq \mathbb{Z}_n$, then $S$ is a proper subset of $\mathbb{Z}_n$ and so $\varphi_{n,x}$ is not a bijective mapping. Hence the result follows. 
\end{proof}

Hereafter, unless otherwise it is mentioned in other way, we consider $\varphi_{n,x}$ with $\gcd(n,x)$ = 1 only. Let $\varphi_n$ = $\{ x \in \mathbb{Z}_n : \gcd(n, x) = 1 \}$. Clearly,  $( \varphi_n,~\circ)$ is an abelian group under the binary operation $\lq\circ\rq$,  multiplication modulo $n$.

\begin{definition}{\rm\cite{ad67}} \quad \label{a5} For $R =$ $\{r_1$, $r_2$, $\ldots$, $r_k\}$ and $S$ = $\{s_1$, $s_2$, $\ldots$, $s_k\}$, circulant graphs $C_n(R)$ and $C_n(S)$ are {\it Adam's isomorphic} if there exists a positive integer $x$ $\ni$ $\gcd(n, x)$ = 1 and $S$ = $\{xr_1$, $xr_2$, $\ldots$, $xr_k\}_n^*$ where $<r_i>_n^*$, the {\it reflexive modular reduction} of a sequence $< r_i >$, is the sequence obtained by reducing each $r_i$ under modulo $n$ to yield $r_i'$ and then replacing all resulting terms $r_i'$ which are larger than $\frac{n}{2}$ by $n-r_i'.$  
\end{definition}

\begin{definition}{\rm \cite{v17}} \label{a6} Let $Ad_n = \{\varphi_{n,x}: x\in \varphi_n\}$, $Ad_n(S) = \{\varphi_{n,x}(S): x\in \varphi_n\}$ = $\{xS: x\in \varphi_n\}$ and $Ad_n(C_n(R)) = T1_n(C_n(R)) = \{\varphi_{n,x}(C_n(R)) = C_n(\varphi_{n,x}(R)) = C_n(xR): x\in \varphi_n\}$ for sets $R,S \subseteq \mathbb{Z}_n$ where $\varphi_{n,x}(R)$ in $C_n(\varphi_{n,x}(R))$ is calculated under the reflexive modulo $n$. Define $'\circ'$ in $Ad_n(C_n(R))$ such that $\varphi_{n,x} \circ \varphi_{n,y}$ = $\varphi_{n,xy}$, $C_n(xR) \circ  C_n(yR)$ = $C_n((xy)R)$ and $(\varphi_{n,x} \circ \varphi_{n,y})(C_n(R))$ = $\varphi_{n,x}(C_n(R)) \circ \varphi_{n,y}(C_n(R))$ for every $x,y \in \varphi_n$, under arithmetic modulo $n$. Clearly, $Ad_n(C_n(R))$ is the set of all circulant graphs which are Adam's isomorphic to $C_n(R)$ and $(Ad_n(C_n(R)), \circ )$ = $(T1_n(C_n(R)), \circ )$ is an abelian group and we call it as the {\em Adam's group} or {\em Type-1 group on} $C_n(R)$ under $'\circ'$.
\end{definition}

{\rm In the above definition, $(\varphi_{n,x} \circ \varphi_{n,y})(C_n(R))$ = $\varphi_{n,x}( \varphi_{n,y}(C_n(R)))$ = $(\varphi_{n,x}(C_n(yR)))$ = $C_n(x(yR))$ = $C_n((xy)R)$ = $C_n(xR)$ $\circ$  $C_n(yR)$ = $\varphi_{n,x}(C_n(R))$ $\circ$ $\varphi_{n,y}(C_n(R))$, $\forall$ $x,y\in \varphi_n$, under arithmetic modulo $n$. Moreover, $C_n(S)\in Ad_n(C_n(R))$ implies, $\exists$ $x\in\varphi_n$ $\ni$ $C_n(S)$ = $\varphi_{n,x}(C_n(R))$ = $C_n(xR)$. Corresponding to $x\in \varphi_n$, $\exists$ $x^*\in \varphi_n$ $\ni$ $xx^*$ = $1\in\varphi_n$ and $\varphi_{n,x^*}(C_n(S))$ = $\varphi_{n,x^*}(C_n(xR))$ = $C_n(x^*(xR))$ = $C_n((x^*x)R)$ = $C_n(R)$. Thus, $C_n(R)$ = $\varphi_{n,x^*}(C_n(S))$ which implies, $C_n(R)\in Ad_n(C_n(S))$, $x^*\in \varphi_n$. This also implies, $Ad_n(C_n(R))$ = $Ad_n(C_n(S))$ whenever $C_n(S)\in Ad_n(C_n(R))$ or $C_n(R)\in Ad_n(C_n(S))$. Thus, we get the following result corresponding to $Ad_n(C_n(R))$. }
 
\begin{theorem} \quad \label{a7b} Let $Ad_n(C_n(R))$ = $\{\varphi_{n,x}(C_n(R)) = C_n(xR): x\in\varphi_n \}$. Then, $C_n(S)\in Ad_n(C_n(R))$ if and only if $Ad_n(C_n(R))$ = $Ad_n(C_n(S))$ if and only if $C_n(R)\in Ad_n(C_n(S))$. \hfill $\Box$
\end{theorem}

Circulant graphs $C_{54}(1,17,18,19),$ $C_{54}(5,13,18,23)$ and $C_{54}(7,11,18,25)$ are Adam's isomorphic since $C_{54}(5(1,17,~18,19))$ = $C_{54}(5,13,~18,23)$ and  $C_{54}(7(1,17,~18,19))$ = $C_{54}(7,11,~18,25)$. Also, $Ad_{54}(C_{54}(1,17,18,19))$ = $\{\varphi_{54,x}(C_{54}(1,17,18,19)): x\in \varphi_{54}\}$ = $\{\varphi_{54,x}(C_{54}(1,17,18,19)): x = 1,5,7,$ $11,13,17,19,23,25,29,31,35,37,41,43,47,49,53\}$ = $\{C_{54}(1,17, 18,19),$ $C_{54}(5,13,18,23),$ $C_{54}(7,11,18,$ $25)\}$ = $Ad_{54}(C_{54}(5,13,18,23))$ = $Ad_{54}(C_{54}(7,11,18,25))$. The three Adam's isomorphic circulant graphs are shown in Figures 4, 5, 6.  

\begin{figure}[ht]
	\centerline{\includegraphics[width=6.3in]{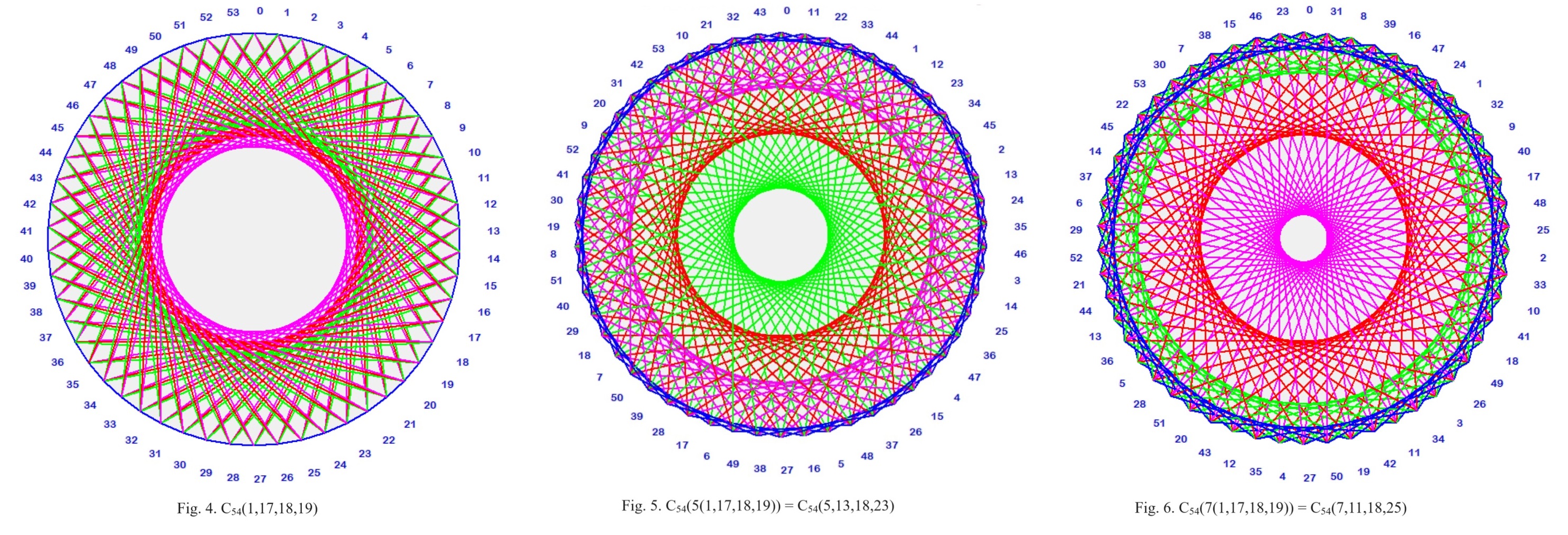}}
\end{figure}
A circulant graph $C_n(R)$ is said to have {\it Cayley Isomorphism} (CI) property if whenever $C_n(S)$ is isomorphic to $C_n(R),$ they are Adam's isomorphic. CI-problem determines which graphs (or which groups) have the $CI$-property. Muzychuk \cite{mu04, mu97} completed the complete classification of cyclic $CI$-groups, and moreover, derived an efficient algorithm for recognizing isomorphisms between circulant graphs using certain set of permutations, and proved that two circulants are isomorphic if and only if they are isomorphic via some function contained in the set. But investigation of circulant graphs without $CI$-property is not much done.  

\section{Type-2 isomorphism and Type-2 isomorphic circulant graphs}

Adam conjectured that any two isomorphic circulant graphs are
Adam’s isomorphic \cite{ad67}. Elspas and Turner \cite{eltu} rised a question on the type of isomorphism that exists on the two circulant graphs $C_{16}(1, 2, 7)$ and $C_{16}(2, 3, 5)$ which are  isomorphic but not of Adam’s. Circulant graphs $C_{16}(1, 2, 7)$ and $C_{16}(2, 3, 5)$ are given in Figures 7 and 8. In \cite{v96}, Vilfred defined a new type of circulant graph isomorphism, different from Adam’s isomorphism and studied under the heading ‘generalized circulant graph isomorphism’. Hereafter, Adam’s isomorphism on circulant graphs is also called as {\em Type-1 isomorphism} and the new type of isomorphism that is defined below is called {\em Type-2 isomorphism}.  In this section, we present our study on Type-2 isomorphism and Type-2 isomorphic circulant graphs. We also prove that $C_{16}(1, 2, 7)$ and $C_{16}(2, 3, 5)$ are Type-2 isomorphic and list all pairs of isomorphic circulant graphs of order 16 as well as all triples of isomorphic circulant graphs of order 27. This study is an extension, corrected and improved results obtained in \cite{v20,vw0A}. We start with a slightly modified lemma but with the same proof as given in \cite{v20} that is essential to define Type-2 isomorphism.
\begin{figure}[ht]
	\centerline{\includegraphics[width=3.2in]{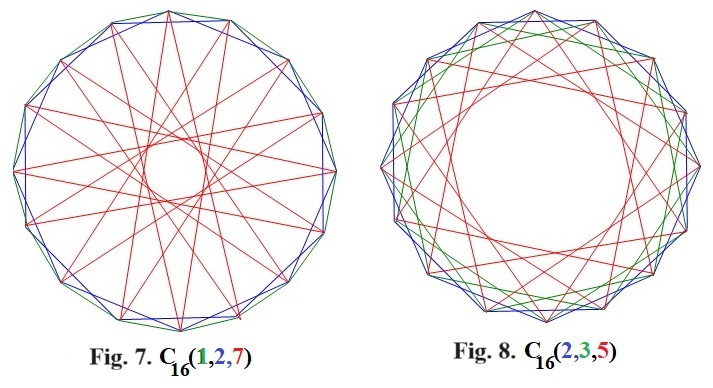}}
\end{figure}

\begin{lemma}{\rm \quad \label{a7}  Let $m >1$ be a divisor of $n$. Then, for each value of $t$, the mapping $\Theta_{n,m,t}: \mathbb{Z}_n \rightarrow \mathbb{Z}_n$ defined by $\Theta_{n,m,t}(x)$ = $x+jtm$, $x \in \mathbb{Z}_n$, is bijective under arithmetic modulo $n$ where $x = qm+j,$ $0 \leq j \leq m-1,$ $0 \leq q,t \leq \frac{n}{m}-1$ and $j,m,q,t \in \mathbb{Z}_n$. In particular, the result is true when $m$ = $\gcd(n, r) > 1$ and $r \in \mathbb{Z}_n$. } 
\end{lemma}
\begin{proof}\quad From the definition of $\Theta_{n,m,t}$, we get the following properties. 
	\begin{enumerate}
		\item[{\rm(i)}] $\Theta_{n,m,t}(km) = km$ for every $k \in \mathbb{Z}_n$, $km \in \mathbb{Z}_n$ and $0 \leq t \leq \frac{n}{m}-1$.
		\item[{\rm(ii)}] For $0 \leq i,j \leq m-1$ and $0 \leq t \leq \frac{n}{m}-1$, $\Theta_{n,m,t}(i)$ = $\Theta_{n,m,t}(j)$ if and only if $i$ = $j$ if and only if $\Theta_{n,m,t}(qm+i)$ = $\Theta_{n,m,t}(qm+j),$ $\gcd(n,r)$ = $m > 1$, and $0 \leq qm \leq n-1$.
		\item[{\rm(iii)}]	For $0 \leq i \leq m-1$, $0 \leq km,qm \leq n-1$ and $0 \leq q,t \leq \frac{n}{m}-1$,  $\Theta_{n,m,t}(km+i)$ = $\Theta_{n,m,t}(qm+i)$ if and only if $k$ = $q.$ 
		
		From the above three properties, we get,
		
		\item[{\rm(iv)}]	For $0 \leq i,j \leq m-1$, $0 \leq km,qm \leq n-1$ and $0 \leq t \leq \frac{n}{m}-1$, $\Theta_{n,m,t}(i+km)$ = $\Theta_{n,m,t}(j+qm)$ if and only if $i$ = $j$ and $k$ = $q$.  
	\end{enumerate}
	This implies that for each value of $t$, the mapping $\Theta_{n,m,t}$ is bijective when $m > 1$ is a divisor of $n$ and $0 \leq t \leq \frac{n}{m}-1$.  
	
	In particular, when $m$ = $\gcd(n, r) > 1$ and $r \in \mathbb{Z}_n$ implies, $m > 1$ is a divisor of $n$ and hence the result is also true in this case. Hence, we get the result.   
\end{proof}

\begin{rem}\quad \label{r8} Related to Lemma \ref{a7}, when $n > r > m$ and $\gcd(n, r)$ = $m > 1$, the mapping $\Theta_{n,r,t}: \mathbb{Z}_n \rightarrow \mathbb{Z}_n$ defined by $\Theta_{n,r,t}(x)$ = $x+jrt$ need not be bijective for every $t$ where  $x$ = $qr+j$, $0 \leq j \leq r-1$, $0 \leq q,t \leq \frac{n}{r}$, $0 \leq qrt \leq n-1$ and $j,m,q,t\in\mathbb{Z}_n$. Tables 2, 3 and 4 illustrate the above for $r$ = 6, 9 and 12, respectively when $R$ = $\{1,8,10,r, 27-r, 17,19,26\}$ $($of $C_{27}(R \cup (27-R)))$ so that $\Theta_{27,6,t}(x)$ = $x+6jt$, $\Theta_{27,9,t'}(x)$ = $x+9j't'$ and $\Theta_{27,12,t}(x)$ = $x+12j''t$, $0 \leq j \leq 5$, $0 \leq j' \leq 8$, $0 \leq j'' \leq 11$, $0 \leq t \leq \frac{n}{\gcd(n,r)}-1 = \frac{27}{\gcd(27, 6)}-1 = \frac{27}{\gcd(27, 12)}-1 = 8$ and $0 \leq t' \leq \frac{n}{\gcd(n,r)}-1 = 2$. In these tables non-bijective indicates the corresponding mapping $\Theta_{n,r,t}: R \cup (27-R) \rightarrow R \cup (27-R)$ is not bijective and thereby the mapping  $\Theta_{27,r,t}: \mathbb{Z}_{27} \rightarrow \mathbb{Z}_{27}$ defined by $\Theta_{n,r,t}(x)$ = $x+jrt$ need not be bijective for every $t$, $r$ = 6 and 12, and $R \cup (27-R) \subseteq \mathbb{Z}_{27}$. 
\end{rem}

Correspondingly, we get the following open problem.

\begin{oprm}\quad \label{op1} {\rm Let $n > n-r > r > m$, $\gcd(n, r)$ = $m > 1$ and $\Theta_{n,r,t}:$ $\mathbb{Z}_n \rightarrow \mathbb{Z}_n$ such that $\Theta_{n,r,t}(x)$ = $x+jrt$ where  $x$ = $qr+j$, $0 \leq j \leq r-1$, $0 \leq q,t \leq \frac{n}{\gcd(n,r)}-1$, $0 \leq qrt \leq n-1$ and $j,m,q,t \in \mathbb{Z}_n$. Then, 
		\begin{enumerate}
			\item Find values of $r$ and $t$ for which $\Theta_{n,r,t}$ is bijective.
			\item Find values of $r$ and $t$ for which $\Theta_{n,r,t}$ is not  bijective. \hfill $\Box$	 
	\end{enumerate} } 
\end{oprm}

\begin{table}\label{2} 
	\caption{\footnotesize{Calculation of $\Theta_{27,6,t}(R \cup (27-R))$, $R$ = $\{1,6,8,10\}$, $\Theta_{27,6,t}(x)$ = $x+6jt$, $x = 6q+j$, $0 \leq j \leq 5$, $x\in R \cup (27-R) \subseteq \mathbb{Z}_{27}$, $0 \leq t \leq \frac{27}{\gcd(27,6)}-1 = 8$.}}
	\begin{center}
		\scalebox{0.75}{
			\begin{tabular}{||c||c||c|c|c|c|c|c|c|c|c||} \hline \hline
				~ \hspace{.05cm} $t$ \hspace{.1cm} & \backslashbox{$\Theta_{27,6,t}(x)$}{Jump size \\ $x$}
				& \hspace{.1cm} 1 \hspace{.1cm} & \hspace{.1cm} 6 \hspace{.1cm} & \hspace{.1cm} 8 \hspace{.1cm} & \hspace{.1cm} 10 \hspace{.1cm} & \hspace{.1cm} 17 \hspace{.1cm} & \hspace{.1cm} 19 \hspace{.1cm} & \hspace{.1cm} 21 \hspace{.1cm} & \hspace{.1cm} 26 \hspace{.1cm} & -- \\\hline \hline
				& & &  &   &  &  & & & &  \\
				t & $\Theta_{27,6,t}(x)$ & 1+6t & 6 & 8+12t & 10+24t & 17+3t & 19+6t & 21+18t & 26+12t & {Identity or  Non-bijective}  \\ \hline \hline
				& & &  &   &  &  & & & &  \\
				0 & $\Theta_{27,6,0}(x)$ & 1 & 6 & 8 & 10 & 17 & 19 & 21 & 26 & Identity  \\\hline
				& & &  &   &  &  & & & & \\
				1 & $\Theta_{27,6,1}(x)$ & 7 & 6 & 20 & 7 & 20 & 25 & 12 & 11 & Non-bijective \\\hline 
				& & &  &   &  &  & & & & \\
				2 & $\Theta_{27,6,2}(x)$ & 13 & 6 & 5 & 4 & 23 & 4 & 3 & 23  & Non-bijective \\\hline
				& & &  &   &  &  & & &  & \\
				3 & $\Theta_{27,6,3}(x)$ & 19 & 6 & 17 & 1 & 26 & 10 & 21 & 8 & Identity \\\hline
				& & &  &   &  &  & & & & \\
				4 & $\Theta_{27,6,4}(x)$ & 25 & 6 & 2 & 25 & 2 & 16 & 12 & 20 & Non-bijective \\\hline
				& & &  &   &  &  & & & & \\
				5 & $\Theta_{27,6,5}(x)$ & 4 & 6 & 14 & 22 & 5 & 22 & 3 & 5 & Non-bijective \\\hline
				& & &  &   &  &  & & & & \\
				6 & $\Theta_{27,6,6}(x)$ & 10 & 6 & 26 & 19 & 8 & 1 & 21 & 17 & Identity \\\hline
				& & &  &   &  &  & & & & \\
				7 & $\Theta_{27,6,7}(x)$ & 16 & 6 & 11 & 16 & 11 & 7 & 12 & 2 & Non-bijective \\\hline
				& & &  &   &  &  & & & & \\
				8 & $\Theta_{27,6,8}(x)$ & 22 & 6  & 23 & 13 & 14 & 13 & 3 & 14 & Non-bijective  \\\hline\hline
		\end{tabular}}
	\end{center}
\end{table} 

\begin{table}\label{3} 
	\caption{\footnotesize{Calculation of $\Theta_{27,9,t}(R \cup (27-R))$, $R$ = $\{1,8,9,10\}$, $\Theta_{27,9,t}(x)$ = $x+9jt$, $x = 9q+j$, $0 \leq j \leq 8$, $x\in R \cup (27-R) \subseteq \mathbb{Z}_{27}$, $0 \leq t \leq \frac{27}{\gcd(27,9)}-1 = 2$.}}
	\begin{center}
		\scalebox{0.75}{
			\begin{tabular}{||c||c||c|c|c|c|c|c|c|c|c||} \hline \hline
				~ \hspace{.05cm} $t$ \hspace{.1cm} & \backslashbox{$\Theta_{27,9,t}(x)$}{Jump size \\ $x$}
				& \hspace{.1cm} 1 \hspace{.1cm} & \hspace{.1cm} 8 \hspace{.1cm} & \hspace{.1cm} 9 \hspace{.1cm} & \hspace{.1cm} 10 \hspace{.1cm} & \hspace{.1cm} 17 \hspace{.1cm} & \hspace{.1cm} 18 \hspace{.1cm} & \hspace{.1cm} 19 \hspace{.1cm} & \hspace{.1cm} 26 \hspace{.1cm} & -- \\\hline \hline
				& & &  &   &  &  & & & &  \\
				t & $\Theta_{27,9,t}(x)$ & 1+9t & 8+18t & 9 & 10+9t & 17+18t & 18 & 19+9t & 26+18t & {Identity or  non-bijective}   \\ \hline \hline
				& & &  &   &  &  & & & &  \\
				0 & $\Theta_{27,9,0}(x)$ & 1 & 8 & 9 & 10 & 17 & 18 & 19 & 26 & Identity  \\\hline
				& & &  &   &  &  & & & & \\
				1 & $\Theta_{27,9,1}(x)$ & 10 & 26 & 9 & 19 & 8 & 18 & 1 & 17 & Identity \\\hline 
				& & &  &   &  &  & & & & \\
				2 & $\Theta_{27,9,2}(x)$ & 19 & 17 & 9 & 1 & 26 & 18 & 10 & 8  & Identity \\\hline\hline
		\end{tabular}}
	\end{center}
\end{table} 

\begin{table}\label{4} 
	\caption{\footnotesize{Calculation of $\Theta_{27,12,t}(R \cup (27-R))$, $R$ = $\{1,8,10,12\}$, $\Theta_{27,12,t}(x)$ = $x+12jt$, $x = 12q+j$, $0 \leq j \leq 11$, $x\in R \cup (27-R)$, $0 \leq t \leq \frac{27}{\gcd(27,12)}-1 = 8$.}}
	\begin{center}
		\scalebox{0.75}{
			\begin{tabular}{||c||c||c|c|c|c|c|c|c|c|c||} \hline \hline
				~ \hspace{.05cm} $t$ \hspace{.1cm} & \backslashbox{$\Theta_{27,12,t}(x)$}{Jump size \\ $x$}
				& \hspace{.1cm} 1 \hspace{.1cm} & \hspace{.1cm} 8 \hspace{.1cm} & \hspace{.1cm} 10 \hspace{.1cm} & \hspace{.1cm} 12 \hspace{.1cm} & \hspace{.1cm} 15 \hspace{.1cm} & \hspace{.1cm} 17 \hspace{.1cm} & \hspace{.1cm} 19 \hspace{.1cm} & \hspace{.1cm} 26 \hspace{.1cm} & -- \\\hline \hline
				& & &  &   &  &  & & & &  \\
				t & $\Theta_{27,12,t}(x)$ & 1+12t & 8+15t & 10+12t & 12 & 15+9t & 17+6t & 19+3t & 26+24t & {Identity or  Non-bijective}   \\ \hline \hline
				& & &  &   &  &  & & & &  \\
				0 & $\Theta_{27,12,0}(x)$ & 1 & 8 & 10 & 12 & 15 & 17 & 19 & 26 & Identity  \\\hline
				& & &  &   &  &  & & & & \\
				1 & $\Theta_{27,12,1}(x)$ & 13 & 23 & 22 & 12 & 24 & 23 & 22 & 23 & Non-bijective \\\hline 
				& & &  &   &  &  & & & & \\
				2 & $\Theta_{27,12,2}(x)$ & 25 & 11 & 7 & 12 & 6 & 2 & 25 & 20  & Non-bijective \\\hline
				& & &  &   &  &  & & &  & \\
				3 & $\Theta_{27,12,3}(x)$ & 10 & 26 & 19 & 12 & 15 & 8 & 1 & 17 & Identity \\\hline
				& & &  &   &  &  & & & & \\
				4 & $\Theta_{27,12,4}(x)$ & 22 & 14 & 4 & 12 & 24 & 14 & 4 & 14 & Non-bijective \\\hline
				& & &  &   &  &  & & & & \\
				5 & $\Theta_{27,12,5}(x)$ & 7 & 2 & 16 & 12 & 6 & 20 & 7 & 11 & Non-bijective \\\hline
				& & &  &   &  &  & & & & \\
				6 & $\Theta_{27,12,6}(x)$ & 19 & 17 & 1 & 12 & 15 & 26 & 10 & 8 & Identity \\\hline
				& & &  &   &  &  & & & & \\
				7 & $\Theta_{27,12,7}(x)$ & 4 & 5 & 13 & 12 & 24 & 5 & 13 & 5 & Non-bijective \\\hline
				& & &  &   &  &  & & & & \\
				8 & $\Theta_{27,12,8}(x)$ & 16 & 20  & 25 & 12 & 6 & 11 & 16 & 2 & Non-bijective \\\hline\hline
		\end{tabular}}
	\end{center}
\end{table} 

Based on Lemma \ref{a7}, we define Type-2 isomorphism of circulant graphs in a more general sense than as given in \cite{v20} as follows.

\begin{definition} \quad  \label{a10} Let $V(K_n) = \{u_0,u_1,u_2,...,u_{n-1}\}$, $V(C_n(R)) = \{v_0,v_1,v_2,...,$ $v_{n-1}\},$ $r\in R$, $m > 1$ be a divisor of $\gcd(n, r)$ and $|R| \geq 3$.  Define 1-1 mapping (see Lemma \ref{a7}) $\Theta_{n,m,t} :$ $V(C_n(R)) \rightarrow V(K_n)$ $\ni$ $\Theta_{n,m,t}(v_x) = u_{x+jtm}$,  $\Theta_{n,m,t}((v_x, v_{x+s}))$ = $(\Theta_{n,m,t}(v_x),$ $\Theta_{n,m,t}(v_{x+s}))$ under subscript arithmetic modulo $n$ and $\Theta_{n,m,t}(C_n(R))$ = $C_n(\Theta_{n,m,t}(R))$ where $\Theta_{n,m,t}(R)$ in $C_n(\Theta_{n,m,t}(R))$ is calculated under the reflexive modulo $n$, $\forall$ $x \in \mathbb{Z}_n$, $x = qm+j,$ $0 \leq j \leq m-1$, $s\in R$ and $0 \leq q,t \leq \frac{n}{m} -1$. And for a particular value of $t,$ if  $\Theta_{n,m,t}(C_n(R))$ = $C_n(S)$ for some $S$  and  $S \neq yR$ for all $y\in \varphi_n$ under reflexive modulo $n,$ then $C_n(R)$ and $C_n(S)$ are called \it {isomorphic circulant graphs of Type-2 w.r.t. $m$.} 
\end{definition}

Using the above definition, we show in the following problem that circulant graphs $C_{16}(1,2,7)$ and $C_{16}(2,3,5)$ are Type-2 isomorphic w.r.t. $m$ = 2.

\begin{prm}\quad \label{p3} {\rm Show that circulant graphs $C_{16}(1,2,7)$ and $C_{16}(2,3,5)$ are Type-2 isomorphic w.r.t. $m$ = 2. Also, find $\Theta_{16,2,t}(C_{16}(1,2,7))$ for all possible values of $t$. }
\end{prm}
\noindent
{\bf Solution.}\quad Let $R$ = $\{1,2,7\}$, $S$ = $R \cup (16-R)$ and $T$ = $\{2,3,5\}$. $2\in R,T$ and $\gcd(n, r)$ = $\gcd(16, 2)$ = 2. Let $m$ = 2. We calculate $\Theta_{16,2,t}(C_{16}(1,2,7))$ for $t$ = 1 to $\frac{16}{\gcd(16, 2)}-1$ = 7 and present it in Table 5. From Table 5, we get,  

$\Theta_{16,2,0}(S)$ = $\Theta_{16,2,0}(\{1,2,7, 9,14,15\})$ = $\{1,2,7, 9,14,15\}$. 

$\Rightarrow$ $\Theta_{16,2,0}(C_{16}(R))$ = $C_{16}(R)$;

$\Theta_{16,2,1}(S)$ = $\Theta_{16,2,1}(\{1,2,7, 9,14,15\})$ = $\{1,2,3, 9,11,14\}$. 

$\Rightarrow$   $\Theta_{16,2,1}(C_{16}(R))$ $\neq$ $C_{16}(T)$ for any $T$ using Theorem \ref{ab14}; 

$\Theta_{16,2,2}(S)$ = $\Theta_{16,2,2}(\{1,2,7, 9,14,15\})$ = $\{2,3,5, 11,13,14\}$. 

$\Rightarrow$  $\Theta_{16,2,2}(C_{16}(R))$ = $C_{16}(2,3,5)$; 

$\Theta_{16,2,3}(S)$ = $\Theta_{16,2,3}(\{1,2,7, 9,14,15\})$ = $\{2,5,7, 13,14,15\}$. 

$\Rightarrow$  $\Theta_{16,2,3}(C_{16}(R))$ $\neq$ $C_{16}(T)$ for any $T$ using Theorem \ref{ab14};  

$\Theta_{16,2,4}(S)$ = $\Theta_{16,2,4}(\{1,2,7, 9,14,15\})$ = $\{1,2,7, 9,14,15\}$. 

$\Rightarrow$  $\Theta_{16,2,4}(C_{16}(R))$ = $C_{16}(R)$; 

$\Theta_{16,2,5}(S)$ = $\Theta_{16,2,5}(\{1,2,7, 9,14,15\})$ = $\{1,2,3, 9,11,14\}$. 

$\Rightarrow$   $\Theta_{16,2,5}(C_{16}(R))$ $\neq$ $C_{16}(T)$ for any $T$ using Theorem \ref{ab14}; 

$\Theta_{16,2,6}(S)$ = $\Theta_{16,2,6}(\{1,2,7, 9,14,15\})$ = $\{2,3,5, 11,13,14\}$. 

$\Rightarrow$  $\Theta_{16,2,6}(C_{16}(R))$ = $C_{16}(2,3,5)$;

$\Theta_{16,2,7}(S)$ = $\Theta_{16,2,7}(\{1,2,7, 9,14,15\})$ = $\{2,5,7, 13,14,15\}$. 

$\Rightarrow$ $\Theta_{16,2,7}(C_{16}(R))$ $\neq$ $C_{16}(T)$ for any $T$ using Theorem \ref{ab14}.

It is clear from Table 5 that there are 4 distinct isomorphic graphs $\Theta_{16,2,t}(C_{16}(1,2,7))$, without vertex label, for $t$ = 0, 1, 2,  3 and these graphs are shown in Figures 9 to 12.

Also, $\Theta_{16,2,2}(C_{16}(1,2,7))$ = $C_{16}(2,3,5)$ implies, $C_{16}(1,2,7)$ $\cong$ $C_{16}(2,3,5)$. 

$Ad_{16}(C_{16}(1,2,7))$ = $\{\varphi_{16,x}(C_{16}(1,2,7)): x = 1,3,5,7,9,11,13,15\}$ 

= $\{C_{16}(x(1,2,7)): x = 1,3,5,7,9,11,13,15\}$ = $\{C_{16}(1,2,7), C_{16}(3,5,6)\}$ 

= $\{C_{16}(x(1,2,7)): x = 1,3\}$. This implies, $C_{16}(2,3,5)$ $\notin Ad_{16}(C_{16}(1,2,7))$. Hence, $C_{16}(1,2,7)$ and $C_{16}(2,3,5)$ are Type-2 isomorphic w.r.t. $r$ = 2.  \hfill $\Box$
\begin{table}
	\caption{ Calculation of $\Theta_{16,2,t}(\{1,2,7, 9,14,15\})$ for $t$ = 0 to 7.}
	\begin{center}
		\scalebox{.95}{
			\begin{tabular}{||c||*{6}{c|}|c||c||}\hline \hline 
				Jump size $x$ &  1 & 2 & 7 & 9 & 14 & 15 & Pairwise Equidistant  \\ \cline{1-7} 
				\backslashbox{ {\hspace{.5cm} $t$}}{$\Theta_{16,2,t}(x)$}
				& $1+2t$ & 2 &
				$7+2t$ & $9+2t$ & 14 & $15+2t$ & from $v_0$ or not
				\\\hline \hline
				0 & 1 & 2 & 7 & 9 & 14 & 15 & Yes \\
				1 & 3 & 2 & 9 & 11 & 14 & 1 & Not\\
				2 & 5 & 2 & 11 & 13 & 14 & 3 & Yes \\
				3 & 7 & 2 & 13 & 15 & 14 & 5 & Not\\
				\hline\hline 
				4 & 9 & 2 & 15 & 1 & 14 & 7 & Yes \\
				5 & 11 & 2 & 1 & 3 & 14 & 9 & Yes \\
				6 & 13 & 2 & 3 & 5 & 14 & 11 & Yes \\
				7 & 15 & 2 & 5 & 7 & 14 & 13 & Not \\
				\hline \hline
		\end{tabular}}
	\end{center}
\end{table} 

\begin{figure}[ht]
	\centerline{\includegraphics[width=4.5in]{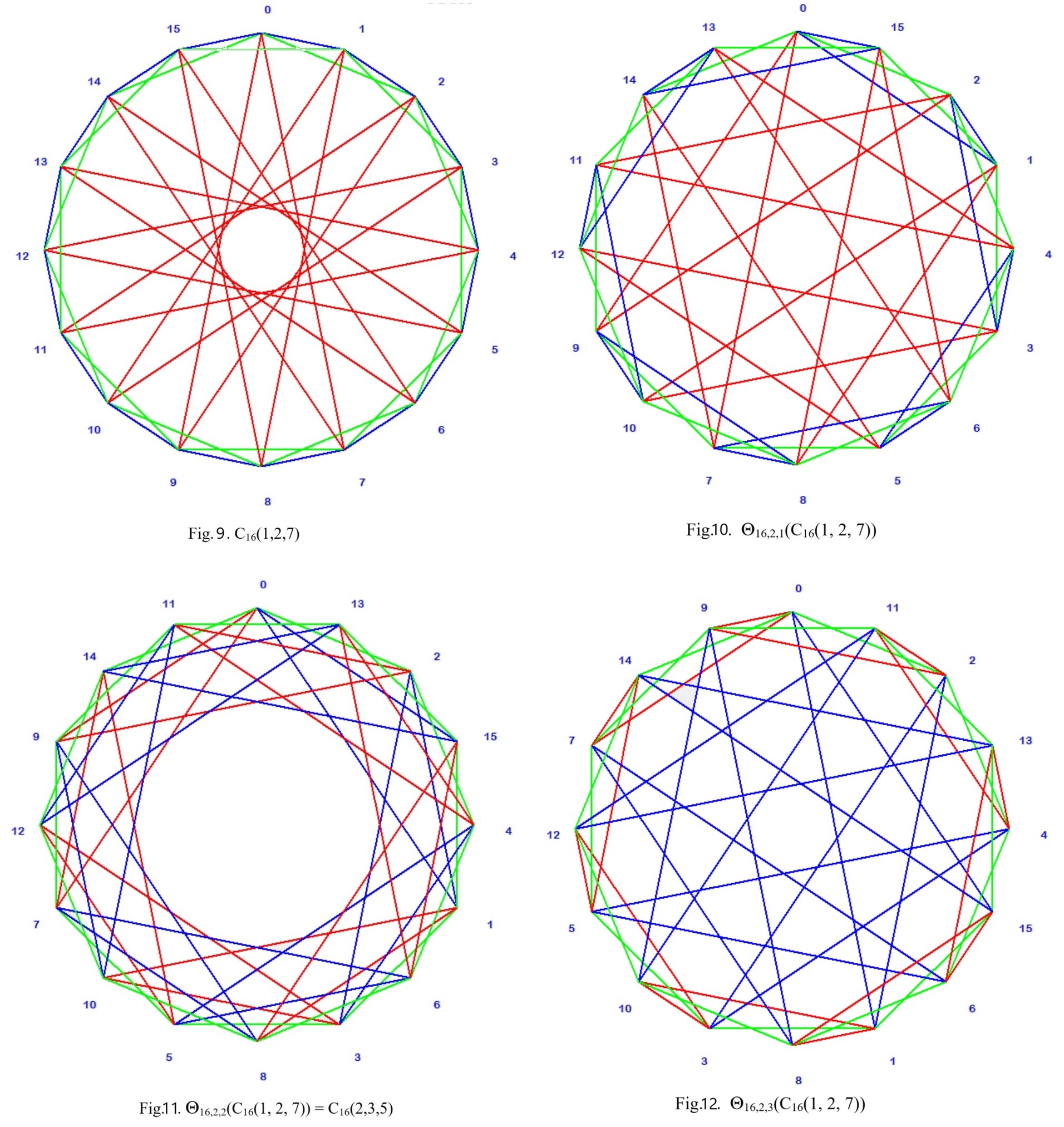}}
\end{figure}

\begin{prm}\quad \label{p4} {\rm Show that circulant graphs 
\begin{enumerate}
\item [\rm (a1)] $C_{48}(1,2,23)$ and $C_{48}(2,11,13)$ are Type-2 isomorphic w.r.t. $m$ = 2.

\item [\rm (a2)] $C_{48}(1,2,23)$ and $C_{48}(2,11,13)$ are not Type-2 isomorphic w.r.t. $m$ = 6.

\item [\rm (b1)] $C_{48}(1,6,23)$ and $C_{48}(6,11,13)$ are Type-2 isomorphic w.r.t. $m$ = 2.

\item [\rm (b2)] $C_{48}(1,6,23)$ and $C_{48}(6,11,13)$ are Type-2 isomorphic w.r.t. $m$ = 6.
\end{enumerate}
 }
\end{prm}
\noindent
{\bf Solution.}\quad 
\begin{enumerate}
\item [\rm (a1)] Let $R$ = $\{1,2,23\}$, $S$ = $R \cup (48-R)$ and $T$ = $\{2,11,13\}$. $2\in R,T$ and $\gcd(n, r)$ = $\gcd(48, 2)$ = 2. Let $m$ = 2. We calculate $\Theta_{48,2,t}(C_{48}(1,2,23))$ for $t$ = 1 to $\frac{48}{\gcd(48, 2)}-1$ = 23. Consider,

$\Theta_{48,2,1}(S)$ = $\Theta_{48,2,1}(\{1,2,23, 25,46,47\})$ = $\{3,2,25, 27,46,1\}$ = $\{1,2,3, 25,27,46\}$; 

$\Theta_{48,2,2}(S)$ = $\Theta_{48,2,2}(\{1,2,23, 25,46,47\})$ =  $\{5,2,27, 29,46,3\}$ = $\{2,3,5, 27,29,46\}$; 

$\Theta_{48,2,3}(S)$ = $\Theta_{48,2,3}(\{1,2,23, 25,46,47\})$ = $\{7,2,29, 31,46,5\}$ = $\{2,5,7, 29,31,46\}$; 

$\Theta_{48,2,4}(S)$ = $\Theta_{48,2,4}(\{1,2,23, 25,46,47\})$ = $\{9,2,31, 33,46,7\}$ = $\{2,7,9, 31,33,46\}$; 

$\Theta_{48,2,5}(S)$ = $\Theta_{48,2,5}(\{1,2,23, 25,46,47\})$ = $\{11,2,33, 35,46,9\}$ = $\{2,9,11, 33,35,46\}$. 

$\Rightarrow$  $\Theta_{48,2,i}(C_{48}(R))$ $\neq$ $C_{48}(X)$ for any $X$ for $i$ = 1 to 5 using Theorem \ref{ab14}. Also, 

$\Theta_{48,2,6}(S)$ = $\Theta_{48,2,6}(\{1,2,23, 25,46,47\})$ =  $\{13,2,35, 37,46,11\}$ =  $\{2,11,13, 35,37,46\}$. 

$\Rightarrow$  $\Theta_{48,2,6}(C_{48}(R))$ = $C_{48}(T)$. $\Rightarrow$  $C_{48}(R)$ $\cong$ $C_{48}(T)$.
 
By similar calculations, we get, 
$\Theta_{48,2,i+6t}(S)$ = $\Theta_{48,2,i}(S)$  for  $0 \leq i \leq 5$ and $0 \leq 2(i+6t) \leq 23$; 

$\Theta_{48,2,i+6t}(S)$ $\neq$ $C_{48}(X)$ for any $X$ for $1 \leq i \leq 5$ and $0 \leq 2(i+6t) \leq 23$ and    

  $\Theta_{48,2,6}(C_{48}(1,2,23))$ = $C_{48}(2,11,13)$. $\Rightarrow$ $C_{48}(1,2,23)$ $\cong$ $C_{48}(2,11,13)$. Also,

$Ad_{48}(C_{48}(1,2,23))$ = $Ad_{48}(C_{48}(1,2,23, 25,46,47))$ 

= $\{\varphi_{48,x}(C_{48}(1,2,23, 25,46,47)): x\in\varphi_{48}\}$ 
 =  $\{C_{48}(x(1,2,23, 25,46,47)): x\in\varphi_{48}\}$ 

= $\{C_{48}(1,2,23, 25,46,47), C_{48}(5,10,19, 29,38,43),$ 

\hfill $C_{48}(7,14,17, 31,34,41), C_{48}(11,13,22, 36,35,37)\}$ 

= $\{C_{48}(1,2,23), C_{48}(5,10,19), C_{48}(7,14,17), C_{48}(11,13,22)\}$.

$\Rightarrow$  $C_{48}(2,11,13) \notin Ad_{48}(C_{48}(1,2,23))$.

 This implies, $C_{48}(1,2,23)$ and $C_{48}(2,11,13)$ are Type-2 isomorphic w.r.t. $r$ = 2. 
 
 \item [\rm (a2)] Let $R$ = $\{1,2,23\}$, $S$ = $R \cup (48-R)$ and $T$ = $\{2,11,13\}$. $2\in R,T$ and $\gcd(48, 6)$ = 6. 
 \\
 Let $m$ = 6. Calculating $\Theta_{48,6,t}(C_{48}(1,2,23))$ for $t$ = 1 to $\frac{48}{\gcd(48, 6)}-1$ = 7, we get,
 
 $\Theta_{48,6,1}(S)$ = $\Theta_{48,6,1}(\{1,2,23, 25,46,47\})$ = $\{7,14,5, 31,22,29\}$ = $\{5,7,14, 22,29,31\}$; 
   
 $\Theta_{48,6,2}(S)$ = $\Theta_{48,6,2}(\{1,2,23, 25,46,47\})$ = $\{13,26,35, 37,46,11\}$ = $\{11,13,26, 35,37,46\}$; 

$\Theta_{48,6,3}(S)$ = $\Theta_{48,6,3}(\{1,2,23, 25,46,47\})$ = $\{19,38,17, 43,22,41\}$ = $\{17,19,22, 38,41,43\}$; 
 
$\Theta_{48,6,4}(S)$ = $\Theta_{48,6,4}(\{1,2,23, 25,46,47\})$ = $\{25,2,47, 1,46,23\}$ = $\{1,2,23, 25,46,47\}$; 

$\Theta_{48,6,5}(S)$ = $\Theta_{48,6,5}(\{1,2,23, 25,46,47\})$ = $\{31,14,29, 7,22,5\}$ = $\{5,7,14, 22,29,31\}$; 

$\Theta_{48,6,6}(S)$ = $\Theta_{48,6,6}(\{1,2,23, 25,46,47\})$ = $\{37,26,11, 13,46,35\}$ = $\{11,13,26, 35,37,46\}$; 

$\Theta_{48,6,7}(S)$ = $\Theta_{48,6,7}(\{1,2,23, 25,46,47\})$ = $\{43,38,41, 19,22,17\}$ = $\{17,19,22, 38,41,43\}$; 

$\Rightarrow$  $\Theta_{48,6,i}(C_{48}(R))$ $\neq$ $C_{48}(X)$ for any $X$ for $i$ = 1 to 7 using Theorem \ref{ab14}. 
\\
$\Rightarrow$ $C_{48}(1,2,23)$ and $C_{48}(2,11,13)$ are not Type-2 isomorphic w.r.t. $m$ = 6 since $\gcd(48, 6)$ = 6.
 
\item [\rm (b1)] Let $R$ = $\{1,6,23\}$, $S$ = $R \cup (48-R)$ and $T$ = $\{6,11,13\}$. $6\in R,T$ and $\gcd(48, 2)$ = 2. Let $m$ = 2. We calculate $\Theta_{48,2,t}(C_{48}(1,6,23))$ for $t$ = 1 to $\frac{48}{\gcd(48, 2)}-1$ = 23. We have $\Theta_{48,2,t}(6)$ = 6 and using the calculations of $(a1)$, we get,

$\Theta_{48,2,1}(S)$ = $\{1,6,3, 25,27,42\}$; 

 $\Theta_{48,2,2}(S)$ = $\{3,5,6, 27,29,42\}$; 

$\Theta_{48,2,3}(S)$ = $\{5,6,7, 29,31,42\}$; 

$\Theta_{48,2,4}(S)$ = $\{6,7,9, 31,33,42\}$; 

$\Theta_{48,2,5}(S)$ = $\{6,9,11, 33,35,42\}$; 

$\Theta_{48,2,6}(S)$ = $\{6,11,13, 35,37,42\}$. 

This implies,  $\Theta_{48,2,i}(C_{48}(R))$ $\neq$ $C_{48}(X)$ for any $X$ for $i$ = 1 to 5 using Theorem \ref{ab14} and $\Theta_{48,2,6}(C_{48}(R))$ = $C_{48}(T)$ which implies, $C_{48}(R)$ $\cong$ $C_{48}(T)$.

Similarly, we get, 
$\Theta_{48,2,i+6t}(S)$ = $\Theta_{48,2,i}(S)$  for  $0 \leq i \leq 5$ and $0 \leq 2(i+6t) \leq 23$; 

$\Theta_{48,2,i+6t}(S)$ $\neq$ $C_{48}(X)$ for any $X$ for $1 \leq i \leq 5$ and $0 \leq 2(i+6t) \leq 23$ and    

$\Theta_{48,2,6}(C_{48}(1,6,23))$ = $C_{48}(6,11,13)$. $\Rightarrow$ $C_{48}(1,6,23)$ $\cong$ $C_{48}(6,11,13)$.

Also, $Ad_{48}(C_{48}(1,6,23))$ 
= $\{C_{48}(1,6,23), C_{48}(5,18,19), C_{48}(6,7,17), C_{48}(11,13,18)\}$.

$\Rightarrow$  $C_{48}(6,11,13) \notin Ad_{48}(C_{48}(1,6,23))$.

This implies, $C_{48}(1,6,23)$ and $C_{48}(6,11,13)$ are Type-2 isomorphic w.r.t. $m$ = 2. 
 
 Type-2 isomorphic circulant graphs $C_{48}(1,2,23)$ and $\Theta_{48,2,6}(C_{48}(1,2,23))$ = $C_{48}(2,11,13)$ as well as  $C_{48}(1,6,23)$ and $\Theta_{48,6,2}(C_{48}(1,6,23))$ = $C_{48}(6,11,13)$ are given in Figures 13 to 16.
 
 \item [\rm (b2)] Let $R$ = $\{1,6,23\}$, $S$ = $R \cup (48-R)$ and $T$ = $\{6,11,13\}$. $6\in R,T$ and $\gcd(48, 6)$ = 6. Let $m$ = 6. We calculate $\Theta_{48,6,t}(C_{48}(1,6,23))$ for $t$ = 1 to $\frac{48}{\gcd(48, 6)}-1$ = 7. We have $\Theta_{48,2,t}(6)$ = 6 and using the calculations of $(a2)$, we get,
 
 $\Theta_{48,6,1}(S)$ = $\{5,6,7, 29,31,42\}$;
   
 $\Theta_{48,6,2}(S)$ = $\{6,11,13, 35,37,42\}$; 
 
 $\Theta_{48,6,3}(S)$ = $\{6,17,19, 41,42,43\}$;
   
 $\Theta_{48,6,4}(S)$ = $\{1,6,23, 25,42,47\}$; 
 
 $\Theta_{48,6,5}(S)$ = $\{5,6,7, 29,31,42\}$;
   
 $\Theta_{48,6,6}(S)$ = $\{6,11,13, 35,37,42\}$; 
 
 $\Theta_{48,6,7}(S)$ = $\{6,17,19, 41,42,43\}$. 
 
 This implies,  $\Theta_{48,6,i}(C_{48}(R))$ $\neq$ $C_{48}(X)$ for any $X$ for $i$ = 1, 3, 5, 7 using Theorem \ref{ab14} and $C_{48}(1,6,23)$ $\cong$ $C_{48}(6,11,13)$ since  $\Theta_{48,6,6}(1,6,23, 25,42,47)$ = $\{6,11,13, 35,37,42\}$. 
 
 Also, $C_{48}(6,11,13) \notin Ad_{48}(C_{48}(1,6,23))$, see $(b1)$.
 
 This implies, $C_{48}(1,6,23)$ and $C_{48}(6,11,13)$ are Type-2 isomorphic w.r.t. $m$ = 6.  
\begin{figure}[ht]
	\centerline{\includegraphics[width=4.5in]{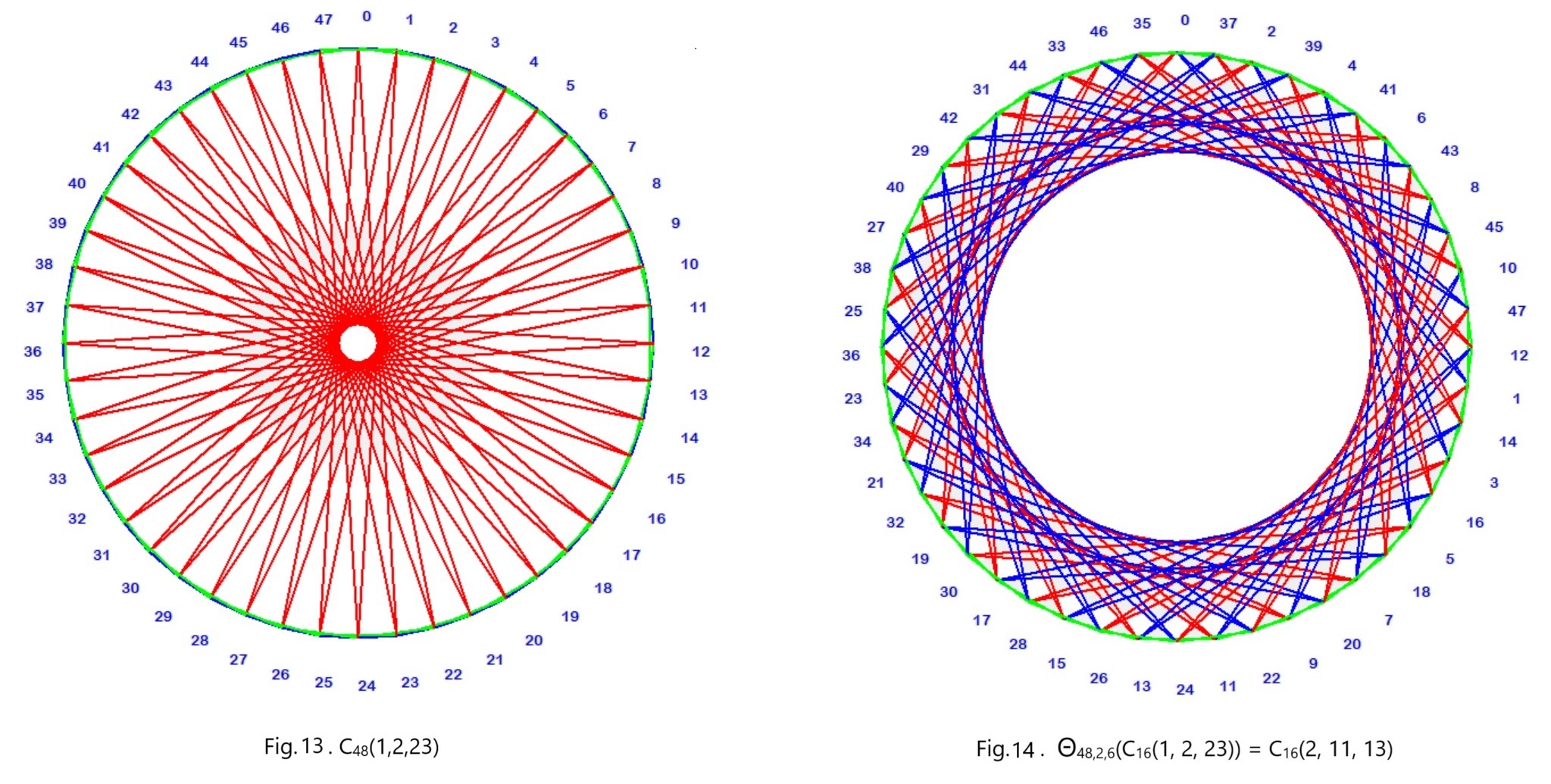}}
\end{figure}
\begin{figure}[ht]
	\centerline{\includegraphics[width=4.5in]{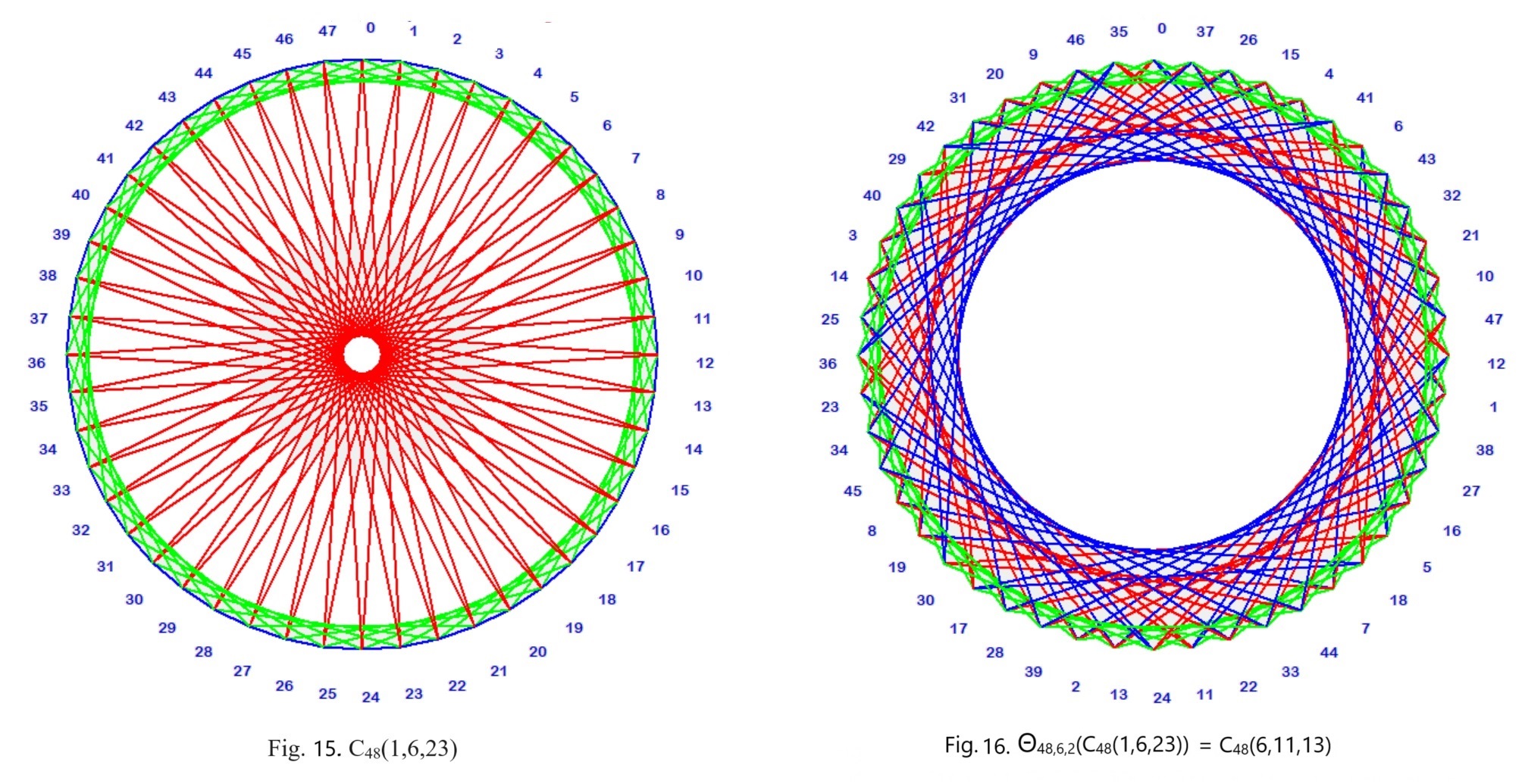}}
\end{figure}
 \hfill $\Box$
\end{enumerate}

\begin{prm}\quad \label{p5} {\rm Show that circulant graphs 
		\begin{enumerate}
		\item [\rm (a1)] $C_{96}(1,2,47)$ and $C_{96}(2,23,25)$ are Type-2 isomorphic w.r.t. $m$ = 2.
			
		\item [\rm (a2)] $C_{96}(1,2,47)$ and $C_{96}(2,23,25)$ are not Type-2 isomorphic w.r.t. $m$ = 6.
			
		\item [\rm (b1)] $C_{96}(1,6,47)$ and $C_{96}(6,23,25)$ are Type-2 isomorphic w.r.t. $m$ = 2.
			
		\item [\rm (b2)] $C_{96}(1,6,47)$ and $C_{96}(6,23,25)$ are Type-2 isomorphic w.r.t. $m$ = 6.
\end{enumerate}
	}
\end{prm}
\noindent
{\bf Solution.}\quad 
\begin{enumerate}
\item [\rm (a1)] Let $R$ = $\{1,2,47\}$, $S$ = $R \cup (96-R)$ and $T$ = $\{2,23,25\}$. $2\in R,T$ and $\gcd(96, 2)$ = 2. Let $m$ = 2. We calculate $\Theta_{96,2,t}(C_{96}(1,2,47))$ for $t$ = 1 to $\frac{96}{\gcd(96, 2)}-1$ = 47. It is easy to check that for $0 \leq i+12t \leq 47$ and $0 \leq i \leq 11$,
	
	$\Theta_{96,2,i+12t}(C_{96}(1,2,47))$ = $\Theta_{96,2,i}(C_{96}(1,2,47))$ and 
	
	$\Theta_{96,2,i}(C_{96}(1,2,47))$ $\neq$ $C_{96}(X)$ for any $X$;
	
	$\Theta_{96,2,12}(C_{96}(1,2,47))$ = $C_{96}(2,23,25)$;
	
	$\Theta_{96,2,2\times 12}(C_{96}(1,2,47))$ = $C_{96}(1,2,47)$; 
	
	$\Theta_{96,2,3\times 12}(C_{96}(1,2,47))$ = $C_{96}(2,23,25)$ and 
	
	$C_{96}(2,23,25) \notin Ad_{96}(C_{96}(1,2,47))$ since $Ad_{96}(C_{96}(1,2,47))$ = $\{C_{96}(1,2,47), C_{96}(5,10,43),$ 
	
	$C_{96}(7,14,41), C_{96}(11,22,37), C_{96}(13,26,35), C_{96}(17,31,34), C_{96}(19,29,38),C_{96}(23,25,46) \}$.
	
	This implies, $C_{96}(1,2,47)$ and $C_{96}(2,23,25)$ are Type-2 isomorphic w.r.t. $m$ = 2.   
	
	\item [\rm (a2)] Let $R$ = $\{1,2,47\}$, $S$ = $R \cup (96-R)$ and $T$ = $\{2,23,25\}$. $\gcd(96, 2)$ = 2 and $\gcd(96, 6)$ = 6. Let $m$ = 6. We calculate $\Theta_{96,6,t}(C_{96}(1,2,47))$ for $t$ = 1 to $\frac{96}{\gcd(96, 6)}-1$ = 15. It is easy to check  
	
	$\Theta_{96,6,8}(C_{96}(1,2,47))$ = $C_{96}(1,2,47)$ and 
	
	$\Theta_{96,6,i}(C_{96}(1,2,47))$ $\neq$ $C_{96}(X)$ for any $X$ for $i \neq 8$ and $1 \leq i \leq 15$.
	
	This implies, $C_{96}(1,2,47)$ and $C_{96}(2,23,25)$ are not Type-2 isomorphic w.r.t. $m$ = 6. 
	
	\item [\rm (b1)] Let $R$ = $\{1,6,47\}$, $S$ = $R \cup (96-R)$ and $T$ = $\{6,23,25\}$. $\gcd(96, 2)$ = 2 and $\gcd(96, 6)$ = 6. Let $m$ = 2. We calculate $\Theta_{96,2,t}(C_{96}(1,6,47))$ for $t$ = 1 to $\frac{96}{\gcd(96, 2)}-1$ = 47. As $\Theta_{96,2,t}(6)$ = 6 and using the calculations of $(a1)$, it is easy to verify that for $0 \leq i+12t \leq 47$ and $0 \leq i \leq 11$,
	
	$\Theta_{96,2,i+12t}(C_{96}(1,6,47))$ = $\Theta_{96,2,i}(C_{96}(1,6,47))$ and 
	
	$\Theta_{96,2,i}(C_{96}(1,6,47))$ $\neq$ $C_{96}(X)$ for any $X$;
	
	$\Theta_{96,2,12}(C_{96}(1,6,47))$ = $C_{96}(6,23,25)$;
	
	$\Theta_{96,2,2\times 12}(C_{96}(1,6,47))$ = $C_{96}(1,6,47)$; 
	
	$\Theta_{96,2,3\times 12}(C_{96}(1,6,47))$ = $C_{96}(6,23,25)$ and $C_{96}(6,23,25) \notin Ad_{96}(C_{96}(1,6,47))$.
	
	This implies, $C_{96}(1,2,47)$ and $C_{96}(2,23,25)$ are Type-2 isomorphic w.r.t. $m$ = 2.   
	
	\item [\rm (b2)] Let $R$ = $\{1,6,47\}$, $S$ = $R \cup (96-R)$ and $T$ = $\{6,23,25\}$. $\gcd(96, 6)$ = 6 and $6\in R,T$. Let $m$ = 6. We calculate $\Theta_{96,6,t}(C_{96}(1,6,47))$ for $t$ = 1 to $\frac{96}{\gcd(96, 6)}-1$ = 15. It is easy to check that 
	
	$\Theta_{96,6,i}(C_{96}(1,6,47))$ $\neq$ $C_{96}(X)$ for any $X$ for $i \neq 4, 8, 12$ and $0 \leq i \leq 15$,	
	
	$\Theta_{96,6,4}(C_{96}(1,6,47))$ = $C_{96}(6,23,25)$,
	
	$\Theta_{96,6,8}(C_{96}(1,6,47))$ = $C_{96}(1,6,47)$,
	
	$\Theta_{96,6,12}(C_{96}(1,6,47))$ = $C_{96}(6,23,25)$ and 	
	$C_{96}(6,23,25) \notin Ad_{96}(C_{96}(1,6,47))$.
	
	This implies, $C_{96}(1,6,47)$ and $C_{96}(6,23,25)$ are Type-2 isomorphic w.r.t. $m$ = 6.    \hfill $\Box$
\end{enumerate}

\begin{rem}\quad \label{r11} Following steps are used to establish isomorphism of Type-2 w.r.t. $r$ between circulant graphs $C_n(R)$ and $C_n(S)$. (i) $R$ $\neq$ $S$ and $|R| = |S| \geq 3$; (ii) $\exists$ $r\in R,S$ and $m > 1$ $\ni$ $m$ is a divisor of $\gcd(n, r)$ and for some $t$ $\ni$ $1 \leq t \leq \frac{n}{m} -1$, $\Theta_{n,m,t}(C_n(R))$ = $C_n(S)$ and (iii) $S$ $\neq$ $xR$ for all $x\in\varphi_n$ under arithmetic reflexive modulo $n$. 
\end{rem} 

\begin{rem} \label{r12} \quad While searching for possible value(s) of $t$ for which the transformed graph $\Theta_{n,m,t}(C_n(R))$ is circulant of the form $C_n(S)$ for some $S \subseteq [1, \frac{n}{2}],$ the calculation on $r_i$s which are integer multiples of $m$ need not be done  under the transformation $\Theta_{n,m,t}$ as there is no change in these $r_i$s where $m > 1$ is a divisor of $\gcd(n, r)$ and $r\in R$. Also, for a given circulant graph $C_n(R)$, w.r.t. different values of $m$, we may get different Type-2 isomorphic circulant graphs.
\end{rem}

\begin{note} \label{a13}  {\rm We consider $t$ = $0,1,\ldots,\frac{n}{m}-1$ in the transformation $\Theta_{n,m,t}$ on $C_n(R)$ since the length of a periodic cycle of period $r$ in $C_n(R)$ is $\frac{n}{m}$ (Theorem \ref{a1}) where $m > 1$ is a divisor of $\gcd(n,r)$ and $r\in R$.}
\end{note}

\section{ Type-2 isomorphic circulant graphs of orders 16 and 27}

Given a circulant graph $C_n(R)$ having isomorphic circulant graphs of Type-2 w.r.t. $m$, Remark \ref{r12} helps us to obtain more  isomorphic graphs which covers Type-2 w.r.t. $m$ as well as some Adam's isomorphic graphs of $C_n(R)$. In this section, we find all Type-2 isomorphic circulant graphs of orders 16 and 27. We show that there are 8 pairs of isomorphic circulant graphs of Type-2  on 16 points and 12 triples of isomorphic circulant graphs of Type-2 on 27 points. We start with circulant graphs of order 16.  

\begin{prm}\quad \label{p41} Show that there are 8 pairs of Type-2 isomorphic circulant graphs of order 16 and they are isomorphic of Type-2 w.r.t. $m$ = 2 only.
\end{prm}
\noindent
{\bf Solution.}\quad Here, $n$ = 16 and so the possible values of $m > 1$ $\ni$ $m$ is a divisor of $\gcd(n, r)$ = $\gcd(16, r)$ are $m$ = 2, 4, 8 and $\gcd(16, 2)$ = 2 = $\gcd(16, 6)$, $\gcd(16, 4)$ = 4, $\gcd(16, 8)$ = 8. We start with finding isomorphic circulant graphs of Type-2 w.r.t. $m$ = 2. In Problem \ref{p3}, we proved that circulant graphs 

(1) $C_{16}(1,2,7)$ and $C_{16}(2,3,5)$ are isomorphic of Type-2 w.r.t. $m$ = 2.

Using Remark \ref{r12} in this pair of isomorphic circulant graphs of Type-2 w.r.t. $m$ = 2, we get the following pairs of isomorphic circulant graphs of the form $C_{16}(R)$ which are either of Adam's or of Type-2 w.r.t. $m$ = 2. 		
\begin{enumerate}
	
	\item [\rm (2)]	$C_{16}(1,6,7),$ $C_{16}(3,5,6) = \Theta_{16,2,2}(C_{16}(1,6,7));$ 
	
	\item [\rm (3)]	$C_{16}(1,2,4,7),$ $C_{16}(2,3,4,5) = \Theta_{16,2,2}(C_{16}(1,2,4,7));$ 
	
	\item [\rm (4)]	$C_{16}(1,2,6,7),$ $C_{16}(2,3,5,6) = \Theta_{16,2,2}(C_{16}(1,2,6,7));$ 
	
	\item [\rm (5)]	$C_{16}(1,2,7,8),$ $C_{16}(2,3,5,8) = \Theta_{16,2,2}(C_{16}(1,2,7,8));$
	
	\item [\rm (6)]	$C_{16}(1,4,6,7),$ $C_{16}(3,4,5,6) = \Theta_{16,2,2}(C_{16}(1,4,6,7));$ 
	
	\item [\rm (7)]	$C_{16}(1,6,7,8),$ $C_{16}(3,5,6,8) = \Theta_{16,2,2}(C_{16}(1,6,7,8));$ 
	
	\item [\rm (8)]	$C_{16}(1,2,4,6,7),$ $C_{16}(2,3,4,5,6) = \Theta_{16,2,2}(C_{16}(1,2,4,6,7));$ 
	
	\item [\rm (9)]	$C_{16}(1,2,4,7,8),$ $C_{16}(2,3,4,5,8) = \Theta_{16,2,2}(C_{16}(1,2,4,7,8));$
	
	\item [\rm (10)]	$C_{16}(1,2,6,7,8),$ $C_{16}(2,3,5,6,8) = \Theta_{16,2,2}(C_{16}(1,2,6,7,8));$ 
	
	\item [\rm (11)]	$C_{16}(1,4,6,7,8),$ $C_{16}(3,4,5,6,8) = \Theta_{16,2,2}(C_{16}(1,4,6,7,8))$; and
	
	\item [\rm (12)]	$C_{16}(1,2,4,6,7,8)$, $C_{16}(2,3,4,5,6,8) = \Theta_{16,2,2}(C_{16}(1,2,4,6,7,8))$. 
\end{enumerate}
And among them the following 4 pairs are Adam's isomorphic.

\begin{enumerate}
	
	\item [\rm (4)]	$C_{16}(1,2,6,7),$ $C_{16}(3(1,2,6,7)) = C_{16}(5(1,2,6,7)) = C_{16}(2,3,5,6)$;
	
	\item [\rm (8)]	$C_{16}(1,2,4,6,7),$ $C_{16}(3(1,2,4,6,7)) = C_{16}(5(1,2,4,6,7)) = C_{16}(2,3,4,5,6)$; 
	
	\item [\rm (10)]	$C_{16}(1,2,6,7,8),$ $C_{16}(3(1,2,6,7,8)) = C_{16}(5(1,2,6,7,8)) = C_{16}(2,3,5,6,8)$; and
	
	\item [\rm (12)]	$C_{16}(1,2,4,6,7,8),$ $C_{16}(3(1,2,4,6,7,8)) = C_{16}(5(1,2,4,6,7,8)) = C_{16}(2,3,4,5,6,8)$.
\end{enumerate}
And all others are of Type-2 
isomorphic w.r.t. $m$ = 2 since 

		\begin{enumerate}
	\item [\rm (2)]	$\Theta_{16,2,2}(C_{16}(1,6,7))$ = $C_{16}(3,5,6)$. $\Rightarrow$ $C_{16}(1,6,7) \cong C_{16}(3,5,6)$. Also, 
	
	$Ad_{16}(C_{16}(1,6,7))$ = $\{C_{16}(1,6,7), C_{16}(2,3,5) \}$. $\Rightarrow$ $C_{16}(3,5,6) \notin Ad_{16}(C_{16}(1,6,7))$.
	
	$\Rightarrow$ $C_{16}(1,6,7)$ and $C_{16}(3,5,6)$ are Type-2 isomorphic w.r.t. $m$ = 2; 
	
	\item [\rm (3)]	$\Theta_{16,2,2}(C_{16}(1,2,4,7))$ = $C_{16}(2,3,4,5)$. $\Rightarrow$ $C_{16}(1,2,4,7) \cong C_{16}(2,3,4,5)$. Also, 
	
	$Ad_{16}(C_{16}(1,2,4,7))$ = $\{C_{16}(1,2,4,7), C_{16}(3,4,5,6) \}$. $\Rightarrow$ $C_{16}(2,3,4,5) \notin Ad_{16}(C_{16}(1,2,4,7))$.
	
	$\Rightarrow$ $C_{16}(1,2,4,7)$ and $C_{16}(2,3,4,5)$ are Type-2 isomorphic w.r.t. $m$ = 2;
	
	\item [\rm (5)]	$\Theta_{16,2,2}(C_{16}(1,2,7,8))$ = $C_{16}(2,3,5,8)$. $\Rightarrow$ $C_{16}(1,2,7,8) \cong C_{16}(2,3,5,8)$. Also, 
	
	$Ad_{16}(C_{16}(1,2,7,8))$ = $\{C_{16}(1,2,7,8), C_{16}(3,5,6,8) \}$. $\Rightarrow$ $C_{16}(2,3,5,8) \notin Ad_{16}(C_{16}(1,2,7,8))$.
	
	$\Rightarrow$ $C_{16}(1,2,7,8)$ and $C_{16}(2,3,5,8)$ are Type-2 isomorphic w.r.t. $m$ = 2;
	
	\item [\rm (6)]	$\Theta_{16,2,2}(C_{16}(1,4,6,7))$ = $C_{16}(3,4,5,6)$. $\Rightarrow$ $C_{16}(1,4,6,7) \cong C_{16}(3,4,5,6)$. Also, 
	
	$Ad_{16}(C_{16}(1,4,6,7))$ = $\{C_{16}(1,4,6,7), C_{16}(2,3,4,5) \}$. $\Rightarrow$ $C_{16}(3,4,5,6) \notin Ad_{16}(C_{16}(1,4,6,7))$.
	
	$\Rightarrow$ $C_{16}(1,4,6,7)$ and $C_{16}(3,4,5,6)$ are Type-2 isomorphic w.r.t. $m$ = 2;
	
	\item [\rm (7)]	$\Theta_{16,2,2}(C_{16}(1,6,7,8))$ = $C_{16}(3,5,6,8)$. $\Rightarrow$ $C_{16}(1,6,7,8) \cong C_{16}(3,5,6,8)$. Also, 
	
	$Ad_{16}(C_{16}(1,6,7,8))$ = $\{C_{16}(1,6,7,8), C_{16}(2,3,5,8) \}$. $\Rightarrow$ $C_{16}(3,5,6,8) \notin Ad_{16}(C_{16}(1,6,7,8))$.
	
	$\Rightarrow$ $C_{16}(1,6,7,8)$ and $C_{16}(3,5,6,8)$ are Type-2 isomorphic w.r.t. $m$ = 2;
	
	\item [\rm (9)]	$\Theta_{16,2,2}(C_{16}(1,2,4,7,8))$ = $C_{16}(2,3,4,5,8)$. $\Rightarrow$ $C_{16}(1,2,4,7,8) \cong C_{16}(2,3,4,5,8)$. Also, 	
	
	$Ad_{16}(C_{16}(1,2,4,7,8))$ = $\{C_{16}(1,2,4,7,8), C_{16}(3,4,5,6,8) \}$. 
	\\
	$\Rightarrow$ $C_{16}(2,3,4,5,8) \notin Ad_{16}(C_{16}(1,2,4,7,8))$.
	
	$\Rightarrow$ $C_{16}(1,2,4,7,8)$ and $C_{16}(2,3,4,5,8)$ are Type-2 isomorphic w.r.t. $m$ = 2 and
	
	\item [\rm (11)] $\Theta_{16,2,2}(C_{16}(1,4,6,7,8))$ = $C_{16}(3,4,5,6,8)$. $\Rightarrow$ $C_{16}(1,4,6,7,8) \cong C_{16}(3,4,5,6,8)$.  
	
	Also, $Ad_{16}(C_{16}(1,4,6,7,8))$ = $\{C_{16}(1,4,6,7,8), C_{16}(2,3,4,5,8) \}$. 
	
	$\Rightarrow$ $C_{16}(3,4,5,6,8) \notin Ad_{16}(C_{16}(1,4,6,7,8))$.
	
	$\Rightarrow$ $C_{16}(1,4,6,7,8)$ and $C_{16}(3,4,5,6,8)$ are Type-2 isomorphic w.r.t. $m$ = 2.  
\end{enumerate}

On the other hand, even though the following 3 pairs of circulant graphs are isomorphic, they are not Type-2 isomorphic because of the following reasons. 
\begin{enumerate}
	\item [\rm (13)]	$C_{16}(1,4,7)$ $\cong$ $C_{16}(3,4,5)$ since $ \Theta_{16,4,1}(C_{16}(1,4,7)) = C_{16}(3,4,5)$ and $C_{16}(3(1,4,7))$ = $C_{16}(3,4,5)$. Also, $m$ = 4 = $\gcd(16, 4)$. 
	
	\item [\rm (14)] $C_{16}(1,7,8)$ $\cong$ $C_{16}(3,5,8)$ and the two circulant graphs are Adam's isomorphic since  $C_{16}(3(1,7,8))$ = $C_{16}(3,5,8)$. Here, $\gcd(n, r)$ = $\gcd(16, 8)$ = 8.	
	
	\item [\rm (15)]	$C_{16}(1,4,7,8)$ $\cong$ $C_{16}(3,4,5,8)$ since $\Theta_{16,4,1}(C_{16}(1,4,7,8)) = C_{16}(3,4,5,8)$ and $C_{16}(3(1,4,7,8))$ = $C_{16}(3,4,5,8)$. Also, $\gcd(n, r)$ = $\gcd(16, 4)$ = 4.
\end{enumerate}

Thus there are 8 pairs of Type-2 isomorphic circulant graphs of order 16 and each pair is isomorphic of Type-2 w.r.t. $m$ = 2 only. \hfill $\Box$   

Next we find all possible Type-2 isomorphic circulant graphs of  order 27. We show that there are 12 triples of Type-2 isomorphic circulant graphs of order 27 and each triple is isomorphic of Type-2 w.r.t. $m$ = 3. We start with showing circulant graphs 

(1) $C_{27}(1,3,8,10)$, $C_{27}(2,3,7,11)$ and $C_{27}(3,4,5,13)$ are isomorphic of Type-2 w.r.t. $m$ = 3.  

\begin{prm}\quad \label{p6} {\rm Show that  $C_{27}(1,3,8,10)$, $C_{27}(2,3,7,11)$ and $C_{27}(3,4,5,13)$ are isomorphic of Type-2 w.r.t. $m$ = 3. }
\end{prm}
\noindent
{\bf Solution.}\quad Let $R_1$ = $\{1,3,8,10\}$, $R_2$ = $\{2,3,7,11\}$ and $R_3$ = $\{3,4,5,13\}$. Here, $n$ = 27 and $r$ = 3 so that $3\in R_1, R_2, R_3$ and $\gcd(n, r)$ = $\gcd(27, 3)$ = 3. Using the definition of $\Theta_{n,m,t}$, we get,

$\Theta_{27,3,1}(C_{27}(1,3,8,10))$ = $\Theta_{27,3,1}(C_{27}(1,3,8,10, 17,19,24,26))$ = $C_{27}(\Theta_{27,3,1}(1,3,8,10, 17,19,24,26))$ 

 \hspace{3.5cm} = $C_{27}(4,3,14,13, 23,22,24,5)$ = $C_{27}(3,4,5,13)$ and 

$\Theta_{27,3,2}(C_{27}(1,3,8,10))$ = $C_{27}(\Theta_{27,3,2}(1,3,8,10, 17,19,24,26))$ = $C_{27}(7,3,20,16, 2,25,24,11)$

 \hspace{3.5cm} = $C_{27}(2,3,7,11)$.
 
 $\Rightarrow$ $C_{27}(1,3,8,10)$ $\cong$ $C_{27}(3,4,5,13)$ and  $C_{27}(1,3,8,10)$ $\cong$ $C_{27}(2,3,7,11)$. 
 
 $\Rightarrow$ $C_{27}(1,3,8,10)$ $\cong$ $C_{27}(2,3,7,11)$  $\cong$ $C_{27}(3,4,5,13)$. 

Also, $Ad_{27}(C_{27}(1,3,8,10))$ = $Ad_{27}(C_{27}(1,3,8,10, 17,19,24,26))$ 

\hfill = $\{C_{27}(x(1,3,8,10, 17,19,24,26)): x = 1,2,4,5,7,8,10,11,13,14,16,17,19,20,22,23,25,26\}$ 

\hspace{.9cm} = $\{C_{27}(1,3,8,10, 17,19,24,26),$ $C_{27}(2,6,7,11, 16,20,25),$ $C_{27}(4,5,12,13, 14,15,22,23)\}$ 

\hspace{.9cm} = $\{C_{27}(1,3,8,10),$ $C_{27}(2,6,7,11),$ $C_{27}(4,5,$ $12,13)\}$.

 $\Rightarrow$ $C_{27}(2,3,7,11),C_{27}(3,4,5,13)\notin Ad_{27}($ $C_{27}(1,3,8,10))$.
 
 $\Rightarrow$ $C_{27}(1,3,8,10)$, $C_{27}(2,3,7,11)$ and $C_{27}(3,4,5,13)$ are isomorphic of Type-2 w.r.t. $m$ = 3. \hfill $\Box$
 
  In the next problem, using Remark \ref{r12} in the above Type-2 isomorphic circulant graphs, we find all Type-2 isomorphic circulant graphs of order 27.
  
 \begin{prm}\quad \label{p7} Show that there are 12 triples of Type-2 isomorphic circulant graphs of order 27 and they are all isomorphic of Type-2 w.r.t. $m$ = 3.
 \end{prm}
 \noindent
 {\bf Solution.}\quad Here, $n$ = 27. The possible values of $m > 1$ $\ni$ $m$ is a divisor of $\gcd(n, r)$ = $\gcd(27, r)$ are $m$ = 3, 9. $\gcd(27, 3)$ = 3 = $\gcd(27, 6)$ = $\gcd(27, 12)$ and $\gcd(27, 9)$ = 9. We start with finding isomorphic circulant graphs of Type-2 w.r.t. $m$ = 3. In Problem \ref{p6}, we proved that circulant graphs 
 
 (1) $C_{27}(1,3,8,10)$, $C_{27}(2,3,7,11)$ and $C_{27}(3,4,5,13)$ are isomorphic of Type-2 w.r.t. $m$ = 3. 
 
 Using Remark \ref{r12} in this triple of isomorphic circulant graphs of Type-2 w.r.t. $m$ = 3, we get the following triples of isomorphic circulant graphs of the form $C_{27}(R)$ and each of these triples are either Adam's isomorphic circulant graphs or Type-2 isomorphic circulant graphs w.r.t. $m$ = 3. 	
 
 \begin{enumerate}
 	
	\item [\rm (2)]	$C_{27}(1,6,8,10)$, $C_{27}(2,6,7,11)$, $C_{27}(4,5,6,13)$; 
			
	\item [\rm (3)]	$C_{27}(1,8,10,12)$, $C_{27}(2,7,11,12)$, $C_{27}(4,5,12,13)$;
		
	\item [\rm (4)]	$C_{27}(1,3,6,8,10)$, $C_{27}(2,3,6,7,11)$, $C_{27}(3,4,5,6,13)$; 
		
	\item [\rm (5)]	$C_{27}(1,3,8,9,10)$, $C_{27}(2,3,7,9,11)$, $C_{27}(3,4,5,9,13)$; 
		
	\item [\rm (6)]	$C_{27}(1,3,8,10,12)$, $C_{27}(2,3,7,11,12)$, $C_{27}(3,4,5,12,13)$;
		
	\item [\rm (7)]	$C_{27}(1,6,8,9,10)$, $C_{27}(2,6,7,9,11)$, $C_{27}(4,5,6,9,13)$;
		
	\item [\rm (8)]	$C_{27}(1,6,8,10,12)$, $C_{27}(2,6,7,11,12)$, $C_{27}(4,5,6,12,13)$; 
		
	\item [\rm (9)]	$C_{27}(1,8,9,10,12)$, $C_{27}(2,7,9,11,12)$, $C_{27}(4,5,9,12,13)$; 
		
	\item [\rm (10)]	$C_{27}(1,3,6,8,9,10)$, $C_{27}(2,3,6,7,9,11)$, $C_{27}(3,4,5,6,9,13)$;
		
	\item [\rm (11)]	$C_{27}(1,3,6,8,10,12)$, $C_{27}(2,3,6,7,11,12)$, $C_{27}(3,4,5,6,12,13)$;
		
	\item [\rm (12)]	$C_{27}(1,3,8,9,10,12)$, $C_{27}(2,3,7,9,11,12)$, $C_{27}(3,4,5,9,12,13)$; 
		
	\item [\rm (13)] $C_{27}(1,6,8,9,10,12)$, $C_{27}(2,6,7,9,11,12)$, $C_{27}(4,5,6,9,12,13)$. 
	\end{enumerate}	

	And among them the following triple is of Adam's isomorphic circulant graphs.

	(11) $C_{27}(1,3,6,8,10,12)$, $C_{27}(2,3,6,7,11,12) \cong C_{27}(2(1,3,6,8,10,12))$, 
	
	\hspace{3.8cm}  $C_{27}(3,4,5,6,12,13) \cong C_{27}(4(1,3,6,8,10,12))$.	
	
	\vspace{.1cm}	
	And all others are of Type-2 isomorphic w.r.t. $r = 3$ since 
	
	\begin{enumerate}
		\item [\rm (2)]	$\theta_{27,3,1}(C_{27}(1,6,8,10))$ = $C_{27}(4,5,6,13)$ and $\theta_{27,3,2}(C_{27}(1,6,8,10))$ = $C_{27}(2,6,7,11)$.
		
	 $\Rightarrow$ $C_{27}(1,6,8,10)$ $\cong$ $C_{27}(4,5,6,13)$ $\cong$ $C_{27}(2,6,7,11)$. 
		
	Also,	$Ad_{27}(C_{27}(1,6,8,10))$ = $\{C_{27}(1,6,8,10), C_{27}(2,7,11,12), C_{27}(3,4,5,13)\}$. 
		
	 $\Rightarrow$  $C_{27}(2,6,7,11),C_{27}(4,5,6,13)\notin Ad_{27}(C_{27}(1,6,8,10))$.
	 
	$\Rightarrow$ $C_{27}(1,6,8,10)$, $C_{27}(4,5,6,13)$,  $C_{27}(2,6,7,11)$ are isomorphic of Type-2 w.r.t. $m$ = 6.
		
	\item [\rm (3)]	$\theta_{27,3,1}(C_{27}(1,8,10,12))$ = $C_{27}(4,5,12,13)$ and $\theta_{27,3,2}(C_{27}(1,8,10,12))$ = $C_{27}(2,7,11,12)$.
		
	$\Rightarrow$ $C_{27}(1,8,10,12)$ $\cong$ $C_{27}(4,5,12,13)$ $\cong$ $C_{27}(2,7,11,12)$. 
		
	Also,	$Ad_{27}(C_{27}(1,8,10,12))$ = $\{C_{27}(1,8,10,12), C_{27}(2,3,7,11), C_{27}(4,5,6,13)\}$. 
		
	$\Rightarrow$  $C_{27}(2,7,11,12),C_{27}(4,5,12,13)\notin Ad_{27}(C_{27}(1,8,10,12))$.
		
	$\Rightarrow$ $C_{27}(1,8,10,12)$, $C_{27}(4,5,12,13)$,  $C_{27}(2,7,11,12)$ are isomorphic of Type-2 w.r.t. $m$ = 3.
		
\item [\rm (4)]	$\theta_{27,3,1}(C_{27}(1,3,6,8,10))$ = $C_{27}(3,4,5,6,13)$ and $\theta_{27,3,2}(C_{27}(1,3,6,8,10))$ = $C_{27}(2,3,6,7,11)$.

$\Rightarrow$ $C_{27}(1,3,6,8,10)$ $\cong$ $C_{27}(3,4,5,6,13)$ $\cong$ $C_{27}(2,3,6,7,11)$. 

Also,	$Ad_{27}(C_{27}(1,3,6,8,10))$ = $\{C_{27}(1,3,6,8,10), C_{27}(2,6,7,11,12), C_{27}(3,4,5,12,13)\}$. 

$\Rightarrow$  $C_{27}(2,3,6,7,11),C_{27}(3,4,5,6,13)\notin Ad_{27}(C_{27}(1,3,6,8,10))$.

$\Rightarrow$ $C_{27}(1,3,6,8,10)$, $C_{27}(3,4,5,6,13)$,  $C_{27}(2,3,6,7,11)$ are Type-2 isomorphic w.r.t. $m$ = 3.

\item [\rm (5)]	$\theta_{27,3,1}(C_{27}(1,3,8,9,10))$ = $C_{27}(3,4,5,9,13)$ and $\theta_{27,3,2}(C_{27}(1,3,8,9,10))$ = $C_{27}(2,3,7,9,11)$.

$\Rightarrow$ $C_{27}(1,3,8,9,10)$ $\cong$ $C_{27}(3,4,5,9,13)$ $\cong$ $C_{27}(2,3,7,9,11)$. 

Also,	$Ad_{27}(C_{27}(1,3,8,9,10))$ = $\{C_{27}(1,3,8,9,10), C_{27}(2,6,7,9,11), C_{27}(4,5,9,12,13)\}$. 

$\Rightarrow$  $C_{27}(2,3,7,9,11),C_{27}(3,4,5,9,13)\notin Ad_{27}(C_{27}(1,3,8,9,10))$.

$\Rightarrow$ $C_{27}(1,3,8,9,10)$, $C_{27}(3,4,5,9,13)$,  $C_{27}(2,3,7,9,11)$ are Type-2 isomorphic w.r.t. $m$ = 3.

\item [\rm (6)]	$\theta_{27,3,1}(C_{27}(1,3,8,10,12))$ = $C_{27}(3,4,5,12,13)$ and $\theta_{27,3,2}(C_{27}(1,3,8,10,12))$ = $C_{27}(2,3,7,11,12)$.

$\Rightarrow$ $C_{27}(1,3,8,10,12)$ $\cong$ $C_{27}(3,4,5,12,13)$ $\cong$ $C_{27}(2,3,7,11,12)$. 

Also,	$Ad_{27}(C_{27}(1,3,8,10,12))$ = $\{C_{27}(1,3,8,10,12),  C_{27}(2,3,6,7,11), C_{27}(4,5,6,12,13)\}$. 

$\Rightarrow$  $C_{27}(2,3,7,11,12), C_{27}(3,4,5,12,13)\notin Ad_{27}(C_{27}(1,3,8,10,12))$.
\\
$\Rightarrow$ $C_{27}(1,3,8,10,12)$, $C_{27}(3,4,5,12,13)$,  $C_{27}(2,3,7,11,12)$ are Type-2 isomorphic w.r.t. $m$ = 3.

\item [\rm (7)]	$\theta_{27,3,1}(C_{27}(1,6,8,9,10))$ = $C_{27}(4,5,6,9,13)$ and $\theta_{27,3,2}(C_{27}(1,6,8,9,10))$ = $C_{27}(2,6,7,9,11)$.

$\Rightarrow$ $C_{27}(1,6,8,9,10)$ $\cong$ $C_{27}(4,5,6,9,13)$ $\cong$ $C_{27}(2,6,7,9,11)$. 

Also,	$Ad_{27}(C_{27}(1,6,8,9,10))$ = $\{C_{27}(1,6,8,9,10), C_{27}(2,7,9,11,12), C_{27}(3,4,5,9,13)\}$. 

$\Rightarrow$  $C_{27}(2,6,7,9,11),C_{27}(4,5,6,9,13)\notin Ad_{27}(C_{27}(1,6,8,9,10))$.

$\Rightarrow$ $C_{27}(1,6,8,9,10)$, $C_{27}(4,5,6,9,13)$,  $C_{27}(2,6,7,9,11)$ are Type-2 isomorphic w.r.t. $m$ = 3.

\item [\rm (8)]	$\theta_{27,3,1}(C_{27}(1,6,8,10,12))$ = $C_{27}(4,5,6,12,13)$ and $\theta_{27,3,2}(C_{27}(1,6,8,10,12))$ = $C_{27}(2,6,7,11,12)$.

$\Rightarrow$ $C_{27}(1,6,8,10,12)$ $\cong$ $C_{27}(4,5,6,12,13)$ $\cong$ $C_{27}(2,6,7,11,12)$. 

Also,	$Ad_{27}(C_{27}(1,6,8,10,12))$ = $\{C_{27}(1,6,8,10,12), C_{27}(2,3,7,11,12), C_{27}(3,4,5,6,13)\}$. 

$\Rightarrow$  $C_{27}(2,6,7,11,12), C_{27}(4,5,6,12,13)\notin Ad_{27}(C_{27}(1,6,8,10,12))$.
\\
$\Rightarrow$ $C_{27}(1,6,8,10,12)$, $C_{27}(4,5,6,12,13)$,  $C_{27}(2,6,7,11,12)$ are Type-2 isomorphic w.r.t. $m$ = 3.

\item [\rm (9)] $\theta_{27,3,1}(C_{27}(1,8,9,10,12))$ = $C_{27}(4,5,9,12,13)$ and $\theta_{27,3,2}(C_{27}(1,8,9,10,12))$ = $C_{27}(2,7,9,11,12)$.

$\Rightarrow$ $C_{27}(1,8,9,10,12)$ $\cong$ $C_{27}(4,5,9,12,13)$ $\cong$ $C_{27}(2,7,9,11,12)$. 

Also,	$Ad_{27}(C_{27}(1,8,9,10,12))$ = $\{C_{27}(1,8,9,10,12), C_{27}(2,3,7,9,11), C_{27}(4,5,6,9,13)\}$. 

$\Rightarrow$  $C_{27}(2,7,9,11,12), C_{27}(4,5,9,12,13)\notin Ad_{27}(C_{27}(1,8,9,10,12))$.
\\
$\Rightarrow$ $C_{27}(1,8,9,10,12)$, $C_{27}(4,5,9,12,13)$,  $C_{27}(2,7,9,11,12)$ are Type-2 isomorphic w.r.t. $m$ = 3.
		
\item [\rm (10)] $\theta_{27,3,1}(C_{27}(1,3,6,8,9,10))$ = $C_{27}(3,4,5,6,9,13)$ and 

 $\theta_{27,3,2}(C_{27}(1,3,6,8,9,10))$ = $C_{27}(2,3,6,7,9,11)$.

$\Rightarrow$ $C_{27}(1,3,6,8,9,10)$ $\cong$ $C_{27}(3,4,5,6,9,13)$ $\cong$ $C_{27}(2,3,6,7,9,11)$. Also,

$Ad_{27}(C_{27}(1,3,6,8,9,10))$ = $\{C_{27}(1,3,6,8,9,10), C_{27}(2,6,7,9,11,12), C_{27}(3,4,5,9,12,13)\}$. 

$\Rightarrow$  $C_{27}(2,3,6,7,9,11),C_{27}(3,4,5,6,9,13)\notin Ad_{27}(C_{27}(1,3,6,8,9,10))$.

$\Rightarrow$ $C_{27}(1,3,6,8,9,10)$, $C_{27}(3,4,5,6,9,13)$,  $C_{27}(2,3,6,7,9,11)$ are Type-2 isomorphic w.r.t. $m$ = 3.
	
\item [\rm (12)] $\theta_{27,3,1}(C_{27}(1,3,8,9,10,12))$ = $C_{27}(3,4,5,9,12,13)$ and

 $\theta_{27,3,2}(C_{27}(1,3,8,9,10,12))$ = $C_{27}(2,3,7,9,11,12)$.

$\Rightarrow$ $C_{27}(1,3,8,9,10,12)$ $\cong$ $C_{27}(3,4,5,9,12,13)$ $\cong$ $C_{27}(2,3,7,9,11,12)$. Also,

$Ad_{27}(C_{27}(1,3,8,9,10,12))$ = $\{C_{27}(1,3,8,9,10,12), C_{27}(2,3,6,7,9,11), C_{27}(4,5,6,9,12,13)\}$. 

$\Rightarrow$  $C_{27}(2,3,7,9,11,12), C_{27}(3,4,5,9,12,13)\notin Ad_{27}(C_{27}(1,3,8,9,10,12))$.

$\Rightarrow$ $C_{27}(1,3,8,9,10,12)$, $C_{27}(3,4,5,9,12,13)$,  $C_{27}(2,3,7,9,11,12)$ are Type-2 isomorphic w.r.t. $m$ = 3.
		
\item [\rm (13)] $\theta_{27,3,1}(C_{27}(1,6,8,9,10,12))$ = $C_{27}(4,5,6,9,12,13)$ and 

$\theta_{27,3,2}(C_{27}(1,6,8,9,10,12))$ = $C_{27}(2,6,7,9,11,12)$.

$\Rightarrow$ $C_{27}(1,6,8,9,10,12)$ $\cong$ $C_{27}(4,5,6,9,12,13)$ $\cong$ $C_{27}(2,6,7,9,11,12)$. Also, 

	$Ad_{27}(C_{27}(1,6,8,9,10,12))$ = $\{C_{27}(1,6,8,9,10,12), C_{27}(2,3,7,9,11,12), C_{27}(3,4,5,6,9,13)\}$. 

$\Rightarrow$  $C_{27}(2,6,7,9,11,12), C_{27}(4,5,6,9,12,13)\notin Ad_{27}(C_{27}(1,6,8,9,10,12))$.

$\Rightarrow$ $C_{27}(1,6,8,9,10,12)$, $C_{27}(4,5,6,9,12,13)$,  $C_{27}(2,6,7,9,11,12)$ are Type-2 isomorphic w.r.t. $m$ = 3.
\end{enumerate}	

On the other hand, even though the following 3 circulant graphs,

	$C_{27}(1,8,9,10)$, $C_{27}(2,7,9,11)$ and $C_{27}(4,5,9,13)$
 are Adam's isomorphic and thereby they are not Type-2 isomorphic since 
 	
  $C_{27}(2(1,8,9,10))$ = $C_{27}(2,7,9,11)$ and $C_{27}(4(1,8,9,10))$ = $C_{27}(4,5,9,13)$. 
  
  Thus, there are 12 triples of Type-2 isomorphic circulant graphs of order 27 and all these triples of isomorphic circulant graphs are Type-2 w.r.t. m = 3. \hfill $\Box$

\section{Results on Type-2 isomorphic circulant graphs w.r.t. $m$ = 2}

 In this section, we present some important result on isomorphic circulant graphs of Type-2 w.r.t. $m$ = 2. We begin with  definition of the {\it symmetric equidistance condition} w.r.t. a vertex of $C_n(R)$.

\begin{definition}{\rm \cite{vc13}}\quad \label{a14} For a set $R = \{r_1,r_2,\ldots,r_k\}$ in $C_n(R)$, the {\it symmetric equidistance condition} w.r.t. $v_i$ is that  $v_{i}$ is adjacent to $v_{i+j}$  if and only if $v_{i}$ is adjacent to $v_{n-j+i}$, using subscript arithmetic modulo $n,$ $0 \leq i,j \leq n-1$. 
\end{definition} 

\begin{theorem}{\rm \cite{vc13} \quad \label{ab14} For a set $R$ = $\{r_1,r_2,\ldots,r_k\}  \subseteq [1, \frac{n}{2}],$ $r\in R$, $m > 1$ is a divisor of $\gcd(n, r)$, $1 \leq i \leq k$ and $0 \leq t \leq \frac{n}{m}-1,$ $\Theta_{n,m,t}(C_n(R))$ = $C_n(S)$ for some $S \subseteq [1, \frac{n}{2}]$ if and only if $\Theta_{n,m,t}(C_n(R))$ satisfies the symmetric equidistance condition w.r.t. $v_0.$}
\end{theorem}
\begin{proof}\quad Obviously, every circulant graph $C_n(R)$ for a set $R$ = $\{r_1,r_2,$ $\ldots,r_k\}$ satisfies the symmetric equidistance condition w.r.t. $v_0.$ 
	
	Conversely, let $\Theta_{n,m,t}(C_n(R))$ satisfy the symmetric equidistance condition w.r.t. $v_0.$ We have to prove that $\Theta_{n,m,t}(C_n(R))$ = $C_n(S)$ for some $S \subseteq [1, \frac{n}{2}].$ In $C_n(R),$ $v_q v_{q+s} \in E(\Gamma_j)$ if and only if $v_q v_{q+s} v_{q+2s} ... v_q \in \Gamma_j,$ using subscript arithmetic modulo $n$ where $\Gamma_j$ is the periodic cycle of period $s$ and of length $\frac{n}{\gcd(n, s)}$ in $C_n(R)$ and contains $v_j,$ $0 \leq j \leq$ $m-1,$ $r,s \in R$ and $0 \leq q \leq n-1.$ By the transformation $\Theta_{n,m,t},$ for some $t,$ each $\Gamma_j$ simply rotates $tm$ positions (points) w.r.t. $\Gamma_{j-1}$ in the regular $n$-gon in the clockwise direction and thereby the vertices of each $V_j$ = $V(\Gamma_j)$ are taking the positions of the vertices of $V_j$ only, $0 \leq tm \leq \frac{n}{2}$ and $0 \leq j \leq m-1.$ Thus by this transformation the consecutive edges, except the last one, of any periodic cycle of $C_n(R)$ connecting different $\Gamma_j$'s (not belonging to $\Gamma_j$), starting from any vertex of $\Gamma_0,$ change uniformly, $j$ = $0,1, \ldots, m-1.$ Since the transformed graph satisfies the symmetric equidistance condition w.r.t. $v_0$ (and hence w.r.t. each vertices of $\Gamma_0$), the resultant changes on the last edges of each periodic cycle (each starting from $\Gamma_0$) of $C_n(R)$ also follows the same uniform (or periodic) movements and thereby all the transformed cycles corresponding to the periodic cycles of $C_n(R)$ are also periodic cycles in the transformed graph $\Theta_{n,m,t}(C_n(R))$ and thereby the resultant graph $\Theta_{n,m,t}(C_n(R))$ = $C_n(S)$ for some $S \subseteq [1, \frac{n}{2}].$ Hence the result. 
\end{proof}
  
The following results on isomorphic circulant graphs of Type-2 w.r.t. $r$ = 2 are obtained in \cite{v17,v20}. 

\begin{theorem}{\rm \cite{v20}}\quad \label{a17b} {\rm For $n,s \in \mathbb{N}$, $1 \leq 2s-1 \leq 2n-1$, $R$ = $\{2,2s-1,4n-(2s-1)\}$ and $S$ = $\{ 2,2n-(2s-1),2n+2s-1 \}$,  $\Theta_{8n,2,n}(C_{8n}(R))$ = $C_{8n}(S)$ = $\Theta_{8n,2,3n}(C_{8n}(R))$ and $\Theta_{8n,2,n}(C_{8n}(S))$ = $C_{8n}(R)$ = $\Theta_{8n,2,3n}$ $(C_{8n}(S)).$ }
\end{theorem}
\begin{proof}\quad Using the definition of $\Theta_{n,r,t},$ we get, $\Theta_{8n,2,n}(2)$ = $2,$ $\Theta_{8n,2,n}(2s$ $-1)$ = $2n+2s-1,$ $\Theta_{8n,2,n} (4n-(2s-1))$ = $6n-(2s-1),$ $\Theta_{8n,2,n}(4n+(2s-1))$ = $6n+2s-1,$ $\Theta_{8n,2,n}(8n-(2s-1))$ = $2n-(2s-1),$ $\Theta_{8n,2,n}(8n-2)$ = $8n-2,$ $1 \leq 2s-1 \leq 2n-1$. Also, $8n- \Theta_{8n,2,n}(4n-(2s-1))$ = $\Theta_{8n,2,n}(2s-1),$ $8n- \Theta_{8n,2,n}(8n-2)$ = $\Theta_{8n,2,n}(2)$ and $8n- \Theta_{8n,2,n}(8n-(2s-1))$ = $\Theta_{8n,2,n}(4n+(2s-1))$. Thus $\Theta_{8n,2,n}(C_{8n}(R))$ for $R$ = $\{2,2s-1, 4n-(2s-1)\}$ satisfies the symmetric equidistance condition w.r.t.  $v_0$ and $\Theta_{8n,2,n}(C_{8n}(R))$ = $C_{8n}(S)$ where $S$ = $\{2,2n-(2s-1), 2n+2s-1\},$ $1 \leq 2s-1 \leq 2n-1$ and $n,s \in \mathbb{N}$. Similarly, we can prove $\Theta_{8n,2,3n}(C_{8n}(R))$ = $C_{8n}(S)$ and $\Theta_{8n,2,n}(C_{8n}(S))$ = $C_{8n}(R)$ = $\Theta_{8n,2,3n}(C_{8n}(S))$ where $R$ = $\{2,2s-1, 4n-(2s-1)\},$ $S$ = $\{2, 2n-(2s-1), 2n+2s-1\},$ $1 \leq 2s-1 \leq 2n-1$ and $n,s \in \mathbb{N}$. Hence the result. 
\end{proof}

\begin{theorem}{\rm \cite{v20}}\quad \label{a17c} {\rm For $n \geq 2$, $1 \leq 2s-1 \leq 2n-1$, $n \neq 2s-1$, $R$ = $\{2,2s-1, 4n-(2s-1)\}$ and $S$ = $\{ 2,$ $2n-(2s-1),$ $2n+2s-1 \},$ $\Theta_{8n,2,n}(C_{8n}(R))$ = $C_{8n}(S)$ = $\Theta_{8n,2,3n}(C_{8n}(R)),$ $\Theta_{8n,2,n}(C_{8n}(S))$ = $C_{8n}(R)$ = $\Theta_{8n,2,3n}(C_{8n}(S))$ and circulant graphs $C_{8n}(R)$ and $C_{8n}(S)$ are Type-2 isomorphic  w.r.t. $m$ = 2. When $n$ = $2s-1$, the two circulant graphs are the same. }
\end{theorem}
\begin{proof}\quad When $R$ = $\{ 2,2s-1,4n-(2s-1)\}$ and $S$ = $\{ 2,2n-(2s-1),2n+2s-1 \},$ $\Theta_{8n,2,n}(C_{8n}(R))$ = $C_{8n}(S)$ = $\Theta_{8n,2,3n}(C_{8n}(R))$ and $\Theta_{8n,2,n}(C_{8n}(S))$ = $C_{8n}(R)$ = $\Theta_{8n,2,3n}(C_{8n}(S)),$ using Theorem \ref{a17b} where $1 \leq 2s-1 \leq 2n-1$ and $n,s \in \mathbb{N}$. The two circulant graphs $C_{8n}(R)$ and $C_{8n}(S)$ are the same when $2s-1$ = $2n-(2s-1)$ and $4n-(2s-1)$ = $2n+2s-1,$ $1 \leq 2s-1 \leq 2n-1$. That is when $n$ = $2s-1$, the graphs become $C_{8n}(2,n,3n),$ $n \in \mathbb{N}.$ When $n$ = $s$ = $1$, the graphs become $C_8(1,2,3)$. For $n \neq 2s-1$, $n,s\in \mathbb{N}$ and $n \geq 2$, the set of jump sizes of the two graphs are different and thereby the two circulant graphs $C_{8n}(R)$ and $C_{8n}(S)$ are not same.   \\
\noindent
{\it \bf {Claim:}} $C_{8n}(R)$ and $C_{8n}(S)$ are of Type-2 isomorphic w.r.t. $r$ = 2 when $R$ = $\{2,2s-1,4n-(2s-1)\},$ $S$ = $\{2,2n-(2s-1),2n+2s-1\},$ $n \neq 2s-1,$ $n \geq 2$ and $n,s \in \mathbb{N}$.
	
	If not, graphs $C_{8n}(R)$ and $C_{8n}(S)$ are Adam's isomorphic. Then, there exists $s' \in \mathbb{N}$ such that $\gcd(8n,2s'-1)$ = 1 and $C_{8n}((2s'-1)R)$ = $C_{8n}(S)$ which implies, $(2s'-1)\{ 2,2s-1,4n-(2s-1),4n+2s-1,$ $8n-(2s-1), 8n-2 \}$ = $\{ 2,2n-(2s-1),$ $2n+2s-1, 6n-(2s-1), 6n+2s-1,$ $8n-2 \}$, using arithmetic modulo $8n,$ $1 \leq 2s'-1 \leq 8n-1$. Now, $2(2s'-1),$ $(8n-2)(2s'-1), 2+8np_1$ and $8n-2+8np_2$ are the only even numbers in the two sets for some $p_1, p_2 \in \mathbb{N}_0$. The following two cases arise.  
	
	\noindent
	{\bf Case (i):} $2(2s'-1)$ = $2+8np_1,$ $1 \leq 2s'-1 \leq 8n-1$ and $p_1 \in \mathbb{N}_0$.  
	
	In this case, the possible values of $p_1$ are $0$ or $1$ since $1 \leq 2s'-1 \leq 8n-1$ and $s',n \in \mathbb{N}$. When $p_1$ = $0,$ $2s'-1$ = $1$ and so we get the same graph only. When $p_1$ = $1,$ $2s'-1$ = $4n+1$. In this case, the two graphs are the same. The jump sizes of the circulant graph corresponding to Adam's isomorphism when $2s'-1$ = $4n+1$ are given in Table 6. See Table 6. \\ 
	\noindent
	\textbf{Case (ii):} $2(2s'-1)$ = $8n-2+8np_2,$ $1 \leq 2s'-1 \leq 8n-1$ and $p_2 \in \mathbb{N}_0$.  
	
	In this case, the possible values of $p_2$ are $0$ or $1.$ When $p_2$ = $0,$ $2s'-1$ = $4n-1$. When $p_2$ = $1,$ $2s'-1$ = $8n-1.$ In the case $2s'-1 = 4n-1$ as well as when $2s'-1$ = $8n-1$ (which is same as when $2s'-1 = 8n-(8n-1)$ = $1$), we get the same circulant graph $C_{8n}(R)$ only. The jump sizes of the circulant graph corresponding to Adam's isomorphism when $2s'-1$ = $4n-1$ as well as when $2s'-1$ = $8n-(4n-1)$ = $4n+1$ are given in Table 6. See Table 6.

	\begin{table}
		\caption{{\small Calculation of $s(2s'-1)$ under arithmetic modulo $8n.$}}
		\begin{center}
			\scalebox{0.85}{
				\begin{tabular}{||c||*{5}{c|}c||}\hline \hline
					\backslashbox{Multiplier \\ $2s'-1$}{Jump size \\ $s$}
					& 2 & $2s-1$ & $4n-(2s-1)$ & $4n+2s-1$ & $8n-(2s-1)$ & $8n-2$\\\hline \hline
					& &  &   &  &  &  \\
					$4n-1$ & $8n-2$ & $4n-(2s-1)$ & $2s-1$ & $8n-(2s-1)$ & $4n+2s-1$ & 2  \\\hline
					& &  &   &  &  &  \\
					$4n+1$ & 2 & $4n+2s-1$ & $8n-(2s-1)$ & $2s-1$ & $4n-(2s-1)$ & $8n-2$ \\\hline
					& &  &   &  &  &  \\
					$8n-1$ & $8n-2$ & $8n-(2s-1)$ & $4n+2s-1$ & $4n-(2s-1)$ & $2s-1$ & 2  \\\hline \hline 
			\end{tabular}}
		\end{center}
	\end{table}
	
	Thus, the isomorphic circulant graphs $C_{8n}(R)$ and $C_{8n}(S)$ for $R$ = $\{ 2,$ $2s-1,$ $4n-(2s-1) \},$ $S$ = $\{ 2,$ $2n-(2s-1),$ $2n+2s-1 \},$ $1 \leq 2s-1 \leq 2n-1,$ $n \geq 2$  and  $n \neq 2s-1$ (and thereby $R \neq S,$ $2n-(2s-1) \neq 2s-1$ and $2n+2s-1 \neq 4n-(2s-1),$ $1 \leq 2s-1 \leq 2n-1$) are different, not of Type-1 and $\Theta_{8n,2,n}(C_{8n}(R))$ = $C_{8n}(S).$ And hence the result follows. 
\end{proof}

\begin{theorem} \label{a17d} {\rm Let $n \geq 2,$ $k \geq 3,$ $1 \leq 2s-1 \leq 2n-1,$ $n \neq 2s-1,$ $p_1,p_2,\ldots,p_{k-2} \in \mathbb{N}$ $\ni$ $\gcd(p_1,p_2,\ldots,p_{k-2})$ = 1,  $R$ = $\{ 2s-1,$ $4n-2s+1,$ $2p_1,$ $2p_2,$ $\ldots,$ $2p_{k-2} \}$, $S$ = $\{2n-(2s-1),$ $2n+2s-1,$ $2p_1,2p_2,\ldots,2p_{k-2}\}$, $2y\in R,S$ and $\gcd(4n, 2y)$ = 2. Then, for a given set of values of $n,y,p_1,p_2,\ldots,p_{k-2}$ and $s$, $C_{8n}(R)$ and $C_{8n}(S)$ are isomorphic of either Adam's or Type-2 w.r.t. $2y$. Moreover, for all such possible values of $p_1,p_2,\ldots,p_{k-2}$ and $y$, the set $\{ C_n(S) = \Theta_{8n,2,n}(C_{8n}(R)): p_1,p_2,\ldots,p_{k-2} \in \mathbb{N}\}$ contains all possible isomorphic circulant graphs of $C_{8n}(R)$ of Type-2 w.r.t. 2. }
\end{theorem}
\begin{proof}\quad Using Theorem \ref{a17c}, $\Theta_{8n,2,n}(C_{8n}(R))$ = $C_{8n}(S)$ = $\Theta_{8n,2,3n}(C_{8n}(R)),$ $\Theta_{8n,2,n}(C_{8n}(S))$ = $C_{8n}(R)$ = $\Theta_{8n,2,3n}(C_{8n}(S))$ and circulant graphs $C_{8n}(R)$ and $C_{8n}(S)$ are Type-2 isomorphic  w.r.t. $r = 2$ when $R$ = $\{2,2s-1,4n-(2s-1)\},$ $S$ = $\{ 2, 2n-(2s-1), 2n+2s-1 \},$ $n$ $\neq$ $2s-1,$ $n$ $\geq$ $2$ and $n \in \mathbb{N}.$ And using Remark \ref{r12}, for a given set of values of $n,s,p_1,p_2,\ldots,p_{k-2}$ and $y$,   $\Theta_{8n,2,n}(C_{8n}(R))$ = $C_{8n}(S)$ = $\Theta_{8n,2,3n}(C_{8n}(R)),$ $\Theta_{8n,2,n}(C_{8n}(S))$ = $C_{8n}(R)$ = $\Theta_{8n,2,3n}(C_{8n}(S))$ and the isomorphic circulant graphs $C_{8n}(R)$ and $C_{8n}(S)$ are either Adam's or Type-2 w.r.t. $m$ = 2 when $R$ = $\{ 2s-1,$ $4n-2s+1,$ $2p_1,$ $2p_2,$ $\ldots,$ $2p_{k-2} \}$, $S$ = $\{2n-(2s-1),2n+2s-1, 2p_1,2p_2,\ldots,2p_{k-2}\},$ $n \geq 2,$ $k \geq 3,$ $1 \leq 2s-1 \leq 2n-1,$ $n \neq 2s-1,$ $\gcd(4n, 2y)$ = $m$ = 2,  $\gcd(p_1,p_2,\ldots,p_{k-2})$ = 1, $2y\in R,S$ and $n,s, y,p_1,p_2,\ldots,p_{k-2} \in \mathbb{N}$.
	
Moreover, for all such possible values of $p_1,p_2,\ldots,p_{k-2}$ and $y$, the set $\{ C_n(S) = \Theta_{8n,2,n}(C_{8n}(R)):$ $p_1,p_2,\ldots,p_{k-2} \in \mathbb{N}\}$ contains all possible isomorphic circulant graphs of $C_{8n}(R)$ of Type-2 w.r.t. $m$ = 2.  Hence we get the result. 
\end{proof}

\begin{cor}\quad \label{a17e} {\rm Let $n \geq 2,$ $k \geq 3,$ $1 \leq 2s-1 \leq 2n-1,$ $n \neq 2s-1,$ $p_1,p_2,\ldots,p_{k-2} \in \mathbb{N}$ $\ni$ $\gcd(p_1,p_2,\ldots,p_{k-2})$ = 1,  $R$ = $\{ 2s-1,$ $4n-2s+1,$ $2p_1,$ $2p_2,$ $\ldots,$ $2p_{k-2} \}$, $S$ = $\{2n-(2s-1),$ $2n+2s-1,$ $2p_1,2p_2,\ldots,2p_{k-2}\}$, $2y\in R,S$ and $\gcd(4n, 2y)$ = 2. Then, for a given set of values of $n,y,p_1,p_2,\ldots,p_{k-2}$ and $s$, $C_{8n}(R)$ and $C_{8n}(S)$ are either Adam's isomorphic or without CI-property.  \hfill  $\Box$}
\end{cor}
When the order of the circulant graph is $2^n$, corresponding to the above results, we obtain the following, $n \in \mathbb{N}$.

\begin{cor}{\rm \label{a17f} Let $n \geq 4,$ $k \geq 3,$ $1 \leq 2s-1 \leq 2^{n-2}-1,$ $n,s\in \mathbb{N}$, $R$ = $\{2, 2s-1, 2^{n-1}-(2s-1)\}$ and $S$ = $\{2, 2^{n-2}-(2s-1), 2^{n-2}+2s-1\}$. Then $C_{2^n}(R)$ and $C_{2^n}(S)$ are isomorphic of Type-2 w.r.t. $m$ = 2. }
\end{cor}
\begin{proof} The proof follows from Theorem \ref{a17c}. 
\end{proof}

\begin{cor}{\rm \label{a17fb} Let $n \geq 2,$ $k \geq 3,$ $1 \leq 2s-1 \leq 2^{n-2}-1$, $p_1,p_2,\ldots,p_{k-2} \in \mathbb{N}$ $\ni$ $\gcd(p_1,p_2,\ldots,p_{k-2})$ = 1,  $R$ = $\{ 2s-1,$ $2^{n-1}-(2s-1),$ $2p_1,$ $2p_2,$ $\ldots,$ $2p_{k-2} \}$, $S$ = $\{2^{n-2}-(2s-1), 2^{n-2}+2s-1,$ $2p_1,2p_2,\ldots,2p_{k-2}\}$, $2y\in R,S$ and $\gcd(4n, 2y)$ = 2. Then, for a given set of values of $n,y,p_1,p_2,\ldots,p_{k-2}$ and $s$, $C_{2^n}(R)$ and $C_{2^n}(S)$ are either Adam's isomorphic or Type-2 isomorphic w.r.t. $2y$. Moreover, for all such possible values of $p_1,p_2,\ldots,p_{k-2}$ and $y$, the set $\{ C_{2^n}(S) = \Theta_{{2^n},2,{2^{n-3}}}(C_{2^n}(R)): p_1,p_2,\ldots,p_{k-2} \in \mathbb{N}\}$ contains all possible isomorphic circulant graphs of $C_{2^n}(R)$ of Type-2 w.r.t. $r$ = $2y$. }
\end{cor}
\begin{proof} The proof follows from Theorem \ref{a17d}. 
\end{proof}

\begin{cor} \label{a17g} {\rm Let $n \geq 2,$ $k \geq 3,$ $1 \leq 2s-1 \leq 2^{n-2}-1$, $p_1,p_2,\ldots,p_{k-2} \in \mathbb{N}$ $\ni$ $\gcd(p_1,p_2,\ldots,p_{k-2})$ = 1,  $R$ = $\{ 2s-1,$ $2^{n-1}-(2s-1),$ $2p_1,$ $2p_2,$ $\ldots,$ $2p_{k-2} \}$, $S$ = $\{2^{n-2}-(2s-1), 2^{n-2}+2s-1,$ $2p_1,2p_2,\ldots,2p_{k-2}\}$, $2y\in R,S$ and $\gcd(4n, 2y)$ = 2. Then, for a given set of values of $n,y,p_1,p_2,\ldots,p_{k-2}$ and $s$, $C_{2^n}(R)$ and $C_{2^n}(S)$ are isomorphic of either Adam's or  without CI-property. \hfill $\Box$}
\end{cor}

For a given circulant graph $C_n(R)$ with $R$ = $\{2,2s-1, 2s'-1\}$, the following theorem gives conditions on $n, t$ and jump sizes for which the transformed graph $\Theta_{n,2,t}(C_n(R))$ and $C_n(R)$ are isomorphic of Type-2 w.r.t. $m$ = 2. Proof given here is different from the one given in \cite{v20}.

\begin{theorem}{\rm \cite{v20}}\quad \label{a18} {\rm For $n \geq 2,$ $1 \leq 2s-1 < 2s'-1 \leq [\frac{n}{2}],$ $0 \leq t \leq [\frac{n}{2}],$ $R$ = $\{2,2s-1, 2s'-1\}$ and $n,s,s'\in \mathbb{N},$ if $\Theta_{n,2,t}(C_n(R))$ and $C_n(R)$ are  isomorphic circulant graphs of Type-2 w.r.t. $m$ = 2 for some $t,$ then $n \equiv 0~(mod ~ 8),$ $2s-1+2s'-1$ = $\frac{n}{2},$ $2s-1 \neq \frac{n}{8},$ $t$ = $\frac{n}{8}$ or $\frac{3n}{8},$ $1 \leq 2s-1 \leq \frac{n}{4}$ and $n \geq 16.$ In particular, $\Theta_{8n,2,n}(C_{8n}(R))$ = $C_{8n}(S)$ = $\Theta_{8n,2,3n}(C_{8n}(R))$ and $C_{8n}(R)$ and $C_{8n}(S)$ are Type-2 isomorphic w.r.t. $m$ = 2 when $R$ = $\{2, 2s-1, 4n-(2s-1)\}$, $S$ = $\{2, 2n-(2s-1), 2n+2s-1\}$, $n\geq 2$ and $n,s\in \mathbb{N}$. }
\end{theorem}
	
\begin{proof}\quad Using the definition of $\Theta_{n,r,t},$ we have $m$ = $2$ is a divisor of $\gcd(n,r)$ = $\gcd(n,2)$ which implies, $n$ is even. Let $n$ = $2a$ and so the circulant graph $C_n(R)$ = $C_{2a}(R)$ and $\Theta_{n,r,t}$ = $\Theta_{2a,2,t}$ where $R$ = $\{ 2, 2s-1, 2s'-1 \}$, $0\leq t \leq \left[\frac{n}{2}\right]$ and $a,n,s,s',x\in \mathbb{N}$. Also, $\Theta_{8x,2,x}(\{2, 2s-1, 4x-(2s-1),$ $4x+2s-1, 8x-(2s-1)$, $8x-2\})$ = $\{2, 2s-1+2x, 6x-(2s-1)$, $6x+2s-1, 2x-(2s-1), 8x-2\}$, $\Theta_{8x,2,2x}(\{2, 2s-1, 4x-(2s-1)$, $4x+2s-1, 8x-(2s-1), 8x-2\})$ = $\{2, 2s-1+4x, 8x-(2s-1), 2s-1$, $4x-(2s-1), 8x-2\}$ and $\Theta_{8x,2,3x}(\{2, 2s-1, 4x-(2s-1), 4x+2s-1$, $8x-(2s-1), 8x-2\})$ = $\{2, 2s-1+6x, 2x-(2s-1)$, $2x+2s-1, 6x-(2s-1), 8x-2\}$. This implies, $\Theta_{8x,2,ix}(C_{8x}(R))$ satisfies the symmetric equidistance condition for $i$ = 0,1,2,3; $\Theta_{8x,2,2x}(C_{8x}(R))$ = $C_{8x}(R)$ = $\Theta_{8x,2,0}(C_{8x}(R))$ and $\Theta_{8x,2,x}(C_{8x}(R))$ = $C_{8x}(S)$ = $\Theta_{8x,2,3x}(C_{8x}(R))$ when $R$ = $\{2, 2s-1,$ $4x-(2s-1)\}$ and $S$ = $\{2, 2x-(2s-1), 2x+2s-1\}$,  $s,x\in\mathbb{N}$.

Consider the transformed graph under the transformation $\Theta_{2a,2,t}$ acting on $C_{2a}(R)$ for $R$ = $ \{ 2,$ $2s-1$, $2s'-1 \}$. Let Figure 17 corresponds to $C_{2a}(R)$, $V(C_n(R))$ = $\{v_0, v_1, \ldots,$ $v_{2a-1} \},$ cycles $C$ = $(v_0 v_1 \ldots v_{2a-1}),$ $C_0$ = $(v_0 v_2 \ldots v_{2a-2})$ and $C_1$ = $(v_1 v_3 \ldots v_{2a-1})$. $C_{2a}(R)$ may contain cycle $C$ and only to prove the theorem we consider it, even if it doesn't exist in $C_{2a}(R)$. Let $u_1$ = $v_{2s-1},$ $u_2$ = $v_{2s'-1},$ $u_3$ = $v_{2a-(2s'-1)}$ and $u_4$ = $v_{2a-(2s-1)}$. Clearly, $v_0\in V(C_0)$ and $u_1,u_2,u_3,u_4\in C_1$.  Let $d(u,v)$ denote the distance between the vertices $u$ and $v$ measured on the regular $2a$-gon with vertices $w_0, w_1, \ldots, w_{2a-1}$ which coincide with the vertices of $C_{2a}(R)$. In $C_{2a}(R)$, $d(v_0,u_1)$ = $d(v_0,u_4)$ = $2s-1$ and $d(v_0,u_2)$ = $d(v_0,u_3)$ = $2s'-1$ which implies $d(u_1,u_2)$ = $d(u_4,u_3)$. Thus, 
	
(1) Under Figure 17: $d(v_0,u_1)$ = $d(v_0,u_4)$,  $d(v_0,u_2)$ = $d(v_0,u_3)$ and $d(u_1,u_2)$ = $d(u_4,u_3)$.	
	
When $\Theta_{2a,2,t}$ acts on $C_{2a}(R),$ cycle $C_0$ doesn't change but $C_1$ simply rotates uniformly for sucessive values of $t$ so that the relative positions of $u_1, u_2, u_3$ and $u_4$ remain the same in $C_1$. Also for a particular value of $t$ if $\Theta_{2a,2,t}(C_{2a}(R))$ = $C_{2a}(S)$ for some $S$, then two out of the 4 vertices $u_1,u_2,u_3$ and $u_4$ lie to the right of $v_0 v_a$ and the other two to the left and $\Theta_{2a,2,t}(C_{2a}(R))$ satisfies the symmetric equidistance condition w.r.t. $v_0$ by Theorem \ref{ab14}, $0 \leq t \leq \frac{2a}{\gcd(2a, 2)} = a$. Under the above conditions, 3 cases corresponding to Figures 18, 19 and 20 arise due to the rotation of $C_1$ when $\Theta_{2a,2,t}(C_{2a}(R))$ = $C_{2a}(T)$ for different values of $t$, $1 \leq t \leq  a$. In each case, we get at the most one possible circulant graph of the form $C_{2a}(T)$ for some $T \subseteq [1, a]$ using the symmetric equidistance condition. See Figures 17 to 20. 
	
When the transformed graph is circulant in its representation, then using the symmetric equidistance condition,   similar to the case corresponding to Figure 17, we get the following. 
	
	(2) Under Figure 18: $d(v_0,u_4)$ = $d(v_0,u_3)$, $d(v_0,u_1)$ = $d(v_0,u_2)$ and $d(u_4,u_1)$ = $d(u_3,u_2)$; 
	
	(3) Under Figure 19: $d(v_0,u_3)$ = $d(v_0,u_2)$, $d(v_0,u_4)$ = $d(v_0,u_1)$ and $d(u_3,u_4)$ = $d(u_1,u_2)$; and 
	
	(4) Under Figure 20: $d(v_0,u_2)$ = $d(v_0,u_1)$, $d(v_0,u_3)$ = $d(v_0,u_4)$ and $d(u_2,u_3)$ = $d(u_4,u_1)$.  

Graph $C_{2a}(R)$ is vertex-transitive and the vertices of $C_1$ move uniformly for consecutive values of $t$, $t$ = 0,1,.... And so if $t_1$ is the smallest positive integer value of $t \ni \theta_{2a,2,t_1}(C_{2a}(R))$ = $C_{2a}(S_1)$ for some $S_1$, then also $\theta_{2a,2,it_1}(C_{2a}(R))$ = $C_{2a}(S_i)$ for some $S_i \subseteq [1, a]$ and satisfies the symmetric equidistance condition for $i$ = 1,2,.... When the graph under Figure 19 satisfies the symmetric equidistance condition, we get,  $t$ = $a$ and  $\theta_{2a,2,a}(C_{2a}(R))$ = $C_{2a}(R)$. 

When $\Theta_{n,2,t}(C_n(R))$ and $C_n(R)$ are  isomorphic circulant graphs of Type-2 w.r.t. $r$ = 2 for some $t,$ then $\Theta_{n,2,t}(C_n(R))$ has to take the form corresponding to Figure 18 or 20. These two cases correspond to $\Theta_{8x,2,x}(C_{8x}(R))$ = $C_{8x}(S)$  and $\Theta_{8x,2,3x}(C_{8x}(R))$ = $C_{8x}(S)$ as it is shown at the begining where $R$ = $\{2, 2s-1, 4x-(2s-1)\}$ and $S$ = $\{2, 2x-(2s-1), 2x+2s-1\}$, $s,x\in\mathbb{N}$. By Theorem \ref{a17c}, the two circulant graphs $C_{8x}(R)$ and  $C_{8x}(S)$ are Type-2 w.r.t. $r$ = 2 when $R$ = $\{2, 2s-1, 4x-(2s-1)\}$ and $S$ = $\{2, 2x-(2s-1), 2x+2s-1\}$, $x \geq 2$, $x \neq 2s-1$, $2s'-1$ = $4x-(2s-1)$ and $s,s',x\in\mathbb{N}$. Hence the result follows. 
\end{proof} 	
\begin{center}
\scalebox{0.82}{ \begin{tikzpicture}[thick, set/.style = { circle, minimum size = .02cm}]
	
	\node (0) at (-4,11.5) [circle,fill=red!30]{$v_{0}$};
	\node (1) at (-2.75,10.75) [circle][circle,fill=blue!30]{};
	\node (2) at (-2.25,10) [circle,fill=blue!30][label=2:]{};
	\node (3) at (-2,9) [circle,fill=blue!30][label=3:]{$u_{1}$};
	\node (4) at (-2,7.75) [circle,fill=blue!30][label=4:]{};
	\node (5) at (-2.5,6.5) [circle,fill=blue!30][label=5:]{$u_{2}$};
	\node (6) at (-3,5.5) [circle,fill=blue!30][label=6:]{};		
	
	\node (7) at (-4,4.75)[circle,fill=red!30][label=7:]{$u_{a}$};
	\node (8) at (-4.75,5.5) [circle,fill=blue!30][label=8:]{};
	\node (9) at (-5.5,6.5) [circle,fill=blue!30][label=9:]{$u_{3}$};
	\node (10) at (-6,7.75) [circle,fill=blue!30][label=10:]{};
	\node (11) at (-6,9) [circle,fill=blue!30][label=11:]{$u_{4}$};
	\node (12) at (-5.75,10) [circle,fill=blue!30][label=12:]{};
	\node (13) at (-5.25,10.75) [circle,fill=blue!30][label=13:]{};

\draw[->] [brown](-2.5,10.4) to [out=120,in=220] (-1.9,10.6);
	\node (14) at (-1.6,10.7) [label=14:]{$C_1$};
	
	\draw [dashed](1) -- (2);
	\draw [dashed](2) -- (3);
	\draw [dashed](3) -- (4);
	\draw [dashed](4) -- (5);
	\draw [dashed](5) -- (6);
	\draw [dashed](6) -- (8);
	\draw [dashed](8) -- (9);
	\draw [dashed](9) -- (10);
	\draw [dashed](10) -- (11);
	\draw [dashed](11) -- (12);
	\draw [dashed](12) -- (13);
	\draw [dashed](13) -- (1);
	
	\draw (0) -- (3);
	\draw (0) -- (5);
	\draw (0) -- (9);
	\draw (0) -- (11);
	
	\draw [red][dashed](0) -- (7);
	
	\node (00) at (4,11.5) [circle,fill=red!30]{$v_{0}$};
	\node (01) at (5.25,10.75) [circle][circle,fill=blue!30]{$u_{4}$};
	\node (02) at (5.75,10) [circle,fill=blue!30][label=02:]{};
	\node (03) at (6,9) [circle,fill=blue!30][label=03:]{};
	\node (04) at (6,7.75) [circle,fill=blue!30][label=04:]{};
	\node (05) at (5.5,6.5) [circle,fill=blue!30][label=05:]{};
	\node (06) at (5,5.5) [circle,fill=blue!30][label=06:]{$u_{1}$};		
	
	\node (07) at (4,4.75)[circle,fill=red!30][label=07:]{$u_{a}$};
	\node (08) at (3,5.5) [circle,fill=blue!30][label=08:]{};
	\node (09) at (2.5,6.5) [circle,fill=blue!30][label=09:]{$u_{2}$};
	\node (010) at (2,7.75) [circle,fill=blue!30][label=010:]{};
	\node (011) at (2,9) [circle,fill=blue!30][label=011:]{};
	\node (012) at (2.25,10) [circle,fill=blue!30][label=012:]{$u_{3}$};
	\node (013) at (2.75,10.75) [circle,fill=blue!30][label=013:]{};
	
	\draw [dashed](01) -- (02);
	\draw [dashed](02) -- (03);
	\draw [dashed](03) -- (04);
	\draw [dashed](04) -- (05);
	\draw [dashed](05) -- (06);
	\draw [dashed](06) -- (08);
	\draw [dashed](08) -- (09);
	\draw [dashed](09) -- (010);
	\draw [dashed](010) -- (011);
	\draw [dashed](011) -- (012);
	\draw [dashed](012) -- (013);
	\draw [dashed](013) -- (01);
	
	\draw (00) -- (01);
	\draw (00) -- (06);
	\draw (00) -- (09);
	\draw (00) -- (012);
	
	\draw [red][dashed](00) -- (07);
	
	\end{tikzpicture}}\\
	\vspace{0.2cm} 
	\hspace{.2cm} Figure $17$ \hspace{5.2cm} Figure $18$  \label{fig 9}			
\end{center}
\begin{center}
\scalebox{0.82}{	\begin{tikzpicture}[thick,
	set/.style = { circle, minimum size = .02cm}]
	
	\node (0) at (-4,4.5) [circle,fill=red!30]{$v_{0}$};
	\node (1) at (-2.75,3.75) [circle][circle,fill=blue!30]{};
	\node (2) at (-2.25,3) [circle,fill=blue!30][label=2:]{$u_{3}$};
	\node (3) at (-2,2) [circle,fill=blue!30][label=3:]{};
	\node (4) at (-2,.75) [circle,fill=blue!30][label=4:]{$u_{4}$};
	\node (5) at (-2.5,-.5) [circle,fill=blue!30][label=5:]{};
	\node (6) at (-3,-1.5) [circle,fill=blue!30][label=6:]{};		
	
	\node (7) at (-4,-2.25)[circle,fill=red!30][label=7:]{$u_{a}$};
	\node (8) at (-4.75,-1.5) [circle,fill=blue!30][label=8:]{};
	\node (9) at (-5.5,-.5) [circle,fill=blue!30][label=9:]{};
	\node (10) at (-6,.75) [circle,fill=blue!30][label=10:]{$u_{1}$};
	\node (11) at (-6,2) [circle,fill=blue!30][label=11:]{};
	\node (12) at (-5.75,3)[circle,fill=blue!30][label=12:]{$u_{2}$};
	\node (13) at (-5.25,3.75) [circle,fill=blue!30][label=13:]{};
	
	\draw [dashed](1) -- (2);
	\draw [dashed](2) -- (3);
	\draw [dashed](3) -- (4);
	\draw [dashed](4) -- (5);
	\draw [dashed](5) -- (6);
	\draw [dashed](6) -- (8);
	\draw [dashed](8) -- (9);
	\draw [dashed](9) -- (10);
	\draw [dashed](10) -- (11);
	\draw [dashed](11) -- (12);
	\draw [dashed](12) -- (13);
	\draw [dashed](13) -- (1);
	
	\draw (0) -- (2);
	\draw (0) -- (4);
	\draw (0) -- (10);
	\draw (0) -- (12);
	
	\draw [red][dashed](0) -- (7);
	
	\node (00) at (4,4.5) [circle,fill=red!30]{$v_{0}$};
	\node (01) at (5.25,3.75) [circle][circle,fill=blue!30]{};
	\node (02) at (5.75,3) [circle,fill=blue!30][label=02:]{$u_{2}$};
	\node (03) at (6,2) [circle,fill=blue!30][label=03:]{};
	\node (04) at (6,.75) [circle,fill=blue!30][label=04:]{};
	\node (05) at (5.5,-.5)[circle,fill=blue!30][label=05:]{$u_{3}$};
	\node (06) at (5,-1.5) [circle,fill=blue!30][label=06:]{};		
	\node (07) at (4,-2.25)[circle,fill=red!30][label=07:]{$u_{a}$};
	\node (08) at (3,-1.5) [circle,fill=blue!30][label=08:]{$u_{4}$};
	\node (09) at (2.5,-.5) [circle,fill=blue!30][label=09:]{};
	\node (010) at (2,.75) [circle,fill=blue!30][label=010:]{};
	\node (011) at (2,2) [circle,fill=blue!30][label=011:]{};
	\node (012) at (2.25,3) [circle,fill=blue!30][label=012:]{};
	\node (013) at (2.75,3.75) [circle,fill=blue!30][label=013:]{$u_{1}$};
	
	\draw [dashed](01) -- (02);
	\draw [dashed](02) -- (03);
	\draw [dashed](03) -- (04);
	\draw [dashed](04) -- (05);
	\draw [dashed](05) -- (06);
	\draw [dashed](06) -- (08);
	\draw [dashed](08) -- (09);
	\draw [dashed](09) -- (010);
	\draw [dashed](010) -- (011);
	\draw [dashed](011) -- (012);
	\draw [dashed](012) -- (013);
	\draw [dashed](013) -- (01);
	
	\draw (00) -- (02);
	\draw (00) -- (05);
	\draw (00) -- (08);
	\draw (00) -- (013);
	
	\draw [red][dashed](00) -- (07);
	
	\end{tikzpicture} }\\
	\vspace{0.2cm} 
	\hspace{.15cm}  Figure $19$ \hspace{5cm} Figure $20$  \label{fig 9b}			
\end{center}
\begin{cor}{\rm \cite{v20}}\quad \label{a18b} {\rm For $n \geq 2,$ $k \geq 3,$ $1 \leq 2s-1 < 2s'-1 \leq [\frac{n}{2}],$ $0 \leq t \leq [\frac{n}{2}],$ $\gcd(n, r)$ = $m$ = 2, $r\in R$, $R$ = $\{2s-1,$ $2s'-1, 2p_1,2p_2,...,2p_{k-2}\}$, $\gcd(p_1,p_2,...,p_{k-2})$ = $1$ and $n,s,s',p_1,p_2,\ldots,p_{k-2} \in \mathbb{N},$ if for a given set of values of $p_1,p_2,\ldots,p_{k-2},s$ and $s'$,  $\Theta_{n,2,t}(C_n(R))$ and $C_n(R)$ are  isomorphic circulant graphs of Type-2 w.r.t. $m$ = 2 for some $t,$ then $n$ $\equiv$ $0~(mod ~ 8),$ $2s-1+2s'-1$ = $\frac{n}{2},$ $2s-1$ $\neq$ $\frac{n}{8},$ $t$ = $\frac{n}{8}$ or $\frac{3n}{8},$ $1 \leq 2s-1 \leq \frac{n}{4}$ and $n \geq 16.$  \hfill $\Box$ }
\end{cor}

\section{On Abelian groups $(T1_{n}(C_n(R)), \circ)$, $(V_{n,r}(C_n(R)), \circ)$ and $(T2_{n,r}(C_n(R)), \circ)$}

In this section, for any circulant graph $C_n(R)$ with $r\in R$ and $m > 1$ is a divisor of $\gcd(n, r)$,  we define $V_{n,r}(C_n(R))$ and $T2_{n,r}(C_n(R))$ similar to $Ad_n(C_n(R))$ and prove that  $(V_{n,r}(C_n(R)), \circ)$ and $(T2_{n,r}(C_n(R)), \circ)$ are Abelian groups and $T2_{n,r}(C_n(R))$ $\subseteq$ $V_{n,r}(C_n(R))$ where $`\circ'$ is defined as given below. We call the group $(T2_{n,r}(C_n(R)), \circ)$ as the {\em Type-2 group of $C_n(R)$} w.r.t. $r$ and the group $(Ad_{n}(C_n(R)), \circ)$ = $(T1_{n,r}(C_n(R)), \circ)$ as the {\em Type-1 group of $C_n(R)$} or {\em Adam's group of $C_n(R)$}.

\begin{definition}{\rm \cite{v20}}\quad \label{a19} Let $V(C_n(R)) = \{v_0, v_1, \ldots, v_{n-1}\}$, $V(K_n)$ = $\{u_0, u_1,\ldots, u_{n-1}\}$, $x = qm+j,$ $0 \leq j \leq m-1$, $m > 1$ is a divisor of $\gcd(n, r)$, $0 \leq q,t,t' \leq \frac{n}{m}-1$, $m,q,t,t',x\in \mathbb{Z}_n$ and $r \in R$. Define $\Theta_{n,m,t}:$ $V(C_n(R))$ $\rightarrow$  $V(K_n)$ $\ni$ $\Theta_{n,m,t}(v_x)$ = $u_{x+jmt}$ and $\Theta_{n,m,t}((v_x, v_{x+s}))$ = $(\Theta_{n,m,t}(v_x), \Theta_{n,m,t}(v_{x+s}))$, $\forall$  $x\in \mathbb{Z}_n$ and $s\in R,$ under subscript arithmetic modulo $n$. Let $V_{n,r}$ = $\{\Theta_{n,m,t}: t = 0,1,...,\frac{n}{m}-1\},$ $V_{n,r}(v_s)$ = $\{\Theta_{n,m,t}(v_s):$ $t = 0,1,\ldots, \frac{n}{m}-1\}$, $s \in \mathbb{Z}_n$ and $V_{n,r}(C_n(R))$ = $\{\Theta_{n,m,t}(C_n(R)) = C_n(\Theta_{n,m,t}(R)):$ $t$ = $0,1,\ldots, \frac{n}{m}-1\}$ where $\theta_{n,m,t}(R)$ in $C_n(\theta_{n,m,t}(R))$  is calculated under reflexive modulo $n$. In $V_{n,r}$, define $'\circ'$ $\ni$ $\Theta_{n,m,t} ~\circ ~ \Theta_{n,m,t'}$ =  $\Theta_{n,m,t+t'}$ and $(\Theta_{n,m,t} ~\circ ~ \Theta_{n,m,t'})(C_n(R))$ = $\Theta_{n,m,t}(C_n(R)) ~\circ ~ \Theta_{n,m,t'}(C_n(R))$ = $\Theta_{n,m,t+t'}(C_n(R))$, $\forall$ $\Theta_{n,m,t},\Theta_{n,m,t'}\in V_{n,r}$ where $t+t'$ is calculated under arithmetic modulo ~$\frac{n}{m}$.
\end{definition}

$V_{n,r}(C_n(R))$ = $\{\Theta_{n,m,t}(C_n(R)): t = 0,1,\ldots, \frac{n}{m}-1\}$ and for $t$ = 0 to $\frac{n}{m}-1$, the $\frac{n}{m}$ graphs $\Theta_{n,m,t}(C_n(R))$ are isomorphic and 
$V_{n,r}(C_n(R))$ contains all isomorphic circulant graphs of Type-2 of $C_n(R)$ w.r.t. $m,$ if exist, under the transformation $\Theta_{n,m,t}$ on $C_n(R)$ where $r\in R$ and $m > 1$ is a divisor of $\gcd(n, r)$. Now, consider an important algebraic property on $V_{n,r}(C_n(R))$. 

\begin{theorem}{\rm  \quad \label{ab19}  Under the above definition of $`\circ'$, $(V_{n,r}(C_n(R)), \circ)$ is an abelian  group.}
\end{theorem}
\begin{proof}   We have, $V_{n,r}(C_n(R))$ = $\{\Theta_{n,m,t}(C_n(R)): t = 0,1,..., \frac{n}{m}-1\}$ and for $t$ = 0 to $\frac{n}{m}-1$, the $\frac{n}{m}$ graphs $\Theta_{n,m,t}(C_n(R))$ are isomorphic where $r\in R$ and $m > 1$ is a divisor of $\gcd(n, r)$.  Also, $\forall t,t'\in \{0,1,\ldots,$ $\frac{n}{m}-1\}$, $\Theta_{n,m,t}(C_n(R)) \circ  \Theta_{n,m,t'}(C_n(R))$ = $\Theta_{n,m,t+t'}(C_n(R))\in V_{n,r}$ where $t+t'$ is calculated under addition modulo $\frac{n}{m}.$ This implies, $V_{n,r}(C_n(R))$ is closed under $`\circ'$. $\Theta_{n,m,0}(C_n(R))$ = $C_n(R)$ is the identity element. $`\circ'$ is abelian since $\Theta_{n,m,t}(C_n(R)) \circ \Theta_{n,m,t'}(C_n(R))$ = $\Theta_{n,m,t+t'}(C_n(R))$ = $\Theta_{n,m,t'+t}(C_n(R))$ = $\Theta_{n,m,t'}(C_n(R)) \circ \Theta_{n,m,t}(C_n(R))$, $\forall t,t'\in\{0,1,...,\frac{n}{m}-1\}$. The associative law can be proved very easily.  	 $\Theta_{n,m,\frac{n}{m}-t}(C_n(R))$ is the inverse of $\Theta_{n,m,t}(C_n(R))$, $\forall t\in \{0,1,\ldots, \frac{n}{m}-1\}$ since $\Theta_{n,m,\frac{n}{m}-t}(C_n(R)) \circ \Theta_{n,m,t}(C_n(R))$ = $\Theta_{n,m,(\frac{n}{m}-t)+t}(C_n(R))$ = $\Theta_{n,m,0}(C_n(R))$. Hence the result. 
\end{proof}

\begin{theorem} \cite{v20}  \label{a16} {\rm Let $n,r\in \mathbb{N}$, $m > 1$ be a divisor of $\gcd(n, r)$, $r \notin R$, $\Theta_{n,m,t}(C_n(R \cup \{ r \}))$ = $C_n(S)$ and $C_n(R \cup \{ r \})$ and $C_n(S)$ be Type-2 isomorphic w.r.t. $m$, 	$1 \leq t \leq \frac{n}{m} - 1$. Then, there doesn't exist circulant graph that is isomorphic of Type-2 w.r.t. $m$ to $C_n(R)$. That is $C_n(R)$ and $\Theta_{n, m,t}(C_n(R))$ = $C_n(S)$ are isomorphic but not of Type-2 w.r.t. $m$, $m > 1$ is a divisor of $\gcd(n, r)$ and $0 \leq t \leq \frac{n}{m} - 1$.}
\end{theorem}
\begin{proof}\quad Let $R = \{r_1,r_2,\ldots,r_k\}$, $m > 1$ be a divisor of $\gcd(n, r)$ and $\gcd(n,r_i) \neq m$ for $i = 1,2,\ldots,k$.
	
	Given, $\Theta_{n,m,t}(C_n(R \cup \{ r \}))$ = $C_n(S)$ $\neq$ $C_n(x(R\cup \{r\}))$ for every $x\in \varphi_n$  and $C_n(R \cup \{ r \})$ and $C_n(S)$ are Type-2 isomorphic w.r.t. $m$, $1 <x < n-1.$
	
	This implies, $\Theta_{n,m,t}(C_n(R))$ $\cup$ $\Theta_{n,m,t}(C_n(r))$ = $C_n(S)$ and $r\in S,R\cup\{r\}$ $\ni$ $m > 1$ is a divisor of $\gcd(n, r)$ which implies that  $\Theta_{n,m,t}(C_n(R))$ = $C_n(S \setminus \{r\})$ and $r\notin R,S \setminus \{r\}$ since $\Theta_{n,m,t}(r)$ = $r$ and $\Theta_{n,m,t}(C_n(r))$ = $C_n(\Theta_{n,m,t}(r))$ = $C_n(r)$. ~\hfill $(a)$
	
	Now, for every $s\in S \setminus \{r\},$ $m > 1$ is not a divisor of $\gcd(n, s)$. And so $C_n(S \setminus \{r\})$ can not be Type-2 isomorphic w.r.t. {$m$ to any} circulant graph $C_n(R)$, $m > 1$ is a divisor of $\gcd(n, r)$.
	
	Also, if $\Theta_{n,m,t}(C_n(U))$ = $C_n(W)$ for some $t$, then 
	
	\hspace{.7cm} $\Theta_{n,m,\frac{n}{m}-t}(C_n(W)) = \Theta_{n,m,\frac{n}{m}-t}(\Theta_{n,m,t}(C_n(U))) = \Theta_{n,m,\frac{n}{m}-t+t}(C_n(U))  = C_n(U)$. \hfill $(b)$
	
	Using $(b)$ in $(a)$, we get, $\Theta_{n,m,\frac{n}{m}-t}(C_n(S \setminus \{r\}))$ = $C_n(R)$ which implies that $C_n(S \setminus \{r\})$ and $C_n(R)$ are isomorphic circulant graphs but they are not of Type-2 w.r.t. $m$ since $m > 1$ is not a divisor of $\gcd(n, s)$ for every $s\in S \setminus \{r\}$. This implies, $C_n(R)$ and $\Theta_{n, m,t}(C_n(R))$ are isomorphic but not of Type-2 w.r.t. m, $0 \leq t \leq \frac{n}{m} - 1$.
\end{proof}

It is interesting to know how Type-2 isomorphism is defined, how the transformation is taking place and under what conditions Type-2 isomorphic circulant graph(s) are obtained. These are given in the following paragraphs \cite{v20}. 

Here, we take $m_i$ = $\gcd(n, r_i)$ for at least one $i,$ $r_i\in R,$ $1 \leq i \leq k,$ so that $C_n(R)$ for a set $R$ = $\{ r_1, r_2, \ldots, r_k \}$ contains $m_i$ number of disjoint isomorphic circulant subgraphs, say, $\Gamma_0, \Gamma_1, \ldots, \Gamma_{m_i-1},$ each having one periodic cycle of period $r_i$ and of length $n_i$ = $\frac{n}{\gcd(n, r_i)},$ using Theorem \ref{a1} where $v_j \in V(\Gamma_j)$ (and thereby $v_{j+qm_i} \in V(\Gamma_j),$ using subscript modulo $n,$ $0 \leq q \leq n_i-1),$ $0 \leq j \leq m_i-1$. For some value of $t$, the transformed graph $\Theta_{n,m_i,t}(C_n(R))$ may be in the form of $C_n(S)$ for some $S,$ $0 \leq t \leq n_i-1$. Suppose $\Theta_{n,m_i,t}(C_n(R))$ = $C_n(S)$ for some $S$ = $\{s_1, s_2, \ldots, s_k\},$ then by the necessary condition for isomorphism between $C_n(R)$ and $C_n(S),$ for each $j$ there exists $q$ such that $\gcd(n, r_j)$ = $\gcd(n, s_q)$ using Theorem \ref{a3}, $1 \leq r_j,s_q \leq \frac{n}{2}$ and $1 \leq j,q \leq k$. Under this transformation, each $\Gamma_j$ simply rotates $tm_i$ positions (points) w.r.t. $\Gamma_{j-1}$ in the regular $n$-gon in the clockwise direction and thereby the vertices of each $V_j$ = $V(\Gamma_j)$ are taking the positions of the vertices of $V_j$ only, $0 \leq t \leq \frac{n}{m_i}-1$ and $0 \leq j \leq m_i-1.$ By this transformation acting on $C_n(R),$ we may get a circulant graph of the form $C_n(S)$ for some $S \subseteq [1,\frac{n}{2}].$  

While considering Type-2 isomorphism of $C_n(R),$ under the transformation $\Theta_{n,m_i,t},$ sets of elements $V(\Gamma_j)$ move uniformly instead of individual elements (points) under Adam's isomorphism. Also, the sets of elements $\Gamma_j$ are isomorphic circulant subgraphs of the given circulant graph and the {\it jump sizes of these isomorphic circulant subgraphs do not change and all others move and may change} but still have to satisfy the necessary condition for circulant graph isomorphism (Theorem \ref{a3}). The movements are so arranged that the resultant graph may be circulant in its representation.

	Under Type-2 isomorphism acting on $C_n(R)$, the vertices of $C_n(R)$ move uniformly on the vertices of regular $n$-gon $\ni$ sets of vertices of circulant subgraphs $\Gamma_j$ of $C_n(R)$ move uniformly and the transformed graph $\Theta_{n,m_i,t}(C_n(R))$ becomes $C_n(S)$ for some $t$ and $S$  where $\Gamma_j$ is the periodic cycle of period $r_i$ and length $\frac{n}{m_i}$ in $C_n(R)$ and contains $v_j,$ $S \subseteq [1, \frac{n}{2}]$, $\gcd(n, r_i)$ = $m_i > 1$, $0 \leq j \leq m_i-1$ and $0 \leq t \leq \frac{n}{m_i}-1$. And when $C_n(R)$ and $C_n(S)$ are Type-2 isomorphic w.r.t. $m_i$, $C_n(R)$ and $C_n(S)$ are isomorphic but not of Adam's, $S \neq R$. 
	
	 Following result is related to Remark \ref{r8} on Type-2 isomorphic circulant graphs.
	
	\begin{theorem}\quad {\rm  \label{a17a} Let $n,r \in \mathbb{N}$, $V(C_n(R))$ = $\{v_1,v_2,...,v_{n-1}\}$ and $V(K_n)$ = $\{u_1,u_2,$ $...,u_{n-1}\}$ such that $n > n-r > r > 1$ and $n \neq yr$ for any $y\in\mathbb{N}$. Define mapping $\Theta_{n,r,t}:$ $V(C_n(R)) \rightarrow V(K_n)$ such that $\Theta_{n,r,t}(v_x)$ = $u_{x+jrt}$, $\forall$ $v_x \in V(C_n(R))$ under subscript arithmetic modulo $n$ where $x = qr+j,$ $0 \leq j \leq r-1,$ $0 \leq q,t \leq \frac{n}{\gcd(n,r)}-1$ and $j,q,t \in \mathbb{Z}_n$. Then, $C_n(R)$ doesn't have isomorphic circulant graph of Type-2 w.r.t. $r$.}
	\end{theorem}
	\begin{proof} 	When $n > n-r > r > 1$ and $n \neq yr$ for any $y\in\mathbb{N}$ $\ni$ $1 \leq y \leq \frac{n}{2}$ and $n,r,y \in \mathbb{Z}_n$, $\frac{n}{r}$ is not an integer and thereby $rC_y(1) = C_{ry}(r) \ncong C_n(s)$ for any $s,y\in\mathbb{N}$. This implies, the question of getting $r$ copies of a circulant subgraph of $C_n(R)$ with vertex set = $V(C_n(R))$ is not possible. This implies, getting Type-2 isomorphic circulant graph(s) to $C_n(R)$ is not possible in this case. Hence the result. 
	\end{proof}
		
	Some properties of $\Theta_{n,m_i,t}(C_n(R))$ are given below. 

\vspace{.1cm} 
\noindent
{\bf Properties of $\Theta_{n,m,t}(C_n(R)):$}	
\begin{enumerate}
	\item [\rm (a)]  \label{r11a}  Let $\Theta_{n,m,t}(C_n(R)) = C_n(S)$, $m > 1$ be a divisor of $\gcd(n,r)$ and $r_i \in [1, \frac{n}{2}]$ $\ni$ $\gcd(n, r_i)$ = $\gcd(n,r)$. Then, $r_i \in R$ if and only if $r_i \in S$ follows from the definition of $\Theta_{n,m,t}.$
	
	\item [\rm (b)]  \label{r11b} For a given circulant graph $C_n(R)$ and for a particular value of $t$ if $\Theta_{n,m,t}(C_n(R))$ = $C_n(S)$ for some $S$, then $\forall$ $t'$ $\ni$ $0 \leq t,t' \leq \frac{n}{m} -1$, $\Theta_{n,m,t+t'}(C_n(R))$ = $\Theta_{n,m,t'+t}(C_n(R))$ 
	\\
	= $\Theta_{n,m,t'}(\Theta_{n,m,t}(C_n(R)))$ = $\Theta_{n,m,t'}(C_n(S))$ where $m > 1$ is a divisor of $\gcd(n,r)$.
	
	\item [\rm (c)]  \label{r11c} Let $C_n(R)$ $\cong$ $C_n(S)$ and $0 \leq t \leq \frac{n}{m}-1$. Then, for some $t$, $C_n(S)$ = $\Theta_{n,m,t}(C_n(R))$  if and only if $C_n(R)$ = $\Theta_{n,m,\frac{n}{m}-t}(C_n(S)).$ This follows from $\Theta_{n,m,\frac{n}{m}-t}(C_n(S)) = \Theta_{n,m,\frac{n}{m}-t}(\Theta_{n,m,t}(C_n(R))) = \Theta_{n,m,(\frac{n}{m}-t)+t}(C_n(R))$ = $\Theta_{n,m,0}(C_n(R))$ = $C_n(R).$

\vspace{.1cm} 
Circulant graph	$C_n(R)$ is vertex-transitive and  for a particular value of $t,$ $\Theta_{n,m,t}(C_n(R))$ is the resultant of uniform rotation of $m$ copies of circulant subgraphs $\Gamma_j$ of $C_n(R)$ and by Theorem \ref{ab14}, $\Theta_{n,m,t}(C_n(R))$ satisfies the symmetric equidistance condition w.r.t. each of its vertices if and only if $\Theta_{n,m,t}(C_n(R))$ = $C_n(S')$ for some $S' \subseteq [1, \frac{n}{2}]$, $r \in R$, $m > 1$ is a divisor of $\gcd(n,r)$, $0 \leq j \leq m-1$ and $0 \leq t \leq \frac{n}{m}-1$. Based on the above, we obtain some more properties of $\Theta_{n,m,t}(C_n(R))$ as given below. 

\vspace{.1cm} 
\item [\rm (d)] \label{r11d} 	If $t_1$ is the smallest positive integer value of $t$ such that $\Theta_{n,m,t_1}(C_n(R))$ = $C_n(S_1)$ for some $S_1 \neq R$, $q_1t_1$ = $\frac{n}{m}$ and $q_1\in\mathbb{N}$, then $\forall i\in\mathbb{N}_0 \ni$ $0 \leq it_1 \leq \frac{n}{m}-1$, $\Theta_{n,m,it_1}(C_n(R))$ = $C_n(S_i)$ for some $S_i \subseteq [1, \frac{n}{2}]$, $S_0$ = $R$, $r\in R$, $m > 1$ is a divisor of $\gcd(n,r)$, $0 \leq i \leq q_1$ and $1 \leq t_1 \leq \frac{n}{m}$. 

\item [\rm (e)]  \label{r11e}  If $t_2$ is the smallest positive integer value of $t$ such that $\Theta_{n,m,t_2}(C_n(R))$ = $C_n(S_1)$ for some $S_1 \neq R$, $C_n(S_1)\in T1_n(C_n(R))$ = $Ad_n(C_n(R))$, $q_2t_2$ = $\frac{n}{m}$ and $q_2\in\mathbb{N}$, then $\forall i\in\mathbb{N}_0$ such that $0 \leq it_2 \leq \frac{n}{m}-1$, $\Theta_{n,m,it_2}(C_n(R))$ = $C_n(S_i)$ and $C_n(S_i)\in T1_n(C_n(R))$ for some $S_i \subseteq [1, \frac{n}{2}]$, $S_0$ = $R$, $r\in R$, $m > 1$ is a divisor of $\gcd(n,r)$, $0 \leq i \leq q_2$ and $1 \leq t_2 \leq \frac{n}{m}$. 

\vspace{.1cm}
Let $T2_{n,r}(C_n(R))$ = $\{C_n(R)\} \cup \{C_n(S): C_n(S)$ is Type-2 isomorphic to $C_n(R)$ w.r.t. $m\}$, $r\in R$ and $m > 1$ is a divisor of $\gcd(n,r)$. This implies that $T2_{n,r}(C_n(R))$ = $\{C_n(R)\} \cup \{\Theta_{n,m,t}(C_n(R)): \Theta_{n,m,t}(C_n(R))$ = $C_n(S)$ and $C_n(S)$ is Type-2 isomorphic to $C_n(R)$ w.r.t. $m$, $r\in R$ and $m > 1$ is a divisor of $\gcd(n,r)$, $1 \leq t \leq \frac{n}{m}-1\}$ = $\{\Theta_{n,m,0}(C_n(R))\} \cup \{\Theta_{n,m,t}(C_n(R)):$ $\Theta_{n,m,t}(C_n(R))$ = $C_n(S)$ and $C_n(S)$ is Type-2 isomorphic to $C_n(R)$ w.r.t. $m$, $1 \leq t \leq \frac{n}{m}-1\}$, $r\in R$ and $m > 1$ is a divisor of $\gcd(n,r)$. Clearly, $T2_{n,r}(C_n(R))$ $\subseteq V_{n,r}(C_n(R))$ and  $T1_n(C_n(R))$ $\cap$ $T2_{n,r}(C_n(R))$ = $\{C_n(R)\}$ where $T1_n(C_n(R))$ = $Ad_n(C_n(R))$. We present here some properties related to $T2_{n,r}(C_n(R))$.

\vspace{.1cm}
\item [\rm (f)]  \label{r11f}  If $t_3$ is the smallest positive integer value of $t$ $\ni$ $\Theta_{n,m,t_3}(C_n(R))$ = $C_n(S_1)$ for some $S_1 \neq R$, $C_n(S_1)\in T2_{n,r}(C_n(R))$, $q_3t_3$ = $\frac{n}{m}$ and $q_3\in\mathbb{N}$, then $\forall i\in\mathbb{N}_0 \ni 0 \leq it_3 \leq \frac{n}{m}-1$, $\Theta_{n,m,it_3}(C_n(R))$ = $C_n(S_i)$ for some $S_i \subseteq [1, \frac{n}{2}]$, $C_n(S_i)\in T2_{n,r}(C_n(R))$, $S_0$ = $R$, $m > 1$ is a divisor of $\gcd(n,r)$, $0 \leq i \leq q_3$ and $1 \leq t_3 \leq \frac{n}{m}-1$. 

\item [\rm (g)]  \label{r11g}  If $t_4$ is the smallest positive integer value of $t$ $\ni$ $\Theta_{n,m,t_4}(C_n(R))$ = $C_n(R)$ = $\Theta_{n,m,0}(C_n(R))$, $q_4t_4$ = $\frac{n}{m}$, $1 \leq t_4 \leq \frac{n}{m}-1$ and $q_4\in\mathbb{N}$, then $\forall i\in\mathbb{N}_0 \ni 0 \leq it_4 \leq \frac{n}{m}-1$, $\Theta_{n,m,it_4}(C_n(R))$ = $C_n(R)$, $r\in R$, $m > 1$ is a divisor of $\gcd(n,r)$, $0 \leq i \leq q_4$ and $1 \leq t_4 \leq \frac{n}{m}-1$.  

\vspace{.1cm}
Similar to Theorem \ref{a7b} related to $T1_n(C_n(R))$, we present here a property of  $T2_{n,r}(C_n(R))$.

\vspace{.1cm}		
\item [\rm (h)]  \label{r11h} Let $C_n(R)$ $\cong$ $C_n(S)$, $R \neq S$ and $|R| = |S| \geq 3$. Then, $C_n(S)\in$ $T2_{n,r}(C_n(R))$ if and only if  $T2_{n,r}(C_n(R))$ = $T2_{n,r}(C_n(S))$ if and only if  $C_n(R)\in T2_{n,r}(C_n(S))$. This is true because of the following. When $T2_{n,r}(C_n(R))$ = $T2_{n,r}(C_n(S))$, $T2_{n,r}(C_n(R))$ $\subseteq$ $T2_{n,r}(C_n(S))$ and $T2_{n,r}(C_n(S))$ $\subseteq$ $T2_{n,r}(C_n(R))$ which implies, $C_n(R)\in T2_{n,r}(C_n(S))$ and $C_n(S)\in T2_{n,r}(C_n(R))$. On the other hand, assume that $C_n(S)\in T2_{n,r}(C_n(R))$, $S \neq R$. Our aim is to prove that $T2_{n,r}(C_n(R))$ = $T2_{n,r}(C_n(S))$. That is to prove $T2_{n,r}(C_n(R))$ $\subseteq$ $T2_{n,r}(C_n(S))$ and $T2_{n,r}(C_n(S))$ $\subseteq$ $T2_{n,r}(C_n(R))$. Let $C_n(T)\in T2_{n,r}(C_n(R))$ for some $T$, $T \neq R$. Given, $C_n(S)\in T2_{n,r}(C_n(R))$, $S \neq R$. This implies, $C_n(T)$ = $\Theta_{n,m,t}(C_n(R))$, $C_n(S)$ = $\Theta_{n,m,t'}(C_n(R))$ (which implies, $C_n(R)$ = $\Theta_{n,m,\frac{n}{m}-t'}(C_n(S))$ using property $(c)$,  $0 \leq t' \leq \frac{n}{m}-1$) for some $t$ and $t'$,  $C_n(T)$ and $C_n(R)$ are Type-2 isomorphic w.r.t. $m$ as well as $C_n(S)$ and $C_n(R)$, $S,T \neq R$ and $1 \leq t,t' \leq \frac{n}{m}-1$. This implies that $C_n(T)$ = $\Theta_{n,m,t}(C_n(R))$ = $\Theta_{n,m,t}(\Theta_{n,m,\frac{n}{m}-t'}(C_n(S)))$ = $\Theta_{n,m,t+(\frac{n}{m}-t')}(C_n(S))$ = $\Theta_{n,m,t_2}(C_n(S))$ for some $t$,$t_1$ = $\frac{n}{m}-t'$ and $t_2$ = $t+(\frac{n}{m}-t')$ and $C_n(S)$, $C_n(T)$ and $C_n(R)$ are isomorphic of Type-2 w.r.t. $m$, $S,T \neq R$ and $1 \leq t,t',t_1,t_2 \leq \frac{n}{m}-1$. This implies, $C_n(T)$ = $\Theta_{n,m,t_2}(C_n(S))$ for some $t_2$ and $C_n(T)$ and $C_n(S)$ are Type-2 isomorphic w.r.t. $m$, $1 \leq t_2 \leq \frac{n}{m}-1$. This implies, $C_n(T) \in T2_{n,r}((C_n(S)))$ which implies, $T2_{n,r}(C_n(R))$ $\subseteq$ $T2_{n,r}(C_n(S))$. Similarly, we can prove that $T2_{n,r}(C_n(S))$ $\subseteq$ $T2_{n,r}(C_n(R))$. This implies, $T2_{n,r}(C_n(R))$ = $T2_{n,r}(C_n(S))$. 
	
\item [\rm (i)]  \label{r11i}  $T2_{n,r}(C_n(R))$ is closed under $`\circ'$ follows from property $(f)$ where $\circ$ is defined as $\forall$ $C_n(S)$ = $\Theta_{n,m,t_1}(C_n(R)), C_n(T) = \Theta_{n,m,t_2}(C_n(R))\in T2_{n,r}(C_n(R))$, $\Theta_{n,m,t_1}(C_n(R)) \circ \Theta_{n,m,t_2}(C_n(R))$ = $\Theta_{n,m,t_3}(C_n(R))$, $t_3$ = $t_1+t_2$, $0 \leq t_1,t_2,t_3 \leq \frac{n}{m}-1$ and $t_3$ in $\Theta_{n,m,t_3}$ is calculated under arithmetic modulo $\frac{n}{m}$. 
		
\item [\rm (j)]  \label{r11j} Let $C_n(R)$ $\cong$ $C_n(S)$, $R \neq S$ and $|R| = |S| \geq 3$. Then, either $T2_{n,r}(C_n(R))$ $\cap$ $T2_{n,r}(C_n(S))$ = $\emptyset$ or $T2_{n,r}(C_n(R))$ = $T2_{n,r}(C_n(S))$. This follows from property $(h)$.		
\end{enumerate}

$C_n(R)$ has Type-2 isomorphic circulant graph w.r.t. $m$ if and only if $T2_{n,r}(C_n(R))$ $\neq$ $\{C_n(R)\}$ if and only if $T2_{n,r}(C_n(R$ $)) \setminus \{C_n(R)\} \neq \emptyset$ if and only if $|T2_{n,r}(C_n(R))| > 1$ \cite{v20}. In the next theorem, we  prove that $(T2_{n,r}(C_n(R)), \circ)$  is a subgroup of $(V_{n,r}(C_n(R)), \circ)$. 

\begin{theorem}\cite{v20} \label{a20} {\rm $(T2_{n,r}(C_n(R)),  \circ)$ is a subgroup of $(V_{n,r}(C_n(R)), \circ)$ where $r\in R$ and $m > 1$ is a divisor of $\gcd(n,r)$.}
\end{theorem}
\begin{proof}\quad $T2_{n,r}(C_n(R))$ is closed under $`\circ'$ by property $(i)$, $\Theta_{n,m,0}(C_n(R))$ = $C_n(R)\in T2_{n,r}(C_n(R))$ is the identity element in $T2_{n,r}(C_n(R))$, $T2_{n,r}(C_n(R)) \subseteq V_{n,r}(C_n(R))$ and  by Theorem \ref{ab19}, $(V_{n,r}(C_n(R)), \circ)$ is an abelian group. When $T2_{n,r}(C_n(R))$ = $\{ \Theta_{n,m,0}(C_n(R)) = C_n(R) \},$ $(T2_{n,r}(C_n(R)), \circ)$ is the trivial subgroup of $(V_{n,r}(C_n(R)), \circ)$. When $T2_{n,r}(C_n(R))$ $\neq$ $\{ C_n(R) = \Theta_{n,m,0}(C_n(R))\}$, then by property $(f)$, there exists $t_1$, the smallest positive integer value of $t$ in $\theta_{n,m,t}(C_{n}(R))$ $\ni$ $\theta_{n,m,t_1}(C_{n}(R))$ = $C_{n}(S_1)$ and $C_n(S_{1})\in T2_{n,r}(C_n(R))$ for some $S_1 \neq R$, $0 \leq t,t_1 \leq \frac{n}{m}-1$, $q_1t_1$ = $\frac{n}{m}$ and $m > 1$ is a divisor of $\gcd(n,r)$. Let $C_n(S),C_n(T)\in T2_{n,r}(C_n(R))$. Then by property $(f)$, there exists $i$ and $j$ $\ni$ $C_n(S)$ = $\Theta_{n,m,it_1}(C_n(R))$ and $C_n(T)$ = $\Theta_{n,m,jt_1}(C_n(R))$, $0 \leq t_1 \leq \frac{n}{m}-1$, $q_1t_1$ = $\frac{n}{m}$, $0 \leq i,j \leq q_1-1$ and $0 \leq it_1,jt_1 \leq \frac{n}{m}-1$. This implies, $\Theta_{n,m,it_1}(C_n(R)), \Theta_{n,m,jt_1}(C_n(R))\in T2_{n,r}(C_n(R))$, $0 \leq t_1 \leq \frac{n}{m}-1$, $q_1t_1$ = $\frac{n}{m}$, $0 \leq i,j \leq q_1-1$ and $0 \leq it_1,jt_1 \leq \frac{n}{m}-1$. This implies, $\Theta_{n,m,it_1}(C_n(R)), \Theta_{n,m,(q_1-j)t_1}(C_n(R))\in T2_{n,r}(C_n(R))$ using property $(f)$, $q_1t_1$ = $\frac{n}{m}$, $0 \leq i,j \leq q_1-1$ and $0 \leq it_1,jt_1 \leq \frac{n}{m}-1$. This implies, $\Theta_{n,m,it_1}(C_n(R)) \circ \Theta_{n,m,(q_1-j)t_1}(C_n(R)) = \Theta_{n,m,it_1+(q_1-j)t_1}(C_n(R)) = \Theta_{n,m,kt_1}(C_n(R))\in T2_{n,r}(C_n(R))$ using property $(f)$ and $(i)$ where $\Theta_{n,m,(q_1-j)t_1}(C_n(R))$ is the inverse element of $\Theta_{n,m,jt_1}(C_n(R))$ = $C_n(T)$ in $T2_{n,r}(C_n(R))$, $q_1t_1$ = $\frac{n}{m}$, $k$ = $q_1-j+i$, $0 \leq i,j,k \leq q_1-1$ and $0 \leq it_1,jt_1,kt_1 \leq \frac{n}{m}-1$. This implies, $(T2_{n,r}(C_n(R)),  \circ)$ is a subgroup of $(V_{n,r}(C_n(R)), \circ)$ where $r\in R$, $m > 1$ is a divisor of $\gcd(n,r)$ and $m,n,r\in\mathbb{N}$.
\end{proof} 

\begin{definition}{\rm \cite{v20}}\quad \label{a21} With usual notation, group $(T2_{n,r}(C_n(R)), \circ)$ is called the Type-2 group of $C_n(R)$ w.r.t.  $r$.  
\end{definition}

We already established that $C_{16}(1,2,7)$ and $C_{16}(2,3,5)$ are of Type-2 isomorphic w.r.t. $r$ = $2$, $Ad_{16}(C_{16}(1,2,7))$ $\neq$ $Ad_{16}(C_{16}(2,3,5)),$ $V_{16,2}(C_{16}(1,2,7))$ = $V_{16,2}(C_{16}(2,3,5))$ and $T2_{16,2}(C_{16}(1,2,7))$ = $\{\Theta_{16,2,t}(C_{16}(1,2,7)): t = 0,2\}$ = $\{\Theta_{16,2,0}(C_{16}(2,3,5))$ = $C_{16}(2,3,5),$ $\Theta_{16,2,2}(C_{16}(2,$ $3,5))$ = $C_{16}(1,2,7)\}$ = $\{\Theta_{16,2,t}(C_{16}(2,3,5)): t = 0,2\}$ = $T2_{16,2}(C_{16}(2,3,5)).$ Thus, we get, $(T2_{16,2}(C_{16}(1,2,7)), \circ)$ = $(T2_{16,2}(C_{16}(2,3,5)), \circ)$ is the Type-2 group on $C_{16}(1,2,7)$ or on $C_{16}(2,3,5)$ w.r.t. $r$ = 2. In the following, we present more general results on Type-2 isomorphic circulant graphs w.r.t. $m$ = 2.
	
\begin{theorem}{\rm \cite{v20} \quad \label{a22} For $n \geq 2,$ $k \geq 3,$ $1 \leq 2s-1 \leq 2n-1,$ $n \neq 2s-1,$ $R = \{2s-1$, $4n-(2s-1)$, $2p_1,2p_2,\ldots,2p_{k-2}\}$ and $S = \{ 2n-(2s-1)$, $2n+2s-1, 2p_1, 2p_2, \ldots, 2p_{k-2} \},$ $\Theta_{8n,2,n+t}(C_{8n}(R))$ = $\Theta_{8n,2,t}(C_{8n}(S))$ = $\Theta_{8n,2,3n+t}(C_{8n}(R))$ and $\Theta_{8n,2,n+t}(C_{8n}(S))$ = $\Theta_{8n,2,t}$ $(C_{8n}(R))$ = $\Theta_{8n,2,3n+t}$ $(C_{8n}(S))$ where $n,s,p_1,p_2,\ldots,p_{k-2} \in \mathbb{N}$ and $\gcd(p_1,p_2,\ldots,p_{k-2})$ = 1. }
\end{theorem}
\begin{proof}\quad We have $V_{n,r}(C_{n}(R))$ = $\{\Theta_{n,m,t}(C_n(R)):~ t = 0,1,\ldots,\frac{n}{m}-1\}.$ Let $n \geq 2,$ $k \geq 3,$ $n \neq 2s-1$ and $1 \leq 2s-1 \leq 2n-1$. To simplify our work, let $R$ = $\{2, 2s-1, 4n-(2s-1), 4n+2s-1, 8n-(2s-1), 8n-2\}$ = $R \cup (8n-R)$ and $S$ = $\{ 2, 2n-(2s-1), 2n+2s-1, 6n-(2s-1), 6n+2s-1, 8n-2 \}$ = $S \cup (8n-S).$ 
	
For $0 \leq t \leq 4n-1,$ $\Theta_{8n,2,n+t}(R)$ = $\Theta_{8n,2,n+t}(\{2, 2s-1, 4n-(2s-1), 4n+2s-1, 8n-(2s-1), 8n-2\})$ = $\{2, 2s-1+2n+2t, 4n-(2s-1)+2n+2t, 4n+2s-1+2n+2t, 8n-(2s-1)+2n+2t, 8n-2\}$ = $\{2, 2n+2s-1+2t, 6n-(2s-1)+2t, 6n+2s-1+2t, 2n-(2s-1)+2t, 8n-2\}$ = $\Theta_{8n,2,t}(S),$ $\Theta_{8n,2,3n+t}(R)$ = $\Theta_{8n,2,3n+t}(\{2, 2s-1, 4n-(2s-1), 4n+2s-1, 8n-(2s-1), 8n-2\})$ = $\{2, 2s-1+6n+2t,$ $4n-(2s-1)+6n+2t, 4n+2s-1+6n+2t, 8n-(2s-1)+6n+2t, 8n-2\}$ = $\{2, 6n+2s-1+2t,$ $2n-(2s-1)+2t, 2n+2s-1+2t, 6n-(2s-1)+2t, 8n-2\}$ = $\Theta_{8n,2,t}(S).$ Similarly, we can prove $\Theta_{8n,2,n+t}(S)$ = $\Theta_{8n,2,t}(R)$ = $\Theta_{8n,2,3n+t}(S).$ This also implies, $\Theta_{8n,2,n+t}(C_{8n}(R))$ = $\Theta_{8n,2,t}(C_{8n}(S))$ = $\Theta_{8n,2,3n+t}(C_{8n}(R)),$ $\Theta_{8n,2,n+t}(C_{8n}(S))$ = $\Theta_{8n,2,t}(C_{8n}(R))$ = $\Theta_{8n,2,3n+t}(C_{8n}(S))$ when $R$ = $\{2,2s-1,$ $4n-(2s-1)\}$ and $S$ = $\{ 2, 2n-(2s-1), 2n+2s-1 \}.$ Then the result follows from Remark \ref{r12}.
\end{proof}   

Corresponding to Results \ref{a17c}, \ref{a17f} and \ref{a17fb}, we obtain Results \ref{a23}, \ref{a24} and \ref{a25} on Type-2 group of $C_n(R)$ w.r.t. $r$ = 2, respectively as given below.

\begin{theorem}{\rm \quad \label{a23} For $n \geq 2,$ $k \geq 3,$ $1 \leq 2s-1 \leq 2n-1,$ $n \neq 2s-1,$ $R$ = $\{2, 2s-1, 4n-(2s-1)\}$ and $S$ = $\{2, 2n-(2s-1), 2n+2s-1\},$ $C_{8n}(R)$ and $C_{8n}(S)$ are Type-2 isomorphic w.r.t. $r$ = 2,  $T2_{8n,2}(C_{8n}(R))$ = $T2_{8n,2}(C_{8n}(S))$ = $\{C_{8n}(R), C_{8n}(S)\}$ and $(T2_{8n,2}(C_{8n}(R)), \circ)$ = $(T2_{8n,2}(C_{8n}(S)), \circ)$ is a Type-2 group, $n,s\in\mathbb{N}$. }
\end{theorem}
\begin{proof}\quad  Using Theorem \ref{a17c}, $C_{8n}(R)$ and $C_{8n}(S)$ are Type-2 isomorphic w.r.t. $r = 2.$ This implies, $C_{8n}(R),C_{8n}(S)\in T2_{8n,2}(C_{8n}(R))$ and $T2_{8n,2}(C_{8n}(R))$ = $T2_{8n,2}(C_{8n}(S))$ using property $(h)$. Our aim is to prove that $T2_{8n,2}(C_{8n}(R))$ =  $\{C_{8n}(R), C_{8n}(S) \}$ = $T2_{8n,2}(C_{8n}(S))$. Now, it is enough to prove that $\Theta_{8n,2,t}(C_{8n}(R)) \notin T2_{8n,2}(C_{8n}(R))$ for $t \neq 0,n,2n,3n,$ $0 \leq t \leq 4n-1.$ That is to prove that $\Theta_{8n,2,t}(C_{8n}(R))\notin T2_{8n,2}(C_{8n}(R))$ for all $t$ = $jn+ q,$ $0 \leq j \leq 3,$ $1 \leq  q \leq n-1$ since $0 \leq t \leq 4n-1.$
	
Let $t$ = $jn+q,$ $0 \leq j \leq 3$ and $1 \leq q \leq n-1.$ Consider $\Theta_{8n,2,jn+q}(R \cup (8n-R))$ = $\Theta_{8n,2,jn+q}(\{2,$ $2s-1, 4n-(2s-1), 4n+(2s-1), 8n-(2s-1), 8n-2\})$ = $\{2, 8n-2\} \cup (2(jn+q)+\{2s-1, 4n-(2s-1),$ $4n+(2s-1), 8n-(2s-1)\})$ = $\{2, 8n-2, 2(jn+q)+2s-1, 2(jn+q)+4n-(2s-1), 2(jn+q)+4n+(2s-1),$ $2(jn+q)+8n-(2s-1)\}$ = $\{2, 2jn+ 2q+2s-1, 2jn+2q+4n-(2s-1), 2jn+2q+4n+(2s-1),$ $2jn+2q-(2s-1), 8n-2\}$ = $\{2, 2jn+2q-(2s-1), 2jn+2q+2s-1, 2jn+2q+4n-(2s-1),$ $2jn+2q+4n+(2s-1), 8n-2\},$ under arithmetic modulo $8n.$ 
	
Elements of the above set satisfies the symmetric equidistance condition if and only if $2jn+ 2q-(2s-1)$ + $(2jn+ 2q +4n+(2s-1))$ = $8n$ and $2jn+ 2q +2s-1$ + $(2jn+ 2q +4n-(2s-1))$ = $8n$ if and only if $4jn+ 4q$ = $4n$ if and only if $jn+ q$ = $n$ which is not possible since $0 \leq j \leq 3$ and $1 \leq q \leq n-1.$ This implies, for $0 \leq j \leq 3$ and $1 \leq q \leq n-1,$ the elements of the set $\Theta_{8n,2,jn+ q}(R \cup (8n-R))$ does not satisfy the symmetric equidistance condition and hence $\Theta_{8n,2,jn+ q}(C_{8n}(R \cup (8n-R)))$ $\neq$ $C_{8n}(T \cup (8n-T))$ for any $T$ $\subseteq$ $[1, 4n].$ This implies, $T2_{8n,2}(C_{8n}(R))$ = $\{C_{8n}(R), C_{8n}(S)\}$ = $T2_{8n,2}(C_{8n}(S))$. 
	
Clearly, $(T2_{8n,2}(C_{8n}(R)), \circ)$ = $(T2_{8n,2}(S), \circ)$ is the Type-2 group of $C_n(R)$ w.r.t. $r$ = 2. 
\end{proof}

\begin{cor}{\rm \quad \label{a23b} For $n \geq 2,$ $k \geq 3,$ $1 \leq 2s-1 \leq 2n-1,$ $n \neq 2s-1,$ $n,s,p_1,p_2,\ldots,p_{k-2} \in \mathbb{N}$, $\gcd(p_1,p_2,\ldots,p_{k-2})$ = 1, $R$ = $\{2s-1, 4n-(2s-1), 2p_1,2p_2,\ldots,2p_{k-2}\}$ and $S$ = $\{2n-(2s-1), 2n+2s-1,$ $2p_1, 2p_2, \ldots, 2p_{k-2} \},$ if for a given set of values of $k,p_1,p_2,\ldots,p_{k-2},s$ and $n$, $C_{8n}(R)$ and $C_{8n}(S)$ are Type-2 isomorphic w.r.t. $m$ = 2, then $T2_{8n,2}(C_{8n}(R))$ = $T2_{8n,2}(C_{8n}(S))$ = $\{C_{8n}(R), C_{8n}(S)\}$ and $(T2_{8n,2}(C_{8n}(R)), \circ)$ = $(T2_{8n,2}(C_{8n}(S)), \circ)$ is the Type-2 group of $C_n(R)$ w.r.t. $r$ = 2. \hfill $\Box$}
\end{cor}

\begin{cor}{\rm \cite{v20} \quad \label{a24} For $n \geq 4,$ $k \geq 3,$ $1 \leq 2s-1 \leq 2^{n-2}-1,$ $n,s\in \mathbb{N}$, $R$ = $\{2, 2s-1, 2^{n-1}-(2s-1)\}$ and $S$ = $\{ 2, 2^{n-2}-(2s-1), 2^{n-2}+2s-1\}$, $C_{8n}(R)$ and $C_{8n}(S)$ are Type-2 isomorphic w.r.t. $r$ = 2, $T2_{2^n,2}(C_{2^n}(R))$ = $\{C_{8n}(R), C_{8n}(S)\}$ = $T2_{2^n,2}(C_{2^n}(S))$ and $(T2_{2^n,2}(C_{2^n}(R)), \circ)$ = $(T2_{2^n,2}(C_{2^n}(S)), \circ)$ is the Type-2 group of $C_n(R)$ w.r.t. $r$ = 2. \hfill $\Box$}
\end{cor}

\begin{cor}{\rm \cite{v20} \quad \label{a25} For $n \geq 4,$ $k \geq 3,$ $1 \leq 2s-1 \leq 2^{n-2}-1,$ $n,s,p_1,p_2,\ldots,p_{k-2} \in \mathbb{N}$, $\gcd(p_1,p_2,\ldots,p_{k-2})$ = 1, $R$ = $\{2s-1, 2^{n-1}-(2s-1),$ $2p_1, 2p_2, \ldots, 2p_{k-2} \}$ and $S$ = $\{ 2^{n-2}-(2s-1),$ $2^{n-2}+2s-1, 2p_1, 2p_2, \ldots, 2p_{k-2} \},$ if for a given set of values of $k,p_1,p_2,\ldots,p_{k-2},s$ and $n$, $C_{8n}(R)$ and $C_{8n}(S)$ are Type-2 isomorphic w.r.t. $m$ = 2, then $T2_{2^n,2}(C_{2^n}(R))$ = $\{C_{8n}(R), C_{8n}(S)\}$ = $T2_{2^n,2}(C_{2^n}(S))$ and $(T2_{2^n,2}(C_{2^n}(R)), \circ)$ = $(T2_{2^n,2}(C_{2^n}(S)), \circ)$ is the Type-2 group of $C_n(R)$  w.r.t. $r$ = 2. \hfill $\Box$}
\end{cor}

\section{Examples of Type-1 and Type-2 isomorphic circulant graphs and related groups}

In the previous section, we obtained Type-1 and Type-2 isomorphic circulant graphs of $C_n(R)$ w.r.t. to $r$ = 2 and discussed Type-1 and Type-2 groups of $C_n(R)$ which are formed on the set of all Adam's isomorphic circulant graphs of $C_n(R)$ and on the set of all Type-2 isomorphic circulant graphs of $C_n(R)$ w.r.t. $m$ = 2, respectively. In this section, we bring out more examples of Type-1 and Type-2 isomorphic circulant graphs and related groups to enlighten the topic. We start with an example.

\begin{exm} \quad \label{e1} {\rm For the circulant graph $C_{54}(R_1)$ with $R_1$ = $\{2,3,16,20\}$,
\\		
 $(i)$ $T2_{54,3}(C_{54}(R_1))$ = $\{C_{54}(R_1)$, $C_{54}(R_2), C_{54}(R_3)\}$ = $T2_{54,3}(C_{54}(R_2))$ = $T2_{54,3}(C_{54}(R_3))$, 
\\ 
$(ii)$ $T1_{54}(C_{54}(R_1))$ = $\{C_{54}(R_1), C_{54}(8,10,15,26),$ $C_{54}(4,14,21,22)\}$ = $\{C_{54}(xR_1): x = 1, 5, 7\}$,
 
 $T1_{54}(C_{54}(R_2))$ = $\{C_{54}(R_2), C_{54}(2,15,16,20), C_{54}(8,10,21,26)\}$ = $\{C_{54}(xR_2): x = 1, 5, 7\}$ and 
 
 $T1_{54}(C_{54}(R_3))$ = $\{C_{54}(R_3), C_{54}(4,14,15,22), C_{54}(2,16,20,21)\}$ = $\{C_{54}(xR_3): x = 1, 5, 7\}$ 
 \\
 where $R_2$ = $\{3,4,14,22\}$ and $R_3$ = $\{3,8,10,26\}$.}   
\end{exm}
\noindent
$(i)$ Here, $n$ = 54, $r = 3$ and $m$ = $\gcd(n, r)$ = $\gcd(54, 3)$ = 3 = $r$. 

Let $S_1$ = $R_1 \cup (54-R_1)$ = $\{2,3,16,20, 34,38,51,52\}$, $S_2$ = $R_2 \cup (54-R_2)$ = $\{3,4,14,22,$ $32,40,50,51\}$ and $S_3$ = $R_3 \cup (54-R_3)$ = $\{3,8,10,26, 28,44,46,51\}$.

\noindent
We have $V_{54,3}(C_{54}(2,3,16,20)) = \{\Theta_{54,3,t}(C_{54}(2,3,16,20)): t = 0,1,...,\frac{54}{\gcd(54,3)}-1 = 17\}$.

 For various values of $t$, it is easy to see from Table 7
 that $\Theta_{54,3,i+6t}(C_{54}(S_1))$ = $\Theta_{54,3,i}(C_{54}(S_1))$ and $\Theta_{54,3,2(j-1)}(C_{54}(S_1))$ = $C_{54}(S_j)\in V_{54,3}(C_{54}(S_1))$, $0 \leq i \leq 5$, $0 \leq i+6t \leq$ $\frac{54}{\gcd(54,3)}-1 = 17$ and $1 \leq j \leq 3$. This implies, $V_{54,3}(C_{54}(2,3,16,20))$ = $\{\Theta_{54,3,t}(C_{54}(2,3,16,$ $20)): t = 0,1,...,5 \}$ and $C_{54}(S_i)$ (= $C_{54}(R_i)$) are isomorphic circulant graphs for $i$ = 1 to 3. Moreover, 2 is the only relative prime to 3 between 1 and 3 and thereby Adam's isomorphic circulant graphs of $S_1, S_2, S_3$ are 

$2S_1$ = $2 \times \{2,3,16,20, 34,38,51,52\}$ = $\{4,6,32,40, 14,22,48,50\}$ $\neq$ $S_2,S_3$,

$2S_2$ = $2 \times \{3,4,14,22, 32,40,50,51\}$ = $\{6,8,28,44, 10,26,46,48\}$ $\neq$ $S_1,S_3$ and 

$2S_3$ = $2 \times \{3,8,10,26, 28,44,46,51\}$ = $\{6,16,20,52, 2,34,38,48\}$ $\neq$ $S_1,S_2$. 

\vspace{.1cm}
$\Rightarrow$ $C_{54}(R_i)$ and $C_{54}(R_j)$ are not Adam's isomorphic for $i \neq j$ and $1 \leq i,j \leq 3$.

\vspace{.1cm}
$\Rightarrow$ $C_{54}(R_i)$ are Type-2 isomorphic w.r.t. $r = 3$ for $i$ = 1 to 3 and 

$T2_{54,3}(C_{54}(R_1))$~ = $\{C_{54}(R_1),$~ $C_{54}(R_2),~ C_{54}(R_3)\}$ = $T2_{54,3}(C_{54}(R_2))$ ~= $T2_{54,3}(C_{54}(R_3))$ where $(T2_{54,3}(C_{54}(2,3,16,20)), \circ)$ is a subgroup of $(V_{54,3}(C_{54}(2,3,16,20)), \circ)$. 

\vspace{.2cm}
\noindent
$(ii)$ We have

$T1_{54}(C_{54}(R_1))$ = $\{C_{54}(xR_1): x\in\varphi_{54}(R_1)\}$
= $\{C_{54}(xR_1): x = 1, 5, 7, 11, 13, 17, 19, 23\}$

\hspace{2.25cm} = $\{C_{54}(R_1), C_{54}(8,10,15,26),$ $C_{54}(4,14,21,22)\}$ = $\{C_{54}(xR_1): x = 1, 5, 7\}$.

Similarly, we get the results in the other cases. \hfill $\Box$

\begin{table}\label{5} 
\caption{\small{Calculation of $\Theta_{54,3,t}(R_1 \cup (54-R_1))$, $R_1$ = $\{2,3,16,20\}$, $t$ = 0 to 6.}}
\begin{center}
\scalebox{0.8}{
\begin{tabular}{||c||c||c|c|c|c|c|c|c|c|c||} \hline \hline
~ \hspace{.05cm} $t$ \hspace{.1cm} & \backslashbox{$\Theta_{54,3,t}(x)$}{Jump \\ size \\ $x$}
& \hspace{.1cm} 2 \hspace{.1cm} & \hspace{.1cm} 3 \hspace{.1cm} & \hspace{.1cm} 16 \hspace{.1cm} & \hspace{.1cm} 20 \hspace{.1cm} & \hspace{.1cm} 34 \hspace{.1cm} & \hspace{.1cm} 38 \hspace{.1cm} & \hspace{.1cm} 51 \hspace{.1cm} & \hspace{.1cm} 52 \hspace{.1cm} & -- \\\hline \hline 
& & &  &   &  &  & & & &  \\
t & $\Theta_{54,3,t}(x)$ & 2+6t & 3 & 16+3t & 20+6t & 34+3t & 38+6t & 51 & 52+3t & {$T1$ or  $T2$ or  NS} \\ \hline \hline
& & &  &   &  &  & & & &  \\
0 & $\Theta_{54,3,0}(x)$ & 2 & 3 & 16 & 20 & 34 & 38 & 51 & 52 & Identity  \\\hline
 & & &  &   &  &  & & & & \\
1 & $\Theta_{54,3,1}(x)$ & 8 & 3 & 19 & 26 & 37 & 44 & 51 & 1 & NS \\\hline 
& & &  &   &  &  & & & & \\
2 & $\Theta_{54,3,2}(x)$ & 14 & 3 & 22 & 32 & 40 & 50 & 51 & 4  & Yes (Type-2) \\\hline
& & &  &   &  &  & & &  & \\
3 & $\Theta_{54,3,3}(x)$ & 20 & 3 & 25 & 38 & 43 & 2 & 51 & 7 & NS \\\hline
& & &  &   &  &  & & & & \\
4 & $\Theta_{54,3,4}(x)$ & 26 & 3 & 28 & 44 & 46 & 8 & 51 & 10 & Yes (Type-2) \\\hline
& & &  &   &  &  & & & & \\
5 & $\Theta_{54,3,5}(x)$ & 32 & 3 & 31 & 50 & 49 & 14 & 51 & 13 & NS \\\hline
& & &  &   &  &  & & & & \\
6 & $\Theta_{54,3,6}(x)$ & 38 & 3 & 34 & 2 & 52 & 20 & 51 & 16 & Identity \\\hline\hline
\end{tabular}}
\end{center}
\footnotesize{T1: Type-1; T2: Type-2 isomorphic w.r.t. $r$ = 3; NS: Non-symmetric.}
\end{table} 

\begin{prm} \quad \label{b3} {\rm Find $(i)$ Type-2 group of $C_{81}(3,7,20,34)$ w.r.t.  $r$ = 3 and $(ii)$ Type-1 group of $C_{81}(3,7,20,34)$.} 
\end{prm}

\noindent
{\bf Solution.}\quad $(i)$ Here, $n$ = 81, $r$ = 3. This implies, $m$ = $\gcd(n, r)$ = $\gcd(81, 3)$ = 3. 
Let $R_1$ = $\{3,7,20,34\}$ and $S_1$ = $R_1 \cup (81 - R_1)$ = $\{3,7,20,34, 47,61,74,78\}$. We calculate $V_{81,3}(C_{81}(R_1))$ = $\{\Theta_{81,3,t}(C_{81}(R_1)):$ $t$ = 0 to $\frac{n}{\gcd(n,r)}$-1 = 26$\}$. From Table 8, we get  $\Theta_{81,3,0}(S_1)$ = $S_1$, $\Theta_{81,3,3}(S_1)$ = $\{3,11,16,38, 43,65,70,78\}$,  $\Theta_{81,3,6}(S_1)$ = $\{2,3,~25,29,~ 52,56,~78,79\}$, $\Theta_{81,3,i+9t}(S_1)$ = $\Theta_{81,3,i}(S_1)$, $\Theta_{81,3,3(j-1)}(S_1)$ = $S_j$ and $C_{81}(S_j)\in V_{81,3}(C_{81}(S_1))$, $1 \leq j \leq 3$ and $0 \leq i \leq 8$. 

Let $R_2$ = $\{3,11,16,38\}$ and $R_3$ = $\{2,3,25,29\}$ so that $S_2$ = $R_2 \cup (81 - R_2)$ and $S_3$ = $R_3 \cup (81 - R_3)$. This implies, $\Theta_{81,3,i+9t}(C_{81}(R_1))$ = $\Theta_{81,3,i}(C_{81}(R_1))$, $\Theta_{81,3,3(j-1)}(C_{81}(R_1))$ = $C_{81}(R_j)$,  $V_{81,3}(C_{81}(R_k))$ = $\{\Theta_{81,3,t}(C_{81}(R_k)):$ $t$ = 0 to 8$\}$ and $C_{81}(R_j)\in V_{81,3}(C_{81}(R_k))$, $1 \leq j,k \leq 3$ and $0 \leq i \leq 8$. This implies, $C_{81}(R_i)$ are isomorphic circulant graphs for $i$ = 1 to 3. Moreover, 2 is the only integer relative prime to 3 between 1 and 3 and thereby their Adam's isomorphic circulant graphs are    

$2S_1$ = $2 \times \{3,7,20,34, 47,61,74,78\}$ = $\{6,14,40,68, 13,41,67,75\}$ $\neq$ $S_2, S_3$,

$2S_2$ = $2 \times \{3,11,16,38, 43,65,70,78\}$ = $\{6,22,32,76, 5,49,59,75\}$ $\neq$ $S_1, S_3$ and 

$2S_3$ = $2 \times \{2,3,25,29, 52,56,78,79\}$ = $\{4,6,50,58, 23,31,75,77\}$ $\neq$ $S_1, S_2$. 

\vspace{.1cm}
$\Rightarrow$ $C_{81}(R_i)$ and $C_{81}(R_j)$ are not Adam's isomorphic for $i \neq j$ and $1 \leq i,j \leq 3$.

\vspace{.1cm}
$\Rightarrow$ $C_{81}(R_i)$ are Type-2 isomorphic w.r.t. $r = 3$ for $i$ = 1 to 3.

$\Rightarrow$ $T2_{81,3}(C_{81}(R_1))$ = $\{C_{81}(R_1), C_{81}(R_2), C_{81}(R_3)\}$ = $T2_{81,3}(C_{81}(R_2))$ = $T2_{81,3}(C_{81}(R_3))$ follows from property $(f)$. Thus, $(T2_{81,3}(C_{81}(3,7,20,34)), \circ)$ is the required group which is a subgroup of $(V_{81,3}(C_{81}(3,7,20,34)), \circ)$.  

\vspace{.2cm}
\noindent
$(ii)$ Consider,

$T1_{81}(C_{81}(R_1))$ = $\{C_{81}(xR_1): x\in\varphi_{81}\}$ 

\hspace{2.25cm} = $\{C_{81}(xR_1): x = 1,2, 4,5, 7,8, 10,11, 13,14, 16,17, 19,20,$ 

\hfill $22,23, 25,26, 28,29, 31,32, 34,35, 37,38, 39,40\}$

\hspace{2.25cm}  = $\{C_{81}(R_1), C_{81}(6, 13, 14, 40), C_{81}(1, 12, 26, 28), C_{81}(8, 15, 19, 35), C_{81}(5, 21, 22, 32),$

\hfill $C_{81}(2, 24, 25, 29), C_{81}(11, 16, 30, 38), C_{81}(4, 23, 31, 33), C_{81}(10, 17, 37, 39)\}$ 
 
\hspace{2.25cm}  = $\{C_{81}(xR_1): x = 1,2, 4,5, 7,8, 10,11, 13\}$.

$\therefore$ $(T1_{81}(C_{81}(R_1)), \circ)$ =  $(\{C_{81}(xR_1): x = 1,2, 4,5, 7,8, 10,11, 13\}, \circ)$ is the Type-1 group of $C_{81}(R_1)$ where $R_1$ = $\{3, 7, 20, 34\}$. \hfill $\Box$

\begin{table} \label{6} 
\caption{\small{Calculation of $\Theta_{81,3,t}(R \cup (81-R))$, $R$ = $\{3,7,20,34\}$, $r$ = 0 to 8.}}
\begin{center}
\scalebox{0.75}{
\begin{tabular}{||c||c||c|c|c|c|c|c|c|c|c||} \hline \hline
~ \hspace{.05cm} $t$ \hspace{.1cm} & \backslashbox{$\Theta_{81,3,t}(x)$}{Jump size \\ $x$}
& \hspace{.1cm} 3 \hspace{.1cm} & \hspace{.1cm} 7 \hspace{.1cm} & \hspace{.1cm} 20 \hspace{.1cm} & \hspace{.1cm} 34 \hspace{.1cm} & \hspace{.1cm} 47 \hspace{.1cm} & \hspace{.1cm} 61 \hspace{.1cm} & \hspace{.1cm} 74 \hspace{.1cm} & \hspace{.1cm} 78 \hspace{.1cm} & -- \\\hline \hline
& & &  &   &  &  & & & &  \\
t & $\Theta_{81,3,t}(x)$ & 3 & 7+3t & 20+6t & 34+3t & 47+6t & 61+3t & 74+6t & 78 & {$T1$ or  $T2$ or  NS} \\ \hline \hline
& & &  &   &  &  & & & &  \\
0 & $\Theta_{81,3,0}(x)$ & 3 & 7 & 20 & 34 & 47 & 61 & 74 & 78 & Identity  \\\hline
 & & &  &   &  &  & & & & \\
1 & $\Theta_{81,3,1}(x)$ & 3 & 10 & 26 & 37 & 53 & 64 & 80 & 78 & NS \\\hline 
& & &  &   &  &  & & & & \\
2 & $\Theta_{81,3,2}(x)$ & 3 & 13 & 32 & 40 & 59 & 67 & 5 & 78 & NS \\\hline
& & &  &   &  &  & & &  & \\
3 & $\Theta_{81,3,3}(x)$ & 3 & 16 & 38 & 43 & 65 & 70 & 11 & 78 & Yes (Type-2) \\\hline
& & &  &   &  &  & & & & \\
4 & $\Theta_{81,3,4}(x)$ & 3 & 19 & 44 & 46 & 71 & 73 & 17 & 78 & NS \\\hline
& & &  &   &  &  & & & & \\
5 & $\Theta_{81,3,5}(x)$ & 3 & 22 & 50 & 49 & 77 & 76 & 23 & 78 & NS \\\hline
& & &  &   &  &  & & & & \\
6 & $\Theta_{81,3,6}(x)$ & 3 & 25 & 56 & 52 & 2 & 79 & 29 & 78 & Yes (Type-2) \\\hline
& & &  &   &  &  & & & & \\
7 & $\Theta_{81,3,7}(x)$ & 3 & 28 & 62 & 55 & 8 & 1 & 35 & 78 & NS \\\hline
& & &  &   &  &  & & & & \\
8 & $\Theta_{81,3,8}(x)$ & 3 & 31  & 68 & 58 & 14 & 4 & 41 & 78 & NS  \\\hline\hline
\end{tabular}}
\end{center}
\footnotesize{T1: Type-1; T2: Type-2 isomorphic w.r.t. $r$ = 3; NS: Non-symmetric.}
\end{table} 

\vspace{.2cm}
While defining Type-2 isomorphism of $C_n(R)$ w.r.t.  $m$, we consider that $r\in R$ and $m > 1$ is a divisor of $\gcd(n, r)$ because of the following. 

\begin{enumerate}
\item [\rm (1)] Let $m > 1$ be a divisor of $\gcd(n, r)$, $C_n(R)$ $\cong$ $C_n(S)$, $R \neq S$, $lm \notin R$, $l,m,n,r\in \mathbb{N}$, $|R| = |S| \geq 2$ and $C_n(R \cup \{ m \})$ and $C_n(S \cup \{ m \})$ be Type-2 isomorphic w.r.t. $m$. Then, for $q\in\mathbb{N}$, $qm\in\mathbb{Z}_{\frac{n}{2}}$, $\gcd(n, qm) = m_1m > m$ and $\gcd(m, m_1) = 1$, $C_n(R \cup \{ qm \})$ and $C_n(S \cup \{ qm \})$ may be of Type-2 isomorphic w.r.t. $qm$, $m_1 \geq 2$ and $m_1\in\mathbb{N}$.
\end{enumerate}

\begin{exm}\quad \label{e2} {\rm Circulant graphs $C_{48}(1,2,23)$ and $C_{48}(2,11,13)$ are isomorphic of Type-2 w.r.t. $r = 2$  and also $C_{48}(1,6,23)$ and $\theta_{48,2,6}(C_{48}(1,6,23))$ = $C_{48}(6,11,13)$ (See Problem \ref{p4}.) which are given in Figures 15 and 16.  

 The second set of graphs is obtained from the first set by replacing 2 by 6. Here, $r = m = 2$, $\gcd(n, qm) = \gcd(48, 6)$ = 6 = $2\times 3$, $q = 3 = m_1$ and $\gcd(m, m_1) = \gcd(2, 3) = 1$. See Problem \ref{p4} for their isomorphism of Type-2.  \hfill $\Box$}
\end{exm}
\begin{exm} \label{e3} {\rm Circulant graphs $C_{54}(1,3,17,19)$, $C_{54}(3,7,11,25)$ and $C_{54}(3,5,13,23)$ are given in Figures 21, 22 and 23 and are Type-2 isomorphic w.r.t. $r = 3$, see the Annexure, and also $C_{54}(1,6,17,19)$, $C_{54}(6,7,11,25)$ and $C_{54}(5,6,13,23)$. }

{\rm The second set of graphs is obtained from the first set by replacing 3 by 6. Here, $r = m = 3$, $\gcd(n, qm) = \gcd(54, 6) = 6 = 2\times 3$, $q = 2 = m_1$ and $\gcd(m, m_1) = \gcd(3, 2) = 1$. Their Type-2 isomorphism w.r.t. $r$ = 3 can be verified similarly  as done in Problem \ref{b3}. \hfill $\Box$}
\end{exm}
\begin{figure}[ht]
	\centerline{\includegraphics[width=6.3in]{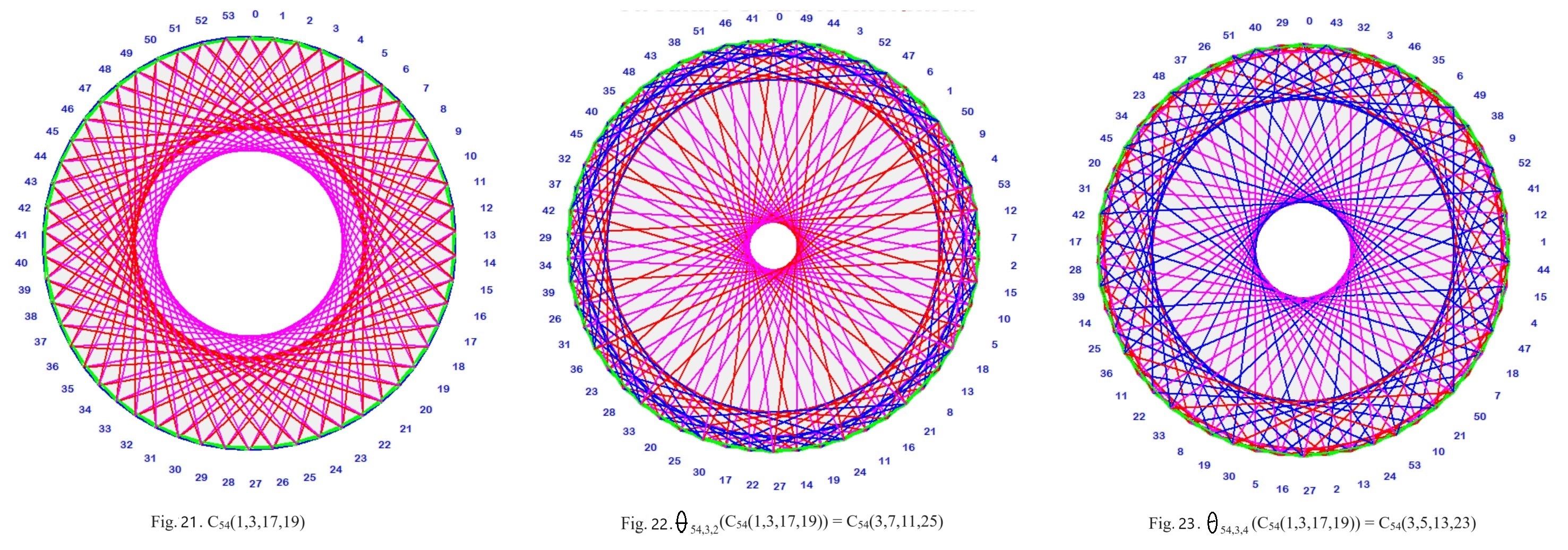}}
\end{figure}
\begin{exm} \label{e4} {\rm Graphs $C_{108}(3,5,31,41)$, $C_{108}(3,7,29,43)$ and $C_{108}(3,17,19,53)$ are Type-2 isomorphic circulant graphs w.r.t. $r = 3$, see the Annexure, and also are graphs $C_{108}(5,12,31,41)$, $C_{108}(7,12,29,43)$ and $C_{108}(12,17,19,53)$. 

The second set of graphs is obtained from the first set by replacing 3 by 12. Here, $r = m$ = 3, $\gcd(n, qm) = \gcd(108, 12) = 12 = 3\times 4$, $q = 4 = m_1$ and $\gcd(m, m_1) = \gcd(3, 4) = 1$. \hfill $\Box$}
\end{exm}

\begin{exm}\quad \label{e5} {\rm Graphs $C_{108}(3,4,32,40)$, $C_{108}(3,16,20,52)$ and $C_{108}(3,8,28,44)$ are Type-2 isomorphic w.r.t. $r$ = 3, see the Annexure, and also $C_{108}(4,12,32,40)$, $C_{108}(12,16,20,52)$ and $C_{108}(8,12,28,44)$. 

The second set of graphs is obtained from the first set by replacing 3 by 12. Here, $r = m$ = 3, $\gcd(n, qm) = \gcd(108, 12)$ = 12 = $3\times 4$, $q = 4 = m_1$ and $\gcd(m, m_1) = \gcd(3, 4) = 1$. \hfill $\Box$}
\end{exm}

\begin{enumerate}
	\item [\rm (2)]  {\rm Let $m > 1$ be a divisor of $\gcd(n,r)$, $C_n(R) \cong C_n(S)$, $R \neq S$, $lm \notin R$, $l,m,n,r\in \mathbb{N}$, $|R| = |S| \geq 2$ and $C_n(R \cup \{ m \})$ and $C_n(S \cup \{ m \})$ be Type-2 isomorphic w.r.t. $m$. Then, for $m_1,q\in\mathbb{N}$ $\ni$ $\gcd(m, m_1) >$ 1, $\gcd(n, qm)$ = $m_1m > m$, $qm\in\mathbb{Z}_{\frac{n}{2}}$ and $m_1 \geq 2$, $C_n(R \cup \{ qm \})$ and $C_n(S \cup \{ qm \})$ need not be of Type-2 isomorphic w.r.t. $qm$.}
\end{enumerate}
 
\begin{exm}\quad \label{e6} {\rm $C_{48}(1,4,23)$ and $C_{48}(4,11,13)$ are Adam's isomorphic (and not of Type-2 w.r.t.  $m$ = 4 or 2) since $C_{48}(4,11,13)$ = $C_{48}(11(1,4,23))$ = $C_{48}(13(1,$ $4,23))$ even though $C_{48}(1,2,23)$ and $C_{48}(2,11,13)$ are  isomorphic of Type-2 w.r.t. $m$ = 2. See Example \ref{e2}. \hfill $\Box$}
\end{exm}

\begin{exm}\quad \label{e7} {\rm Graphs $C_{54}(1,17,18,19)$, $C_{54}(7,11,18,25)$ and $C_{54}(5,13,18,23)$ are Adam's isomorphic (and not of Type-2 w.r.t. $m$ = 18 or 3) since $C_{54}(7,11,$ $18,25)$ = $C_{54}(7(1,17,18,19))$ and $C_{54}(5,13,18,23)$ = $C_{54}(5(1,17,18,19))$ even though the graphs $C_{54}(1,3,17,19)$, $C_{54}(3,7,11,25)$ and $C_{54}(3,5,13,23)$ are Type-2 isomorphic w.r.t. $m$ = 3. See Example \ref{e3} and Table 9. \hfill $\Box$}
\end{exm}

\begin{exm} \label{e8} {\rm Circulant graphs $C_{108}(5,18,31,41)$, $C_{108}(7,18,29,43)$ and $C_{108}(17,18,19,53)$ are Adam's isomorphic (and not of Type-2 w.r.t. $m$ = 18 or 3) since $C_{108}(7,18, 29,43)$ = $C_{108}(13(5,18, 31,41))$ and $C_{108}(17,18,19,53)$ = $C_{108}(11(5,18,31,41))$ whereas graphs $C_{108}(3,5,31,41)$, $C_{108}(3,7,29,43)$ and $C_{108}(3,17,19,53)$ are Type-2 isomorphic  w.r.t. $r$ = 3. See Example \ref{e4} and Table 10. \hfill $\Box$}
\end{exm}
  
\begin{exm} \label{e9} {\rm Circulant graphs $C_{108}(4,18,32,40)$, $C_{108}(16,18,20,52)$ and $C_{108}(8,18,28,44)$ are Adam's isomorphic (and not of Type-2 w.r.t. $m$ = 18 or 3) since $C_{108}(5(4, 18,32,40))$ = $C_{108}(16,18,20,52)$ and $C_{108}(7(4,18,32,40))$ = $C_{108}(8,18,28,44)$ whereas graphs $C_{108}(3,4,32,40)$, $C_{108}(3,16,20,52)$ and $C_{108}(3,8,28,44)$ are Type-2 isomorphic w.r.t. $r$ = 3. See Example \ref{e5} and Table 11. \hfill $\Box$}
\end{exm}

\begin{table} \label{7} 
\caption{Calculation of $rx$, $r\in R$, $x\in\varphi_{54}$ and $rx\in\mathbb{Z}_{54}$.}
\begin{center}
\scalebox{0.9}{
\begin{tabular}{||c||c|c|c|c|c|c|c|c||} \hline \hline
\backslashbox{\\Multiplier $x$}{Jump size $r$}
& \hspace{.1cm} 1 \hspace{.1cm} & \hspace{.1cm} 17 \hspace{.1cm} & \hspace{.1cm} 18 \hspace{.1cm} & \hspace{.1cm} 19 \hspace{.1cm} & \hspace{.1cm} 35 \hspace{.1cm} & \hspace{.1cm} 36 \hspace{.1cm} & \hspace{.1cm} 37 \hspace{.1cm} & \hspace{.1cm} 53 \hspace{.1cm} \\\hline \hline
& &  &   &  &  & & &  \\
5 & 5 & 31 & 36 & 41 & 13 & 18 & 23 & 49  \\\hline
& &  &   &  &  & & & \\
7 & 7 & 11 & 18 & 25 & 29 & 36 & 43 & 47  \\\hline \hline
\end{tabular}}
\end{center}
\end{table} 

\begin{table} \label{8}
\caption{Calculation of $rx$, $r\in R$, $x\in\varphi_{108}$ and $rx\in\mathbb{Z}_{108}$.}
\begin{center}
\scalebox{0.9}{
\begin{tabular}{||c||c|c|c|c|c|c|c|c||} \hline \hline
\backslashbox{\\Multiplier $x$}{Jump size $r$}
& \hspace{.1cm} 5 \hspace{.1cm} & \hspace{.1cm} 18 \hspace{.1cm} & \hspace{.1cm} 31 \hspace{.1cm} & \hspace{.1cm} 41 \hspace{.1cm} & \hspace{.1cm} 67 \hspace{.1cm} & \hspace{.1cm} 77 \hspace{.1cm} & \hspace{.1cm} 90 \hspace{.1cm} & \hspace{.1cm} 103 \hspace{.1cm} \\\hline \hline
& &  &   &  &  & & &  \\
11 & 55 & 90 & 17 & 19 & 89 & 91 & 18 & 53  \\\hline
& &  &   &  &  & & & \\
13 & 65 & 18 & 79 & 101 & 7 & 29 & 90 & 43  \\\hline \hline
\end{tabular}}
\end{center}
\end{table} 

\begin{table} \label{9} 
\caption{Calculation of $rx$, $r\in R$, $x\in\varphi_{108}$ and $rx\in\mathbb{Z}_{108}$.}
\begin{center}
\scalebox{0.9}{
\begin{tabular}{||c||c|c|c|c|c|c|c|c||} \hline \hline
\backslashbox{\\Multiplier $x$}{Jump size $r$}
& \hspace{.1cm} 4 \hspace{.1cm} & \hspace{.1cm} 18 \hspace{.1cm} & \hspace{.1cm} 32 \hspace{.1cm} & \hspace{.1cm} 40 \hspace{.1cm} & \hspace{.1cm} 68 \hspace{.1cm} & \hspace{.1cm} 76 \hspace{.1cm} & \hspace{.1cm} 90 \hspace{.1cm} & \hspace{.1cm} 104 \hspace{.1cm} \\\hline \hline
& &  &   &  &  & & &  \\
5 & 20 & 90 & 52 & 92 & 16 & 56 & 18 & 88  \\\hline
& &  &   &  &  & & & \\
7 & 28 & 18 & 8 & 64 & 44 & 100 & 90 & 80  \\\hline \hline
\end{tabular}}
\end{center}
\end{table} 

\begin{rem} \label{r23} From the above examples, it is noted that  there are Adam's isomorphic circulant graphs $C_n(R)$ and $C_n(S)$ such that $\Theta_{n,r,t}(C_n(R))$ = $C_n(S)$ for some $t\in\mathbb{N}$ with $R \neq S$, $r\in R,S$, $\gcd(n,r) > 1$ and $1 \leq t \leq \frac{n}{m} - 1$. And so a separate study is needed to find such type of isomorphic circulant graphs. 
\end{rem}

Based on the above observations, we propose the following conjectures.

 \begin{con}\quad \label{con2}  {\rm Let $m > 1$ be a divisor of $\gcd(n,r)$, $C_n(R) \cong C_n(S)$, $R \neq S$, $|R| = |S| \geq 2$, $lm \notin R,S$,  and $l,m,n,r\in \mathbb{N}$. Let $m_1,q\in\mathbb{N}$ $\ni$ $\gcd(m, m_1) = 1$, $\gcd(n, qm) = m_1m > m$, $qm\in\mathbb{Z}_{\frac{n}{2}}$, $m_1 \geq 2$ and $C_n(R \cup \{ m \})$ and $C_n(S \cup \{m\})$ be of Type-2 isomorphic w.r.t. $m$. Then, $C_n(R \cup \{ qm \})$ and $C_n(S \cup \{ qm \})$ are Type-2 isomorphic w.r.t. $qm$.} \hfill $\Box$
\end{con}
 
\begin{con}\quad\label{con3} {\rm Let $m > 1$ be a divisor of $\gcd(n,r)$, $C_n(R) \cong C_n(S)$, $R \neq S$, $|R| = |S| \geq 2$, $lm \notin R,S$,  and $l,m,n,r\in \mathbb{N}$. Let $m_1,q\in\mathbb{N}$ $\ni$ $\gcd(m, m_1) > 1$, $\gcd(n, qm) = m_1m > m$, $qm\in\mathbb{Z}_{\frac{n}{2}}$, $m_1 \geq 2$ and $C_n(R \cup \{ m \})$ and $C_n(S \cup \{m\})$ be of Type-2 isomorphic w.r.t. $m$. Then, $C_n(R \cup \{ qm \})$ and $C_n(S \cup \{ qm \})$ are not Type-2 isomorphic w.r.t. $qm$.} \hfill $\Box$
\end{con}

\section{$C_{54}(1, 3, 17, 19)$ and $C_{54}(5, 13, 21, 23)$ are isomorphic but are neither  Type-1 nor Type-2}

In this section, we present isomorphic circulant graphs $C_n(R)$ and $C_n(S)$ which are neither Type-1 nor Type-2. Let $C_n(R)$ and $C_n(S)$ be any two isomorphic circulant graphs such that $R \neq S$ and $|R| = |S| \geq 3$. Then one of the following statements is true.
\begin{enumerate} 
\item  $C_n(R)$ and $C_n(S)$ are Adam's isomorphic and $R \neq S$.  That is  for some $x\in \varphi_n$, $C_n(S) = C_n(xR)$, $x \neq 1,n-1$ and $R \neq S$. In this case, $T2_{n,r}(C_n(R))$ $\cap$ $T2_{n,r}(C_n(S))$ = $\emptyset$, $R \neq S$.

\item  $C_n(R)$ and $C_n(S)$ are isomorphic of Type-2  w.r.t. $m$ where $m > 1$ is a divisor of $\gcd(n, r)$. In this case, $T2_{n,r}(C_n(R))$ = $T2_{n,r}(C_n(S))$, $R \neq S$.

\item There are isomorphic circulant graphs $C_n(R)$ and $C_n(S)$ which are neither Adam's isomorphic  nor Type-2 isomorphic w.r.t. any particular $r\in \mathbb{Z}_\frac{n}{2}.$ That is $C_n(S) \neq C_n(xR)$ for all $x\in \varphi_n$ and also $C_n(S)$ and $C_n(R)$ are not Type-2 isomorphic w.r.t. any particular $r\in \mathbb{Z}_\frac{n}{2}.$ But their isomorphism may be connected by a sequence of Type-2 isomorphisms w.r.t. different $r$'s or Type-2 isomorphisms w.r.t. different $r$'s as well as Adam's isomorphism and in this case, $T2_{n,r}(C_n(R))$ $\cap$ $T2_{n,r}(C_n(S))$ = $\emptyset$. This is illustrated by the following example.
\end{enumerate}

\begin{exm} \quad \label{e10} {\rm  $C_{54}(1,3,17,19)$ and $C_{54}(5,13,21,23)$ are isomorphic but they are neither of Adam's nor of Type-2 w.r.t. 3 (or 21 or w.r.t. any $r$ whose $\gcd$ with 54 is $ > 1$) by the following.}
\end{exm}
\begin{enumerate}
\item [\rm (a)]  $Ad_{54}(C_{54}(1,3,17,19))$ = $\{\varphi_{54,x}(C_{54}(1,3,17,19)): x\in\varphi_{54}\}$ 

 = $\{C_{54}(x(1, 3,17,19)): x = 1,5,7,$ $11,13,17,19,23,25,29,31,35,37,41,43,47,49,53\}$ 

= $\{C_{54}(1,3,17, 19),$ $C_{54}(5,13,15,23),$ $C_{54}(7,$ $11,21,25)\}$ = $\{C_{54}($ $x(1,3,17,19)) : x = 1,5,7\}$. $\Rightarrow$ $C_{54}(5,13,21,23) \notin Ad_{54}( C_{54}(1,3,$ $17,19))$ and thereby $C_{54}(1,3,17,19)$ and $C_{54}(5,13,21,23)$ are not Adam's isomorphic. 

\item [\rm (b)] $V_{54,3}(\{1,3,17,19,35,37,51,53\})$ = $\{\Theta_{54,3,t}(\{1,3,17,19,35,37,51,53\}):$ $t = 0,1,...,18-1\}$ 

= $\{\{1,3,17,19,35,37,51,53\},$ $\{4,3,23,22,41,40,51,5\}$, $\{7,3, 29,25,47,43,51,11\}$, 

\hfill $\{10,3,35,28,$ $53,46,51,17\}$, $\{13,3,41, 31,5,49,51,23\}$, $\{16,3,47,34,11,52,51,29\}\}$ 

= $\{\Theta_{54,3,t}(\{1,3,17,19, 35,$ $37,51,53\}):$ $t$ = $0,1,2,3,4,5\}$. See Table 12. 

\begin{table} \label{10}
\caption{Calculation of $\Theta_{54,3,t}(\{1,3,17,19,35,37,51,53\})$.}
\begin{center}
\scalebox{0.75}{
\begin{tabular}{||c|c|c|c|c|c|c|c|c|c|c||} \hline \hline
~ \hspace{.1cm} $t$ \hspace{.2cm} & \backslashbox{$\Theta_{54,3,t}(x)$}{Jump size \\ $x$}
& \hspace{.2cm} 1 \hspace{.2cm} & \hspace{.2cm} 3 \hspace{.2cm} & \hspace{.2cm} 17 \hspace{.2cm} & \hspace{.2cm} 19 \hspace{.2cm} & \hspace{.2cm} 35 \hspace{.2cm} & \hspace{.2cm} 37 \hspace{.2cm} & \hspace{.2cm} 51 \hspace{.2cm} & \hspace{.2cm} 53 & -- \\\hline \hline
& & &  &   &  &  & & &  & \\
t & $\Theta_{54,3,t}(x)$ & $1+3t$ & 3 & $17+6t$ & $19+3t$ & $35+6t$ & $37+3t$ & 51 & $53+6t$ & $T1$ or $T2$ or NS  \\\hline \hline
 & & &  &   &  &  & & & & \\
0 & $\Theta_{54,3,0}(x)$ & 1 & 3 & 17 & 19 & 35 & 37 & 51 & 53 & Identity  \\\hline
 & & &  &   &  &  & & & & \\
1 & $\Theta_{54,3,1}(x)$ & 4 & 3 & 23 & 22 & 41 & 40 & 51 & 5 & NS  \\\hline 
& & &  &   &  &  & & & & \\
2 & $\Theta_{54,3,2}(x)$ & 7 & 3 & 29 & 25 & 47 & 43 & 51 & 11 & Yes (Type-2) \\\hline 
& & &  &   &  &  & & & & \\
3 & $\Theta_{54,3,3}(x)$ & 10 & 3 & 35 & 28 & 53 & 46 & 51 & 17 & NS  \\\hline 
& & &  &   &  &  & & & & \\
4 & $\Theta_{54,3,4}(x)$ & 13 & 3 & 41 & 31 & 5 & 49 & 51 & 23 & Yes (Type-2)  \\\hline 
& & &  &   &  &  & & & & \\
5 & $\Theta_{54,3,5}(x)$ & 16 & 3 & 47 & 34 & 11 & 52 & 51 & 29 & NS \\\hline\hline 
& & &  &   &  &  & & & & \\
6 & $\Theta_{54,3,6}(x)$ & 19 & 3 & 53 & 37 & 17 & 1 & 51 & 35 & Identity  \\\hline\hline
\end{tabular}}
\end{center}
\footnotesize{T1: Type-1; T2: Type-2 isomorphic w.r.t. $r$ = 3; NS: Non-symmetric.}
\end{table} 

This implies that $\Theta_{54,3,0}(C_{54}(1,3,~17,19)$ = $C_{54}(1,3,~17,19)$, $\Theta_{54,3,2}(C_{54}(1,3,~17,19))$ = $C_{54}(3,7,11,25)$ and $\Theta_{54,3,4}(C_{54}(1,3,17,19))$ = $C_{54}(3,5,13,23)$ are the only circulant graphs of the form $C_{54}(R)$ contained in $V_{54,3}(C_{54}(1,3,17,19))$. Also, see Table 12. This implies that $C_{54}(5,13,21,23)\notin V_{54,3}(C_{54}(1,3,17,19))$ which implies, $C_{54}(5,13,21,23)\notin T2_{54,3}(C_{54}(1,3,17,$ $19))$ since $T2_{n,r}(C_{n}(R)) \subseteq V_{n,r}(C_{n}(R))$ for any $C_{n}(R)$. This implies that $C_{54}(1,3,17,19)$ and $C_{54}(5,13,21,23)$ are not Type-2 isomorphic w.r.t. $r$ = 3. Similarly, we can show that $C_{54}(1,3,17,19)$ and $C_{54}(5,13,21,23)$ are not Type-2 isomorphic w.r.t. $r$ = 21. Here, $\gcd(54, 1)$ = $\gcd(54, 17)$ = $\gcd(54, 19)$ = 1.

\item [\rm (c)] Now, we establish that $C_{54}(1,3,17,19) \cong C_{54}(5,13,21,23)$ by showing $C_{54}(1,$ $3,17,19) \cong C_{54}(3,7,11,25)$ and $C_{54}(3,7,11,25) \cong C_{54}(5,13,21,23)$.

Consider, $\Theta_{54,3,2}(C_{54}(1,3,17,19)) = \Theta_{54,3,2}(C_{54}(1,3,17,19,35,37,51,53))$ = $C_{54}(7,3,29,25,$ $47,43,51,11)$ = $C_{54}(3,7,11,25,29,43,47,51) = C_{54}(3,7,11,25)$ (See Table 12) which implies, $C_{54}(1,3,17,19)$ $\cong$ $C_{54}(3,7,11,25)$. 

Also, we have $\varphi_{54,5}(C_{54}(5,13,21,23))$ = $\varphi_{54,5}(C_{54}(5,13,21,23,31,33,41,49))$ = $C_{54}(5(5,13,$ $21,23,31,33,41,49))$ = $C_{54}(25,11,51,7,47,3,43,29)$ = $C_{54}(3,$ $7,11,25)$ which implies $C_{54}(5,13,$ $21,23)$ and $C_{54}(3,7,11,25)$ are Adam's isomorphic. This implies that $C_{54}(1,3,17,19)$ and $C_{54}(5,13,21,23)$ are isomorphic but they are neither of Adam's isomorphic nor of Type-2 w.r.t. 3 (or 21 or w.r.t. any particular $r$ such that $\gcd(54,r) >$ 1). 
\end{enumerate}

Isomorphic circulant graphs $C_{54}(1,3,17,19)$, $C_{54}(3,7,11,25)$ and $C_{54}(5,13,21,23)$ are shown in Fig. 21, 22, 23. Figures 22 and 26 show $C_{54}(3,7,11,25)$ as $\Theta_{54,3,2}(C_{54}(1,3, 17,19))$ = $C_{54}(3,7, 11,25)$ and $\varphi_{54,5}(C_{54}(5,13, 21,23))$ = $C_{54}(3,7, 11,25)$, respectively. That is graph $C_{54}(3,7,11,25)$ is obtained from graph $C_{54}(5,13,21,23)$ under the transformation $\Theta_{54,3,2}$ as well as under $\varphi_{54,5}$ are shown in Figures 22 and 26, respectively. From the above, we observe the following.
\begin{figure}[ht]
	\centerline{\includegraphics[width=6in]{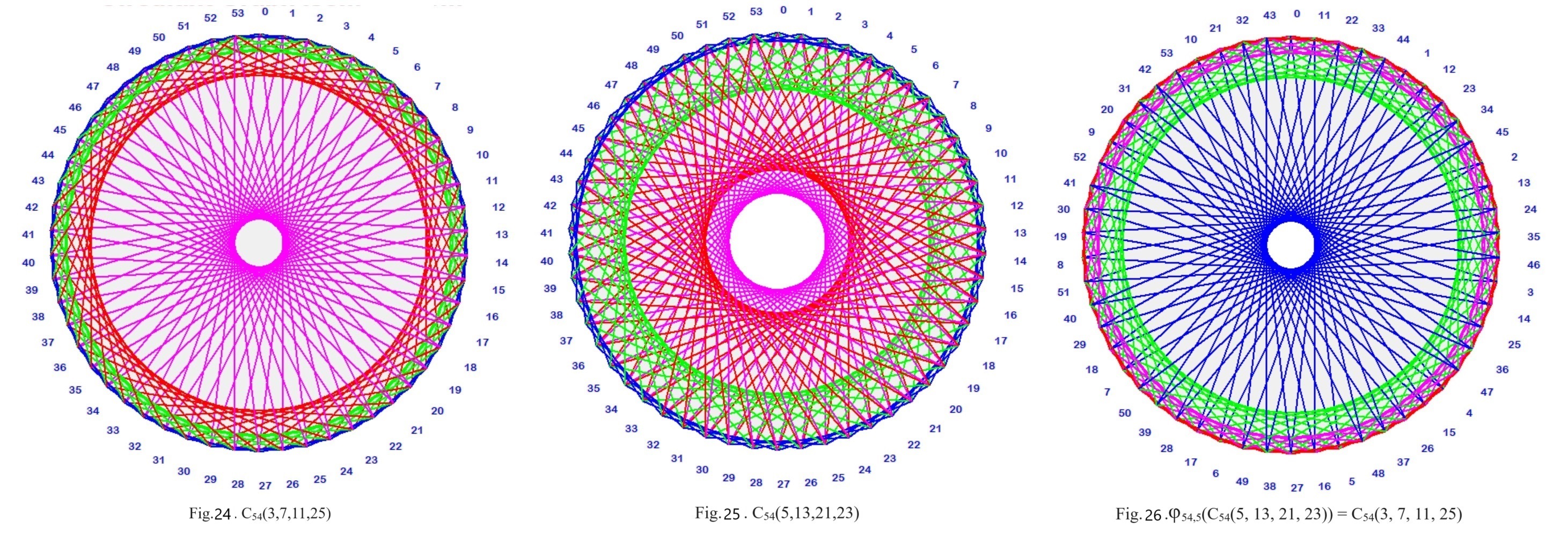}}
\end{figure}
 Given a circulant graph $C_n(R)$, it may be possible to form a sequence of isomorphisms involving Adam's as well as Type-2 w.r.t. different $r$'s and get isomorphic circulant graph(s) $C_n(S)$ which may neither be Adam's isomorphic nor be Type-2 w.r.t. a particular $r$ to $C_n(R).$ Thus a new study is required to find the sequence of isomorphisms involved among isomorphic circulant graphs. 

For any circulant graph $C_{n}(R)$,  $T2_{n,r}(C_{n}(R)) \subseteq V_{n,r}(C_{n}(R))$. In Example \ref{e10}, we could notice that for the circulant graph $C_{54}(1,3,17,19)$, $T2_{54,3}(C_{54}(1,3,17,19)) \subsetneq V_{54,3}(C_{54}(1,3,17,19))$. This need not be the case always. 
In Problem \ref{p6}, for the circulant graph $C_{27}(1,3,8,10)$, we have $T2_{27,3}(C_{27}(1,3,8,10))$ = $V_{27,3}(C_{27}(1,3,8,10))$. Thus, in general, we get the following result.

\begin{theorem}\quad {\rm \label{c40} Let $C_n(R)$ and $C_n(S)$ be Type-2 isomorphic w.r.t. $r$ and $C_n(xR)$ = $C_n(T)$, $r\in R,S$,  $x\in \varphi_n$ and $T \neq R$. Then, $C_n(S)$ and $C_n(T)$ are isomorphic but they are neither of Adam's nor of Type-2 w.r.t. any  $r'$. }
\end{theorem}
\begin{proof} \quad Given, $C_n(R)$ and $C_n(S)$ are Type-2 isomorphic w.r.t. $r$ and $C_n(xR)$ = $C_n(T)$, $r\in R,S$, $x\in \varphi_n$ and $T \neq R$. This implies, $C_n(R)$ $\cong$ $C_n(S)$ $\cong$ $C_n(T)$, $C_n(R),C_n(S)\in T2_{n,r}(C_n(R))$ = $T2_{n,r}(C_n(S))$ and $C_n(R),C_n(T)\in Ad_n(C_n(R))$ = $T1_n(C_n(R))$ = $T1_n(C_n(T))$ where $(T1_n(C_n(R)), \circ')$ and $(T2_{n,r}(C_n(R)), \circ)$ are Abelian groups on $C_n(R)$, see Theorem \ref{a20}.
	
Suppose, $C_n(S)$ and $C_n(T)$ be Adam's isomorphic, $S \neq T$. This implies, $C_n(S),C_n(T)\in T1_n(C_n(S))$ = $T1_n(C_n(T))$, $S \neq T$. And already $C_n(R),C_n(T)\in T1_n(C_n(R))$ = $T1_n(C_n(T))$, $R \neq T$. This implies, $C_n(R),C_n(S),C_n(T)\in T1_n(C_n(R))$ = $T1_n(C_n(S))$ = $T1_n(C_n(T))$, $S \neq T$, $R \neq T$ and $R \neq S$ since $C_n(R)$ and $C_n(S)$ are Type-2 isomorphic w.r.t. $r$. This implies, $C_n(R)$ and $C_n(S)$ are Adam's isomorphic since $(T1_n(C_n(R)), \circ')$ is an Abelian groups on $C_n(R)$, $R \neq S$. This is a contradiction to the given condition that $C_n(R)$ and $C_n(S)$ are Type-2 isomorphic w.r.t. $r$, $r\in R,S$. Hence $C_n(S)$ and $C_n(T)$ are isomorphic but they are not Adam's isomorphic, $S \neq T$. 
	
On the otherhand, suppose $C_n(S)$ and $C_n(T)$ be Type-2 isomorphic w.r.t. $r'$, $r'\in S,T$. This implies that $C_n(S),C_n(T)\in T2_{n,r'}(C_n(S))$ = $T2_{n,r'}(C_n(T))$ using Theorem \ref{a20}, $S \neq T$. By the given condition $C_n(R)$ and $C_n(S)$ are Type-2 isomorphic w.r.t. $r$, $r\in R,S$. This implies, $C_n(R),C_n(S)\in T2_{n,r'}(C_n(R))$ = $T2_{n,r'}(C_n(S))$. Combining the above two statements, we get, $C_n(R),C_n(S),C_n(T)\in T2_{n,r'}(C_n(R))$ = $T2_{n,r'}(C_n(S))$ = $T2_{n,r'}(C_n(T))$, $R \neq S$, $S \neq T$ and $R \neq T$. This implies that $C_n(R)$ and $C_n(T)$ are Type-2 isomorphic w.r.t. $r'$ since $(T2_{n,r'}(C_n(R)), \circ)$ is an Abelian group, $r'\in S,T$. This is a contradiction to the given condition that $C_n(R)$ and $C_n(T)$ are Adam's isomorphic. This implies that our assumption is wrong. This implies, $C_n(S)$ and $C_n(T)$ are isomorphic but they are not Type-2 isomorphic w.r.t. $r'$, $r'\in S,T$. 

Hence we get the result.  
\end{proof}

We propose the following open problems for future study.

\begin{oprm}\quad {\rm \label{op4} Given a circulant graph $C_n(R)$, find all $C_n(S)$ such that $C_n(R)$ $\cong$ $C_n(S)$ and find the sequence of type of isomorphisms involved among $C_n(R)$ and $C_n(S)$ for each $S$. \hfill $\Box$} 
\end{oprm}

\begin{oprm}\quad {\rm \label{op5} Find circulant graphs $C_n(R)$ for which  

(i) $T2_{n,r}(C_{n}(R))$ = $V_{n,r}(C_{n}(R))$;  

(ii)  $T2_{n,r}(C_{n}(R))$ $\neq$ $V_{n,r}(C_{n}(R))$. $i.e.$,  $T2_{n,r}(C_{n}(R)) \subset V_{n,r}(C_{n}(R))$. \hfill $\Box$}
\end{oprm}

\section{Results on isomorphic circulant graphs of Type-2 w.r.t. $r$ = 3,5,7 }  

  In Section 5, we presented results on Type-2 isomorphic circulant graphs of order $n$ w.r.t. $r$ = 2. In this section, we present important results obtained in \cite{vw1}-\cite{vw3} on families of Type-2 isomorphic circulant graphs of order $n$ w.r.t. $r$ = 3,5,7. One can find proof of Theorems \ref{c41}, \ref{c43} and \ref{c45} in \cite{vw1}-\cite{vw3}. Also, these isomorphic circulant graphs are particular forms of isomorphic circulant graphs of order $np^3$ and of Type-2 w.r.t. $r$ = $p$ that are presented in the next section where $p$ is a prime number and $n\in\mathbb{N}$.
 
 \begin{theorem} \cite{vw1} \label{c41} {\rm For $R$ = $\{1, 3, 9n-1, 9n+1\}$, $S$ = $\{3, 3n+1, 6n-1, 12n+1\}$, $T$ = $\{3, 3n-1, 6n+1$, $12n-1\}$ and $n\in\mathbb{N}$, $\theta_{27n,3,n}(C_{27n}(R))$ = $C_{27n}(S)$, $\theta_{27n,3,n}(C_{27n}(S))$ = $C_{27n}(T)$, $\theta_{27n,3,n}(C_{27n}(T))$ = $C_{27n}(R)$ and $C_{27n}(R)$, $C_{27n}(S)$ and $C_{27n}(T)$ are Type-2 isomorphic circulant graphs w.r.t. $r$ = 3. \hfill $\Box$}
 \end{theorem}

 \begin{theorem} \label{c42} {\rm Let $k \geq 3$, $R$ = $\{1, 9n-1, 9n+1, 3p_1, 3p_2, \ldots, 3p_{k-2}\}$, $S$ = $\{3n+1, 6n-1, 12n+1,$ $3p_1, 3p_2, \ldots, 3p_{k-2}\}$, $T$ = $\{3n-1, 6n+1, 12n-1, 3p_1, 3p_2, \ldots, 3p_{k-2}\}$, $\gcd(p_1,p_2,...,p_{k-2}) = 1$ and $k,n,p_1,p_2,\ldots,p_{k-2}\in\mathbb{N}$. Then, $(i)$  $\theta_{27n,3,n}(C_{27n}(R))$ = $C_{27n}(S)$, $\theta_{27n,3,n}(C_{27n}(S))$ = $C_{27n}(T)$ and $\theta_{27n,3,n}(C_{27n}(T))$ = $C_{27n}(R)$ and $(ii)$ for a given set of values of $k,p_1,p_2,...,p_{k-2}$ and $n$, $C_{27n}(R)$, $C_{27n}(S)$ and $C_{27n}(T)$ are either all Type-2 isomorphic w.r.t. $m$ = $3$ or all Adam's isomorphic.}
\end{theorem}
\begin{proof} The result follows from Theorem \ref{c41}, Remark \ref{r12} and definition of Type-2 isomorphism. 
\end{proof}

 \begin{theorem} \cite{vw2} \label{c43} {\rm For $R_i$ = $\{5, d_i, 25n-d_i, 25n+d_i, 50n-d_i, 50n+d_i\}$, $d_i$ = $5n(i-1)+1$, $i,j$ = 1 to 5 and $n\in\mathbb{N}$, $\theta_{125n,5,jn}(C_{125n}(R_i))$ = $C_{125n}(R_{i+j})$ where $i+j$ in $R_{i+j}$ is calculated under addition modulo 5 and $C_{125n}(R_i)$ are Type-2 isomorphic circulant graphs w.r.t. $r$ = 5.  		\hfill $\Box$}
\end{theorem}

\begin{theorem} \label{c44} {\rm Let $k \geq 3$, $d_i$ = $5n(i-1)+1$, $1 \leq i \leq 5$, $R_i$ = $\{d_i, 25n-d_i, 25n+d_i, 50n-d_i$, $50n+d_i, 5p_1, 5p_2, . . . , 5p_{k-2}\}$,  $k,n,p_1,p_2,...,p_{k-2}\in\mathbb{N}$ and $\gcd(p_1,p_2,...,p_{k-2}) = 1$. Then, for a given set of values of $k,p_1,p_2,...,p_{k-2}$ and $n$, circulant graphs $C_{125n}(R_i)$ are either all Type-2 isomorphic w.r.t. $m$ = $5$ or all Adam's isomorphic, $1 \leq i \leq 5$.}
\end{theorem}
\begin{proof} The result follows from Theorem \ref{c43}, Remark \ref{r12} and definition of Type-2 isomorphism. 
\end{proof}

\begin{theorem} \cite{vw3} \label{c45} {\rm For $R_i$ = $\{7, d_i, 49n-d_i, 49n+d_i, 98n-d_i, 98n+d_i, 147n-d_i, 147n+d_i\}$, $d_i$ = $7n(i-1)+1$, $i,j$ = 1 to 7 and $n\in\mathbb{N}$, $\theta_{343n,7,jn}(C_{343n}(R_i))$ = $C_{343n}(R_{i+j})$ where $i+j$ is calculated under addition modulo 7 and $C_{343n}(R_i)$ are Type-2 isomorphic circulant graphs w.r.t. $r$ = 7. 	\hfill $\Box$}
\end{theorem}

\begin{theorem} \label{c46} {\rm Let $k \geq 3$, $d_i$ = $7n(i-1)+1$, $1 \leq i \leq 7$, $R_i$ = $\{d_i, 49n-d_i, 49n+d_i, 98n-d_i$, $98n+d_i$, $147n-d_i,$ $147n+d_i,$ $7p_1, 7p_2, . . . , 7p_{k-2}\}$, $k,n,p_1,p_2,...,p_{k-2}\in\mathbb{N}$ and $\gcd(p_1,p_2,...,p_{k-2}) = 1$. Then for a given set of values of $k,p_1,p_2,...,p_{k-2}$ and $n$, circulant graphs $C_{343n}(R_i)$ are either all Type-2 isomorphic w.r.t. $m$ = $7$ or all Adam's isomorphic, $1 \leq i \leq 7$.}
\end{theorem}
\begin{proof} The result follows from Theorem \ref{c45}, Remark \ref{r12} and definition of Type-2 isomorphism. 
\end{proof}

\section{On Type-2 isomorphic circulant graphs of order $np^3$ w.r.t. $r$ = $p$}  

Based on our studies in \cite{v20}-\cite{vw3} on Type-2 isomorphic circulant graphs $C_n(R)$ w.r.t. $r$ = 2,3,5,7 that are presented in Sections 5 and 9, we developed computer programs that produced more families of Type-2 isomorphic circulant graphs $C_n(R)$ for different values of $n$, $n\in\mathbb{N}$. From these, we obtain families of Type-2 isomorphic circulant graphs $C_{np^3}(S)$ w.r.t. $r$ = $p$,  Abelian groups related to these isomorphic graphs, develope its theory and present it in this section where $p$ is a prime number and $n\in\mathbb{N}$. Type-2 isomorphic circulant graphs don't have the CI-property.  We use Remark \ref{r11} to establish Type-2 isomorphism w.r.t.  $m$ among isomorphic circulant graphs $C_n(R)$ and $C_n(S)$. 

\begin{theorem} \label{c1} {\rm Let $p$ be an odd prime number, $1 \leq x \leq p-1$, $1 \leq i \leq p$, $y\in\mathbb{N}_0$, $0 \leq y \leq np-1$, $1 \leq x+yp \leq np^2-1$, $d^{np^3, x+yp}_i = (i-1)xpn+x+yp$,  $R^{np^3, x+yp}_i$ $=$ $\{p$, $d^{np^3, x+yp}_i$, $np^2-d^{np^3, x+yp}_i$, $np^2+d^{np^3, x+yp}_i$, $2np^2-d^{np^3, x+yp}_i$, $2np^2+$ $d^{np^3, x+yp}_i,$ $3np^2-d^{np^3, x+yp}_i$, $3np^2+d^{np^3, x+yp}_i,$ . . . , $(p-1)np^2$ - $d^{np^3, x+yp}_i$, $(p-1)np^2+d^{np^3, x+yp}_i,$ $np^3-d^{np^3, x+yp}_i,$ $np^3-p\}$ and $i,j,n,x\in\mathbb{N}$. Then, for a given set of values of $n$, $p$, $x$ and $y$, $\Theta_{np^3,p,jn} (C_{np^3}(R^{np^3, x+yp}_i))$ = $C_{np^3}(R^{np^3, x+yp}_{i+j})$ and the $p$ circulant graphs $C_{np^3}(R^{np^3, x+yp}_i)$ are isomorphic of Type-2 w.r.t.  $p$, $1 \leq i,j \leq p$ where $i+j$ in $R^{np^3, x+yp}_{i+j}$ is calculated under addition modulo $p$.}
\end{theorem}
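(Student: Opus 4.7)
The plan is to establish the theorem in two halves: first to verify that $\Theta_{np^3,p,jn}(C_{np^3}(R_i))=C_{np^3}(R_{i+j\bmod p})$, and then to rule out any Adam's isomorphism between distinct $R_i$'s. Write $N:=np^3$ and $R_i:=R^{np^3,x+yp}_i$. The set $R_i$ decomposes as $\{p,\,N-p\}\cup\{knp^2\pm d_i\bmod N:0\le k\le p-1\}$ (hence $|R_i|=2p+2$) and is symmetric under $s\mapsto N-s$. Since $d_i=(i-1)xnp+x+yp\equiv x\pmod p$ with $1\le x\le p-1$, the only elements of $R_i$ divisible by $p$ are $p$ and $N-p$, of $p$-adic valuation exactly one; the remaining $2p$ elements are coprime to $p$. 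This $p$-adic observation drives both halves of the argument.

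For the first half, writing $\Theta:=\Theta_{N,p,jn}$, Theorem~\ref{a12} reduces the verification to checking that the jumps at $u_0=\Theta(v_0)$ in $\Theta(C_N(R_i))$ form the set $R_{i+j\bmod p}$. By the definition of $\Theta$, $v_s\mapsto u_{s+(s\bmod p)jnp}$, so this amounts to showing $\{s+(s\bmod p)\,jnp\bmod N:s\in R_i\}=R_{i+j\bmod p}$. For $s\in\{p,N-p\}$ the shift vanishes. For $s=d_i$, $d_i+xjnp=(i+j-1)xnp+x+yp$; writing $i+j=ap+b$ with $b\in\{1,\ldots,p\}$ and $a\in\{0,1\}$, this equals $(ax)np^2+d_b\pmod N$, an element of $R_b$. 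For $s=knp^2+d_i$ the same shift gives $(k+ax)np^2+d_b$, still in $R_b$; for $s=knp^2-d_i$ the shift is $jnp^2-xjnp$ and a direct computation yields $(k+j-ax)np^2-d_b$, again in $R_b$. In each of the three non-trivial types, as $k$ ranges over $\{0,\ldots,p-1\}$ the coefficient of $np^2$ in the image ranges bijectively over $\{0,\ldots,p-1\}$, so the induced map $R_i\to R_b$ is a bijection. This gives the image set equal to $R_b=R_{i+j\bmod p}$ and establishes $\Theta(C_N(R_i))=C_N(R_{i+j\bmod p})$.

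For the second half, suppose for contradiction that $aR_i=R_{i'}$ as symmetric subsets of $\mathbb{Z}_N$, for some $a\in\varphi_N$ and some $i'\not\equiv i\pmod p$. Since $\gcd(a,p)=1$, multiplication by $a$ preserves $p$-adic valuations and therefore permutes $\{p,N-p\}$, forcing $a\equiv\varepsilon\pmod{np^2}$ with $\varepsilon\in\{\pm1\}$. Writing $a=\varepsilon+knp^2$, one has $ad_i\equiv\varepsilon d_i+kxnp^2\pmod N$, and since $ad_i$ is coprime to $p$ this element must equal $\ell np^2+\eta d_{i'}\pmod N$ for some $\eta\in\{\pm1\}$ and $\ell\in\{0,\ldots,p-1\}$. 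The two sign-combinations with $\varepsilon\eta=-1$ reduce modulo $p$ to the impossible congruence $2x\equiv0\pmod p$; the two with $\varepsilon\eta=1$ reduce to $\pm(i-i')xnp\equiv(\ell-kx)np^2\pmod N$, from which dividing by $np$ and reading mod $p$ forces $(i-i')x\equiv0\pmod p$, contradicting $i\not\equiv i'\pmod p$. Hence no such $a$ exists, the $p$ graphs are pairwise non-Adam's isomorphic, and combined with the first half they are Type-2 isomorphic with respect to $p$.

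The main obstacle is the second half: the argument requires first using the $p$-valuation-one elements to pin $a$ modulo $np^2$, then splitting into four sign-combinations, and in the non-immediate cases passing from a congruence modulo $p$ up to one modulo $p^2$ in order to extract the contradiction---a naive mod-$p$ reduction only handles two of the four cases. The first half, while longer to write out, is mechanical once the three non-trivial element-types of $R_i$ have been tabulated and the wrap-around in $i+j\bmod p$ has been packaged into the single parameter $a\in\{0,1\}$.
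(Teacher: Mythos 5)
Your proposal is correct, and its overall skeleton matches the paper's: a direct term-by-term computation showing that $\Theta_{np^3,p,jn}$ carries the jump set $R^{np^3,x+yp}_i$ onto $R^{np^3,x+yp}_{i+j}$, followed by a constraint on any would-be Adam's multiplier extracted from the two jumps divisible by $p$. The first half is essentially identical in substance to the paper's computation of $\Theta_{np^3,p,in}(R^{np^3,x+yp}_1)$; your parameter $a\in\{0,1\}$ merely packages the wrap-around of $i+j$ past $p$. The genuine difference lies in how the non-Adam's half is finished. The paper, after noting that $sp$ must land on $p$ or $np^3-p$ and hence $s\equiv\pm1\pmod{np^2}$, explicitly multiplies out $sR^{np^3,x+yp}_1$ for every candidate $s=knp^2\pm1$ (its Cases 1 and 2) and verifies $sR^{np^3,x+yp}_1=R^{np^3,x+yp}_1$, so that no Adam's isomorphism can reach $R^{np^3,x+yp}_i$ with $i\geq 2$; it then appeals to a ``similarly'' for the remaining pairs. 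You instead track the single element $a\,d_i$: writing $a=\varepsilon+knp^2$ gives $a\,d_i\equiv\varepsilon d_i+kxnp^2\pmod{np^3}$, and comparing with the generic form $\ell np^2+\eta d_{i'}$ of a $p$-coprime element of $R^{np^3,x+yp}_{i'}$ yields either $2x\equiv 0\pmod p$ or, after dividing by $np$ and reducing modulo $p$, $(i-i')x\equiv 0\pmod p$, both impossible. Your congruence finish is shorter, treats all pairs $(i,i')$ symmetrically at once, and avoids the lengthy expansions; the paper's computation buys slightly more, namely the explicit fact that every unit $\equiv\pm1\pmod{np^2}$ stabilizes $R^{np^3,x+yp}_1$, i.e., it identifies the full Adam's class rather than only its disjointness from the other $R^{np^3,x+yp}_{i'}$. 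Both arguments rest on the same key observation that $p$ and $np^3-p$ are the only jumps of positive $p$-adic valuation, so I would classify your proof as the paper's approach with a streamlined, and in my view tighter, conclusion to the second half.
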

\begin{proof} We use Remark \ref{r11} to establish Type-2 isomorphism w.r.t.  $r$ between circulant graphs $C_n(R)$ and $C_n(S)$. 

At first, let us prove $\Theta_{np^3,p,jn} (R^{np^3, x+yp}_i)$ = $R^{np^3, x+yp}_{i+j}$ for $1 \leq i,j \leq p$, $1 \leq x \leq p-1$, $y\in\mathbb{N}_0$, $n\in\mathbb{N}$, $0 \leq y \leq np-1$ and $1 \leq x+yp \leq np^2-1$. 

For a given set of values of $n$, $x$ and $y$, we start with proving the above result for $i$ = 1 and 2. 

When $i$ = $1,$ $1 \leq x \leq p-1$, $0 \leq y \leq np-1$, $y\in\mathbb{N}_0$, $n\in\mathbb{N}$ and $1 \leq x+yp \leq np^2-1$,

$d^{np^3, x+yp}_1 = x+yp$ and 

$R^{np^3, x+yp}_1$ = $\{p, x+yp, np^2-x-yp, np^2+x+yp, 2np^2-x-yp,$ 

\hspace{2.5cm} $2np^2+x+yp, 3np^2-x-yp,$ $3np^2+x+yp,$ $\ldots,$  

~\hfill $(p-1)np^2-x-yp,$ $(p-1)np^2+x+yp,$ $np^3-x-yp,$ $np^3-p\}$.

When $i$ = $2,$ $1 \leq x \leq p-1$, $0 \leq y \leq np-1$, $y\in\mathbb{N}_0$, $n\in\mathbb{N}$ and $1 \leq x+yp \leq np^2-1$, 

$d^{np^3, x+yp}_2 = xpn+x+yp$ and 

$R^{np^3, x+yp}_2 = \{p, xpn+x+yp, np^2-(xpn+x+yp), np^2+xpn+x+yp,$  

\hspace{1.5cm} $2np^2-(xpn+x+yp),$ $2np^2+xpn+x+yp,$ $3np^2-(xpn+x+yp),$  

\hspace{1.5cm} $3np^2+xpn+x+yp,$ $\ldots,$ $(p-1)np^2-(xpn+x+yp),$ 

~\hfill $(p-1)np^2+xpn+x+yp,$ $np^3-(xpn+x+yp),$ $np^3-p\}.$ 

For $1 \leq i,j \leq p$, $1 \leq x \leq p-1$, $0 \leq y \leq np-1$, $1 \leq x+yp \leq np^2-1$, $y\in\mathbb{N}_0$ and $n\in\mathbb{N}$, using the definition of $\Theta_{n,r,t},$ we get, 

$\Theta_{np^3,p,n}(R^{np^3, x+yp}_1)$ = $\Theta_{np^3,p,n}(\{p, np^3-p \})$  

\hfill $\bigcup \Theta_{np^3,p,n}(\{x+yp, np^2+x+yp, 2np^2+x+yp, 3np^2+x+yp, \ldots, (p-1)np^2+x+yp \})$ 

\hfill $ \bigcup \Theta_{np^3,p,n}(\{np^2-x-yp, 2np^2-x-yp, 3np^2-x-yp, \ldots, (p-1)np^2-x-yp,  np^3-x-yp \})$

= $\{p, np^3-p \}$ 

\hfill $ \bigcup (xpn+ \{x+yp, np^2+x+yp, 2np^2+x+yp, 3np^2+x+yp, \ldots, (p-1)np^2+x+yp \})$ 

\hfill $\bigcup ((p-x)pn+ \{np^2-x-yp, 2np^2-x-yp, 3np^2-x-yp,$ 

\hfill $\ldots, (p-1)np^2-x-yp, np^3-x-yp \})$ 

 = $\{p, np^3-p, xpn+x+yp, np^2+xpn+x+yp, 2np^2+xpn+x+yp, 3np^2+xpn+x+yp, \ldots,$  

\hfill $  (p-1)np^2+xpn+x+yp, 2np^2-(xpn+x+yp), 3np^2-(xpn+x+yp),  \ldots,$ 

\hfill $  np^3-(xpn+x+yp), np^2-(xpn+x+yp)\} = R^{np^3,x+yp}_2;$  

 $\Theta_{np^3,p,in}(R^{np^3, x+yp}_1)$ = $\Theta_{np^3,p,in}(\{p, np^3-p \})$ $\bigcup \Theta_{np^3,p,in}(\{x+yp, np^2+x+yp, 2np^2+x+yp,$ 

\hfill $3np^2+x+yp, \ldots, (p-1)np^2+x+yp \}) \bigcup \Theta_{np^3,p,in}(\{np^2-x-yp, 2np^2-x-yp,$ 

\hfill $3np^2-x-yp, \ldots, (p-1)np^2-x-yp,  np^3-x-yp \})$

= $\{p, np^3-p \} \bigcup (ixpn+ \{x+yp, np^2+x+yp, 2np^2+x+yp, 3np^2+x+yp, \ldots,$ 

\hfill $ (p-1)np^2+x+yp \}) \bigcup ~(i(p-x)pn+ \{np^2-x-yp, 2np^2-x-yp, 3np^2-x-yp, \ldots, $ 

~\hfill $(p-1)np^2-x-yp, np^3-x-yp \})$ 

 = $\{p, np^3-p, ixpn+x+yp, np^2+ixpn+x+yp, 2np^2+ixpn+x+yp, 3np^2+ixpn+x+yp, \ldots,$ 

~ \hfill $(p-1)np^2+ixpn+x+yp, (1+i)np^2-(ixpn+x+yp),$ $(2+i)np^2-(ixpn+x+yp),$  

~\hfill  $(3+i)np^2-(ixpn+x+yp), \ldots,$ $(p-1+i)np^2-(ixpn+x+yp),$

~\hfill $(p+i)np^2-(ixpn+x+yp)\} = R^{np^3,x+yp}_{i+1}$  \\
since $\{1+i, 2+i, 3+i, . . . , (p-1)+i, p+i\}$ = $\{1, 2, . . . , p-1, p = 0\}$ under addition modulo $p$ and $d^{np^3,x+yp}_{i+1} = ixpn+x+yp$, $1 \leq i \leq p$. Thus, the above result is true for $i$ = 1.

In a similar way, we can prove the above result for $i$ = 2 and also for the general case that $\Theta_{np^3,p,jn}(R^{np^3,x+yp}_i)$ = $R^{np^3,x+yp}_{i+j}$ where $i+j$ in $R^{np^3,x+yp}_{i+j}$ is calculated under addition modulo $p$, $1 \leq x \leq p-1$, $0 \leq y \leq np-1$, $1 \leq x+yp \leq np^2-1$, $y\in\mathbb{N}_0$, $n\in\mathbb{N}$ and $1 \leq i,j \leq p$. And thereby, we get, $\Theta_{np^3,p,jn}(C_{np^3}(R^{np^3,x+yp}_i))$ = $C_{np^3}(R^{np^3,x+yp}_{i+j})$ where $i+j$ in $R^{np^3,x+yp}_{i+j}$ is calculated under addition modulo $p$, $1 \leq x \leq p-1$, $0 \leq y \leq np-1$, $1 \leq x+yp \leq np^2-1$, $y\in\mathbb{N}_0$, $n\in\mathbb{N}$ and $1 \leq i,j \leq p$. 

From the definition of $\Theta_{n,r,t}$ acting on $C_{n}(R)$, we get, for a given set of values of $n$, $x$ and $y$, the circulant graphs $C_{np^3}(R^{np^3, x+yp}_i)$ are isomorphic, $1 \leq x \leq p-1$, $0 \leq y \leq np-1$, $1 \leq x+yp \leq np^2-1$, $y\in\mathbb{N}_0$, $n\in\mathbb{N}$ and $1 \leq i,j \leq p$. To complete the proof, we have to establish their Type-2 isomorphism. 

\vspace{.2cm}
\noindent
{\bf {\it Claim.}} For a given set of values of $n$, $x$ and $y$, $C_{np^3}(R^{np^3, x+yp}_i)$ are Type-2 isomorphic w.r.t.  $p$, $1 \leq i \leq p$, $1 \leq x \leq p-1$, $0 \leq y \leq np-1$, $y\in\mathbb{N}_0$, $n\in\mathbb{N}$ and $1 \leq x+yp \leq np^2-1$.  

 For $1 \leq i,j\leq p$, $1 \leq x \leq p-1$, $1 \leq x+yp \leq np^2-1$, $0 \leq y \leq np-1$, $y\in\mathbb{N}_0$ and $n\in\mathbb{N}$ and given $n$, $x$ and $y$, $d^{np^3,x+yp}_i = (i-1)xpn+x+yp\in$ $R^{np^3,x+yp}_i$. And $d^{np^3,x+yp}_i$ = $d^{np^3,x+yp}_j$ if and only if $(i-1)xpn+x+yp$ = $(j-1)xpn+x+yp$ if and only if $i$  = $j$ if and only if $R^{np^3, x+yp}_i$ = $R^{np^3, x+yp}_j$. Thus, for given $n$, $x$ and $y$ and different $i,$ all the $p$ sets $R^{np^3, x+yp}_i$ are different and thereby all the $p$ circulant graphs $C_{np^3}(R^{np^3, x+yp}_i)$ are also distinct, $1 \leq i,j\leq p$, $1 \leq x \leq p-1$, $0 \leq y \leq np-1$, $y\in\mathbb{N}_0$, $n\in\mathbb{N}$ and $1 \leq x+yp \leq np^2-1$. We have already proved that for given $n$, $x$ and $y$, $\Theta_{np^3,p,jn}(C_{np^3}(R^{np^3, x+yp}_i))$ = $C_{np^3}(R^{np^3, x+yp}_{i+j})$ and thereby all the $p$ circulant graphs $C_{np^3}(R^{np^3, x+yp}_i)$ are isomorphic where $i+j$ in $R^{np^3, x+yp}_{i+j}$ is calculated under addition modulo $p$. This implies that all the $p$ circulant graphs $C_{np^3}(R^{np^3, x+yp}_i)$ are distinct but isomorphic, $1 \leq i\leq p$, $1 \leq x \leq p-1$, $0 \leq y \leq np-1$, $1 \leq x+yp \leq np^2-1$, $y\in\mathbb{N}_0$ and $n\in\mathbb{N}$.

To prove their Type-2 isomorphism w.r.t.  $p$, it is enough to prove, each pair of circulant graphs $C_{np^3}(R^{np^3, x+yp}_i)$ and $C_{np^3}(R^{np^3, x+yp}_j)$ for $i \neq j$ are not of Type-1 for a fixed $n$, $x$ and $y$ where $1 \leq i,j\leq p$, $1 \leq x \leq p-1$, $0 \leq y \leq np-1$, $1 \leq x+yp \leq np^2-1$, $y\in\mathbb{N}_0$ and $n\in\mathbb{N}$. Let us start with the circulant graph $C_{np^3}(R^{np^3, x+yp}_1)$. 

\vspace{.2cm}
\noindent
{\bf{\it Sub-claim.}} $C_{np^3}(R^{np^3, x+yp}_1)$ and $C_{np^3}(R^{np^3, x+yp}_i)$ are Type-2 isomorphic w.r.t.  $p$ for given $n$, $x$ and $y$ and $2 \leq i \leq p$.

\vspace{.2cm}
If not, they are of Adam's isomorphic. This implies, there exists $s,q \in \mathbb{N}$ such that $C_{np^3}(sR^{np^3, x+yp}_1)$ = $C_{np^3}(R^{np^3, x+yp}_i)$ where $s$ = $qpn-j,$ $\gcd(np^3, s)$ = $1$, $1 \leq j,x \leq p-1,$ $1 \leq qpn-j \leq np^3-1$, $n\in\mathbb{N}$, $2 \leq i \leq p$, $0 \leq y \leq np-1$, $1 \leq x+yp \leq np^2-1$ and $y\in\mathbb{N}_0$. This also implies, $1 \leq j \leq p-1$ and $\gcd(qn, j)$ = 1. Now, consider the case when $j$ = $1.$ In this case, $s$ = $qpn-1,$ $\gcd(np^3, qpn-1)$ = $1,$ $\gcd(qn, 1)$ = 1, $1 \leq qpn-1 \leq np^3-1,$ $C_{np^3}((qpn-1)R^{np^3,x+yp}_1) \cong C_{np^3}(R^{np^3,x+yp}_i),$ $2 \leq i \leq p$, $1 \leq x \leq p-1$, $0 \leq y \leq np-1$,  $1 \leq x+yp \leq np^2-1$, $y\in\mathbb{N}_0$, $n\in\mathbb{N}$. This implies, $(qpn-1)\{p,$ $x+yp,$ $np^2-x-yp$, $np^2+x+yp,$ $2np^2-x-yp,$ $2np^2+x+yp,$ $3np^2-x-yp,$ $3np^2+x+yp,$ $\ldots,$ $(p-1)np^2-x-yp,$ $(p-1)np^2+x+yp,$ $np^3-x-yp,$ $np^3-p\}$ = $\{p,$ $(i-1)xpn+x+yp,$ $np^2-((i-1)xpn+x+yp),$ $np^2+(i-1)xpn+x+yp,$ $2np^2-((i-1)xpn+x+yp),$ $2np^2+(i-1)xpn+x+yp,$ $3np^2-((i-1)xpn+x+yp),$ $3np^2+(i-1)xpn+x+yp,$ $\ldots,$ $(p-1)np^2-((i-1)xpn+x+yp),$ $(p-1)np^2+(i-1)xpn+x+yp,$ $np^3-((i-1)xpn+x+yp),$ $np^3-p\}$ under arithmetic modulo $np^3,$ $2 \leq i \leq p$, $1 \leq x \leq p-1$, $y\in\mathbb{N}_0$, $n\in\mathbb{N}$, $0 \leq y \leq np-1$, $1 \leq x+yp \leq np^2-1$. This implies, $(qpn-1)p,$ $(qpn-1)(np^3-p),$ $p+p_1np^3$ and $np^3-p+p_2np^3$ are the only numbers, each is a multiple of $p,$ in the two sets for some $p_1,p_2\in \mathbb{N}_0,$ $2 \leq i \leq p$, $1 \leq x \leq p-1$, $1 \leq qpn-1 \leq np^3-1$, $\gcd(np^3, qpn-1)$ = $1$, $0 \leq y \leq np-1$, $1 \leq x+yp \leq np^2-1$, $y\in \mathbb{N}_0$ and $n,q\in\mathbb{N}$. Under this, the following two cases arise.

\vspace{.2cm}
\noindent
{\bf {\it Case 1.}} $(qpn-1)p = p+p_1np^3$, $p_1 \in \mathbb{N}_0,$~ $n,q \in \mathbb{N},$ $1 \leq qpn-1 \leq np^3-1.$ 

In this case, the possible values of $p_1$ are $~0,~1,~2,~\ldots,~p-1~$ since $1 \leq qpn-1 \leq np^3-1$ and $q \in \mathbb{N}.$ When $p_1$ = $0$, $qpn-1$ = $1$ and $\gcd(qn, j)$ = 1; $p_1 = 1$, $qpn-1$ = $np^2+1$ and $\gcd(qn, j)$ = 1; $p_1$ = $2$, $qpn-1$ = $2np^2+1$ and $\gcd(qn, j)$ = 1; $\ldots;$ $p_1$ = $p-1$, $qpn-1$ = $(p-1)np^2+1$ and $\gcd(qn, j)$ = 1. Now, let us calculate $(qpn-1)R^{np^3,x+yp}_1$ for $qpn-1$ = $np^2+1,$ $2np^2+1,$ $\ldots,$ $(p-1)np^2+1$ under arithmetic modulo $np^3.$ 

When $qpn-1$ = $np^2+1,$ for given $n$, $x$ and $y$, $1 \leq x \leq p-1$, $y\in\mathbb{N}_0$, $0 \leq y \leq np-1$, $1 \leq x+yp \leq np^2-1$ and $n,x\in\mathbb{N}$, under arithmetic modulo $np^3,$ 

\vspace{.2cm}
\noindent
$(qpn-1)R^{np^3,x+yp}_1$ = $(np^2+1)R^{np^3,x+yp}_1$ 

\hspace{2.5cm} = $(np^2+1)\{p, x+yp, np^2-x-yp, np^2+x+yp, 2np^2-x-yp, 2np^2+x+yp, $  

 \hfill $3np^2-x-yp, 3np^2+x+yp, \ldots, $ $(p-1)np^2-x-yp, (p-1)np^2+x+yp, np^3-x-yp, np^3-p\}$ 

 = $\{p, xnp^2+x+yp, (p-x+1)np^2-x-yp, (x+1)np^2+x+yp, (p-x+2)np^2-x-yp$, 

\hfill   $(p-x+2)np^2-x-yp, (x+2)np^2+x+yp, (p-x+3)np^2-x-yp, (x+3)np^2+x+yp, \ldots,$ 

~\hfill  $(p-x+p-1)np^2-x-yp, (x-1)np^2+x+yp, (p-x)np^2-x-yp, np^3-p\} = R^{np^3,x+yp}_1.$ 

\vspace{.2cm}

Here, $\{xnp^2+x+yp$, $(x+1)np^2+x+yp,$ $(x+2)np^2+x+yp,$ $(x+3)np^2+x+yp,$ $\ldots,$ $(x-1)np^2+x+yp$ = $(x+(p-1))np^2+x+yp\}$ =  $\{x+yp,$ $np^2+x+yp,$ $2np^2+x+yp,$ $3np^2+x+yp,$ $\ldots,$ $(p-1)np^2+x+yp\}$ and $\{(p-x+1)np^2-x-yp,$ $(p-x+2)np^2-x-yp,$ $(p-x+3)np^2-x-yp,$ . . . , $(p-x+p-1)np^2-x-yp)$, $(p-x+p)np^2-x-yp)\}$ = $\{np^2-x-yp$, $2np^2-x-yp,$ $3np^2-x-yp,$ $\ldots,$ $(p-1)np^2-x-yp,$ $np^3-x-yp\}$ under arithmetic modulo $np^3$, $1 \leq x \leq p-1$, $y\in\mathbb{N}_0$, $1 \leq x+yp \leq np^2-1$, $0 \leq y \leq np-1$ and $n,x\in\mathbb{N}$.

Similarly, we can prove that $(qpn-1)R^{np^3,x+yp}_1$ = $R^{np^3,x+yp}_1$ when $qpn-1$ = $2np^2+1,$ $3np^2+1,$ $\ldots,$ $(p-1)np^2+1$ under arithmetic modulo $np^3.$ This implies, $C_{np^3}((qpn-1)R^{np^3,x+yp}_1)$ = $C_{np^3}(R^{np^3,x+yp}_1)$ and $\neq$ $C_{np^3}(R^{np^3,x+yp}_i)$ for $2 \leq i \leq p-1$, $qpn-1$ = $np^2+1,$ $2np^2+1,$ $\ldots,$ $(p-1)np^2+1$, $1 \leq x \leq p-1$, $y\in\mathbb{N}_0$, $1 \leq x+yp \leq np^2-1$, $0 \leq y \leq np-1$ and $n,x\in\mathbb{N}$. 

In a similar way, we can prove that for $2 \leq j \leq p-1$ and $\gcd(qpn-j, np^3)$ = 1, $(qpn-j)$ $R^{np^3,x+yp}_1$ = $R^{np^3,x+yp}_1$ and $\neq$ $R^{np^3,x+yp}_i$ for $2 \leq i \leq p-1$, $y\in\mathbb{N}_0$, $1 \leq x \leq p-1$, $0 \leq y \leq np-1$, $1 \leq x+yp \leq np^2-1$, $n,x\in\mathbb{N}$ and when $nqp-j$ = $np^2+1,$ $2np^2+1,$ $\ldots,$ $(p-1)np^2+1$. This implies, for $2 \leq i \leq p-1$ and given $n$, $x$ and $y$, $C_{np^3}(R^{np^3, x+yp}_1)$ and $C_{np^3}(R^{np^3, x+yp}_i)$ are not Adam's isomorphic, $1 \leq x \leq p-1$, $1 \leq x+yp \leq np^2-1$, $0 \leq y \leq np-1$, $y\in\mathbb{N}_0$ and $n,x\in\mathbb{N}$. Hence the sub-claim is true in this case. 

\vspace{.2cm}
\noindent
{\bf {\it Case 2.}} $(qpn-1)p = np^3-p+p_2np^3$, $p_2 \in \mathbb{N}_0$, $1 \leq qpn-1 \leq np^3-1$ and $n,q \in \mathbb{N}$. 

In this case, the possible values of $p_2$ are $0,~1,~2,~.~.~.~,~p-1$ since ~ $1 \leq qpn-1 \leq np^3-1$ and $n,q \in \mathbb{N}.$ When $p_2$ = $0$, $qpn-1$ = $np^2-1$; $p_2$ = $1$, $qpn-1$ = $2np^2-1$; $\ldots;$ $p_2$ = $p-1$, $qpn-1$ = $np^3-1.$ Now, let us calculate $(qpn-1)R^{np^3,x+yp}_1$ for $qpn-1$ = $np^2-1,$ $2np^2-1,$ $\ldots,$ $np^3-1$ under arithmetic modulo $np^3.$ 

When $nqp-1$ = $np^2-1$, for given $n$, $x$ and $y$, $1 \leq x \leq p-1$, $y\in\mathbb{N}_0$, $0 \leq y \leq np-1$, $1 \leq x+yp \leq np^2-1$ and $n,x\in\mathbb{N}$, under arithmetic modulo $np^3,$ 

\vspace{.2cm}
\noindent
$(qpn-1)R^{np^3,x+yp}_1 = (np^2-1)R^{np^3,x+yp}_1 $ 

~\hfill = $(np^2-1)\{p, x+yp, np^2-x-yp, np^2+x+yp, 2np^2-x-yp,2np^2+x+yp,  3np^2-x-yp, $  

 \hfill $3np^2+x+yp, \ldots, (p-1)np^2-x-yp, (p-1)np^2+x+yp, np^3-x-yp, np^3-p\}$ 

   = $\{np^3-p, xnp^2-x-yp, (p-(x+1))np^2+x+yp,$  $(x-1)np^2-x-yp, (p-(x+2))np^2+x+yp,$ 

\hspace{.5 cm} $(x-2)np^2-x-yp, (p-(x+3))np^2+x+yp,$  $(x-3)np^2-x-yp, \ldots, (p-(x-1))np^2+x+yp,$ 

~\hfill  $(p+x-(p-1))np^2-x-yp, (p-x)np^2+x+yp, p\} = R^{np^3,x+yp}_1.$ 

\vspace{.2cm}
Here, $\{xnp^2-x-yp,$ $(x-1)np^2-x-yp,$ $(x-2)np^2-x-yp,$ $(x-3)np^2-x-yp,$ ..., $(p+x-(p-1))np^2$ $-x-yp\}$ = $\{np^2-x-yp$, $2np^2-x-yp,$ $3np^2-x-yp,$ $\ldots,$ $(p-1)np^2-x-yp,$ $np^3-x-yp\}$ and $\{(p-(x+1))np^2+x+yp,$ $(p-(x+2))np^2+x+yp,$ $(p-(x+3))np^2+x+yp,$ $\ldots$, $(p-(x-1))np^2+x+yp),$  $(p-x)np^2+x+yp\}$ = $\{x+yp,$ $np^2+x+yp,$ $2np^2+x=yp,$ $3np^2+x+yp,$ $\ldots,$ $(p-1)np^2+x+yp\}$ under arithmetic modulo $np^3$, $1 \leq x \leq p-1$, $y\in\mathbb{N}_0$, $0 \leq y \leq np-1$, $1 \leq x+yp \leq np^2-1$ and $n,x\in\mathbb{N}$.

Similarly, we can prove that $(qpn-1)R^{np^3,x+yp}_1$ = $R^{np^3,x+yp}_1$ when $qpn-1$ = $2np^2-1,$ $3np^2-1,$ $\ldots,$ $np^3-1,$ under arithmetic modulo $np^3.$ This implies that $C_{np^3}((qpn-1)R^{np^3,x+yp}_1)$ = $C_{np^3}(R^{np^3,x+yp}_1)$ when $qpn-1$ = $np^2-1,$ $2np^2-1,$ $\ldots,$ $np^3-1.$ In a similar way, we can prove that $(qpn-j)R^{np^3,x+yp}_1$ = $R^{np^3,x+yp}_1,$ under arithmetic modulo $np^3$, when $qpn-j$ = $np^2-1,$ $2np^2-1,$ $\ldots,$ $np^3-1$ for $2 \leq j \leq p-1$ and $\gcd(qpn-j, np^3)$ = 1. This implies, for every $s = qpn-j$ which is relative prime to $np^3$ and for given $n$, $x$ and $y$, $C_{np^3}(sR^{np^3,x+yp}_1)$ = $C_{np^3}(R^{np^3,x+yp}_1)$ and $\neq$ $C_{np^3}(R^{np^3,x+yp}_i)$ for $2 \leq i \leq p-1$, $1 \leq j,x \leq p-1$,  $y\in\mathbb{N}_0$, $0 \leq y \leq np-1$, $1 \leq x+yp \leq np^2-1$ and for $nqp-j$ = $np^2-1,$ $2np^2-1,$ $\ldots,$ $(p-1)np^2-1$, $1 \leq qpn-j \leq np^3-1$ and $n,q,x\in\mathbb{N}$. Hence the sub-claim is also true in this case. 

This implies, for given $n$, $x$ and $y$ and for $i = 2,3,\ldots,p-1$, $C_{np^3}(R^{np^3,x+yp}_1)$ and $C_{np^3}(R^{np^3,x+yp}_i)$ can not be Type-1 isomorphic and so they are Type-2 isomorphic w.r.t.  $r$ = $p$ since $\Theta_{np^3,p,jn} (C_{np^3}(R^{np^3, x+yp}_i))$ = $C_{np^3}(R^{np^3, x+yp}_{i+j})$ for $1 \leq i,j \leq p$, $1 \leq x \leq p-1$,  $y\in\mathbb{N}_0$, $n,x\in\mathbb{N}$, $0 \leq y \leq np-1$, $1 \leq x+yp \leq np^2-1$ and $i+j$ in $R^{np^3, x+yp}_{i+j}$ is calculated under arithmetic modulo p. Hence the claim is true.

Similarly, we can show that for given $n$, $x$ and $y$, $1 \leq i,j \leq p$ and $i \neq$ $j$, $C_{np^3}(R^{np^3,x+yp}_i)$ and $C_{np^3}(R^{np^3,x+yp}_j)$ can not be Adam's isomorphic by proving $C_{np^3}((qpn-t)R^{np^3,x+yp}_i)$ = $C_{np^3}(R^{np^3,x+yp}_i)$ when $qpn-t$ = $np^2-1,$ $2np^2-1,$ $\ldots,$ $np^3-1$ as well as $qpn-t$ = $np^2+1,$ $2np^2+1,$ $\ldots,$ $(p-1)np^2+1$ where $\gcd(np^3, qpn-t)$ = 1, $1 \leq qpn-t \leq np^3-1$, $1 \leq t,x \leq p-1$, $0 \leq y \leq np-1$, $1 \leq x+yp \leq np^2-1$, $y\in\mathbb{N}_0$ and $n,q,t,x\in\mathbb{N}$.   

This implies, for given $n$, $x$ and $y$, all the $p$ isomorphic circulant graphs $C_{np^3}(R^{np^3,x+yp}_i)$ are Type-2 isomorphic w.r.t.  $r = p$ for $i$ = 1 to $p$, $y\in\mathbb{N}_0$, $1 \leq x \leq p-1$, $0 \leq y \leq np-1$, $1 \leq x+yp \leq np^2-1$ and $n,x\in\mathbb{N}$. Hence, we get the result.  
\end{proof}

\begin{theorem} \label{c2} {\rm Let $p$ be an odd prime number, $k \geq 3$, $1 \leq i,j \leq p$, $1 \leq x \leq p-1$, $y\in\mathbb{N}_0$, $0 \leq y \leq np - 1$, $1 \leq x+yp \leq np^2-1$, $d^{np^3, x+yp}_i$ = $(i-1)xpn+$ $x+yp$, $R^{np^3, x+yp}_i$ = $\{d^{np^3, x+yp}_i,$ $np^2-d^{np^3, x+yp}_i,$ $np^2+d^{np^3, x+yp}_i,$ $2np^2-$ $d^{np^3, x+yp}_i,$ $2np^2+d^{np^3, x+yp}_i,$ $3np^2-d^{np^3, x+yp}_i,$ $3np^2+d^{np^3, x+yp}_i,$ ..., $(p-1)np^2-d^{np^3, x+yp}_i,$ $(p-1)np^2+d^{np^3, x+yp}_i,$ $np^3-d^{np^3, x+yp}_i,$ $pp_1,$ $pp_2,$ $\ldots$, $pp_{k-2},$ $p(np^3-p_{k-2})$, $p(np^3-p_{k-3})$, . . . , $p(np^3-p_1)\}$, $\gcd(p_1,p_2,...,p_{k-2}) = 1$ and $i,j,k,n,x,p_1,p_2,...,p_{k-2} \in \mathbb{N}.$  Then, for a given set of values of $k, p, x, y, p_1, p_2, ... , p_{k-3},p_{k-2}$ and $n$ and for $i = 1,2,...,p$, $(i)$ $\Theta_{np^3,p,jn}(C_{np^3}(R^{np^3, x+yp}_i))$ = $C_{np^3}(R^{np^3, x+yp}_{i+j})$ and $(ii)$ the $p$ circulant graphs $C_{np^3}(R^{np^3, x+yp}_i)$ are either all Type-1 or all Type-2 (and without CI-property) w.r.t.  $r$ = $p$ where $i+j$ in $R^{np^3, x+yp}_{i+j}$ is calculated under addition modulo $p$.}
\end{theorem}
\begin{proof}\quad Using Theorem \ref{c1}, for given $n$, $x$ and $y$, the $p$ circulant graphs $C_{np^3}(R^{np^3, x+yp}_i)$ for $i$ = $1$ to $p$ are Type-2 isomorphic w.r.t.  $r = p$ where $p$ is an odd prime, $d^{np^3, x+yp}_i = (i-1)xpn+x+yp$ and $R^{np^3, x+yp}_i$ = $\{p$, $d^{np^3, x+yp}_i$, $np^2-d^{np^3, x+yp}_i$, $np^2+d^{np^3, x+yp}_i$, $2np^2-d^{np^3, x+yp}_i$, $2np^2+$ $d^{np^3, x+yp}_i,$ $3np^2-d^{np^3, x+yp}_i$, $3np^2+d^{np^3, x+yp}_i,$ . . . , $(p-1)np^2-d^{np^3, x+yp}_i$, $(p-1)np^2+d^{np^3, x+yp}_i,$ $np^3-d^{np^3, x+yp}_i,$ $np^3-p\}$. Then the result follows from Remark \ref{r12}, definition of Type-2 isomorphic circulant graphs and the property that Type-2 isomorphic circulant graphs have the property that they are graphs without $CI$-property. 
\end{proof}

Our next theorem is on $T2_{np^3,p}(C_{np^3}(R))$ and before that, let us consider a related problem. 

\begin{prm} \label{c3} {\rm Show that circulant graphs $C_{81}(R_i)$ are Type-2 isomorphic w.r.t. $r$ = 3 for $i$ = 1,2,3 where $R_1$ = $\{3,7,20,34,47,61,74,78\}$, $R_2$ = $\{3,11,16,38,43,65,70,78\}$ and $R_3$ = $\{2,3,25,29,52,56,78,79\}$. Also, find $T2_{81,3}(C_{81}(R_i))$, $1 \leq i \leq 3$.} 
\end{prm}
\noindent
{\bf Solution.} All the three sets $R_1$, $R_2$ and $R_3$ are symmetric subsets of $\mathbb{Z}_{81}$, each contains 3 and 78 as common elements and $\gcd(81, 3)$ = 3 = $\gcd(81, 78)$ and hence $C_{81}(R_i)$ may be of Type-2 isomorphic w.r.t.  $r$ = 3, $i$ = 1,2,3. Here, 81 = $3\times 3^3$ = $np^3$ which implies, $n = 3$, $p = 3$, $np^2 = 27$ and $m = \gcd(np^3, 3)$ = 3. 

The minimum jump size which is not a multiple of 3 in $R_1$ is 7 which implies that we can take $7 = x+yp$. In this case, $x = 1$, $y = 2$ and so $d^{81,7}_1$ = $x+yp$ = 7, $d^{81,7}_2$ = $npx+x+yp$ = 9+7 = 16 and $d^{81,7}_3$ = $2npx+x+yp$ = 18+7 = 25. Using Theorem \ref{c2}, we get,

\vspace{.1cm}
$R^{81,7}_1$ = $\{3,7,20,34,47,61,74,78\}$ = $R_1$ = $\Theta_{81,3,0}(R^{81,7}_1)$,  

\vspace{.1cm}
$R^{81,7}_2$ = $\{3,11,16,38,43,65,70,78\}$ = $R_2$ = $\Theta_{81,3,1}(R^{81,7}_1)$ and

\vspace{.1cm}  
$R^{81,7}_3$ = $\{2,3,25,29,52,56,78,79\}$ = $R_3$ = $\Theta_{81,3,2}(R^{81,7}_1)$. 

\vspace{.1cm}
$\Rightarrow$ $C_{81}(R_i)$ are isomorphic, $1 \leq i \leq 3$.

Moreover, 2 is the only relative prime to 3 between 1 and 3 and

$2R^{81,7}_1$ = $2 \times \{3,7,20,34,47,61,74,78\}$ = $\{6,14,40,68,13,41,67,75\}$ $\neq$ $R_2, R_3$;

$2R^{81,7}_2$ = $2 \times \{3,11,16,38,43,65,70,78\}$ = $\{6,22,32,76,45,49,59,75\}$ $\neq$ $R_1, R_3$ and 

$2R^{81,7}_3$ = $2 \times \{2,3,25,29,52,56,78,79\}$ = $\{4,6,50,58,23,31,75,77\}$ $\neq$ $R_1, R_2$. 

\vspace{.1cm}
$\Rightarrow$ $C_{81}(R_i)$ are not Adam's isomorphic, $1 \leq i \leq 3$.

\vspace{.1cm}
$\Rightarrow$ $C_{81}(R_i)$ are Type-2 isomorphic w.r.t. $r = 3$, $1 \leq i \leq 3$ and

\vspace{.1cm}
$T2_{81,3}(C_{81}(R_1))$ = $\{C_{81}(R_1), C_{81}(R_2), C_{81}(R_3)\}$ = $T2_{81,3}(C_{81}(R_2))$ = $T2_{81,3}(C_{81}(R_3))$. 

In Figures 27, 28, 29, isomorphic circulant graphs, $\Theta_{81,3,0}(C_{81}(R_1))$ = $C_{81}(R_1)$, $\Theta_{81,3,3}(C_{81}(R_1))$ = $C_{81}(R_2)$ and $\Theta_{81,3,6}(C_{81}(R_1))$ = $C_{81}(R_3)$ of Type-2 w.r.t. $r$ = 3 are given. \hfill $\Box$
\begin{figure}[ht]
	\centerline{\includegraphics[width=6.2in]{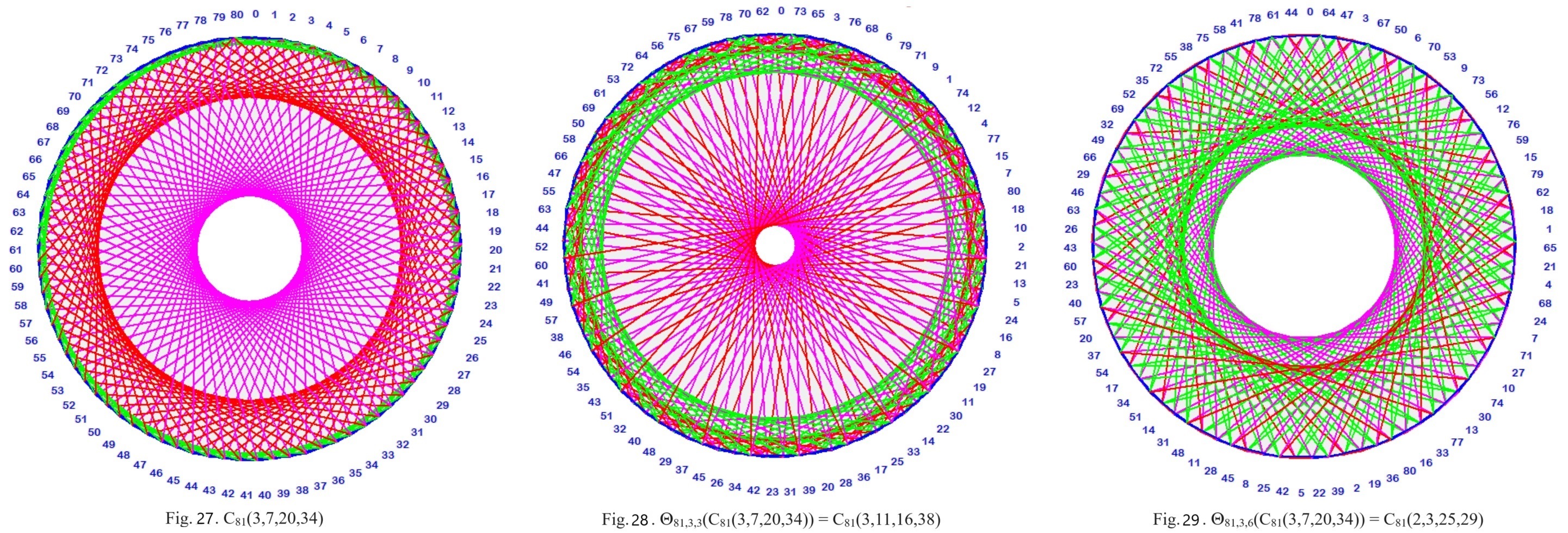}}
\end{figure}
\begin{theorem} \label{c4} {\rm Let $p$ be an odd prime number, $1 \leq i \leq p$, $1 \leq x \leq p-1$, $y\in\mathbb{N}_0$, $0 \leq y \leq np - 1$, $1 \leq x+yp \leq np^2-1$, $d^{np^3, x+yp}_i = (i-1)xpn+$ $x+yp$ and $R^{np^3, x+yp}_i$ = $\{p,$ $d^{np^3,x+yp}_i,$ $np^2-d^{np^3,x+yp}_i,$ $np^2+d^{np^3, x+yp}_i,$ $2np^2-$ $d^{np^3, x+yp}_i,$ $2np^2+d^{np^3, x+yp}_i,$ $3np^2-d^{np^3, x+yp}_i,$ $3np^2+d^{np^3, x+yp}_i,$ . . . , $(p-1)np^2$ - $d^{np^3, x+yp}_i,$ $(p-1)np^2+d^{np^3, x+yp}_i,$ $np^3-d^{np^3, x+yp}_i,$ $np^3-p\}$. Then, for $i$ = 1 to $p$, $T2_{np^3, p}(C_{np^3}(R^{np^3, x+yp}_i))$ = $\{\Theta_{np^3,p,jn}(C_{np^3}(R^{np^3,x+yp}_i)) = C_{np^3}(R^{np^3,x+yp}_{i+j}) : j = 0,1,...,p-1$ where $i+j$ in $C_{np^3}(R^{np^3,x+yp}_{i+j})$ is calculated under addition modulo $p \}$ and $(T2_{np^3, p}(C_{np^3}(R^{np^3, x+yp}_i)), \circ)$ is a Type-2 group of order $p$.}
\end{theorem}
\begin{proof}\quad  Using Theorem \ref{c1}, $\Theta_{np^3,p,jn}(C_{np^3}(R^{np^3,x+yp}_i))$ = $C_{np^3}(R^{np^3,x+yp}_{i+j})$ and for $i$ = 1 to $p$, the $p$ circulant graphs $C_{np^3}(R^{np^3,x+yp}_i)$ are isomorphic of Type-2 w.r.t. $p$ when $p$ is an odd prime number, $1 \leq i \leq p$, $1 \leq x \leq p-1$, $y\in\mathbb{N}_0$, $0 \leq y \leq np - 1$, $1 \leq x+yp \leq np^2-1$, $d^{np^3, x+yp}_i$ = $(i-1)xpn+$ $x+yp$, $R^{np^3, x+yp}_i$ = $\{p,$ $d^{np^3,x+yp}_i,$ $np^2-d^{np^3,x+yp}_i,$ $np^2+d^{np^3, x+yp}_i,$ $2np^2-$ $d^{np^3, x+yp}_i,$ $2np^2+d^{np^3, x+yp}_i,$ $3np^2-d^{np^3, x+yp}_i,$ $3np^2+d^{np^3, x+yp}_i,$ ..., $(p-1)np^2$ $-d^{np^3, x+yp}_i,$ $(p-1)np^2+d^{np^3, x+yp}_i,$ $np^3-d^{np^3, x+yp}_i,$ $np^3-p\}$ and $i+j$ in $R^{np^3, x+yp}_{i+j}$ is calculated under addition modulo $p$.
\end{document}